\let\frak\mathfrak
\let\Bbb\mathbb
\def\>{\relax\ifmmode\mskip.666667\thinmuskip\relax\else\kern.111111em\fi}
\def\<{\relax\ifmmode\mskip-.333333\thinmuskip\relax\else\kern-.0555556em\fi}
\def\vsk#1>{\vskip#1\baselineskip}
\def\vv#1>{\vadjust{\vsk#1>}\ignorespaces}
\def\vvn#1>{\vadjust{\nobreak\vsk#1>\nobreak}\ignorespaces}
\def\vvgood{\vadjust{\penalty-500}}
\let\alb\allowbreak
\def\qedph{\hphantom{\quad\qedsymbol}}
\def\sskip{\par\vskip.2\baselineskip plus .05\baselineskip}
\let\Medskip\medskip
\def\medskip{\par\Medskip}
\let\Bigskip\bigskip
\def\bigskip{\par\Bigskip}
\let\Maketitle\maketitle
\def\maketitle{\hrule height0pt\vskip-\baselineskip
\Maketitle\thispagestyle{empty}\let\maketitle\empty}
\newtheorem{thm}{Theorem}[section]
\newtheorem{cor}[thm]{Corollary}
\newtheorem{lem}[thm]{Lemma}
\newtheorem{prop}[thm]{Proposition}
\numberwithin{equation}{section}
\theoremstyle{definition}
\newtheorem*{rem}{Remark}
\let\mc\mathcal
\let\nc\newcommand
\nc{\on}{\operatorname}
\nc{\Z}{{\mathbb Z}}
\nc{\C}{{\mathbb C}}
\nc{\N}{{\mathbb N}}
\nc{\pone}{{\mathbb C}{\mathbb P}^1}
\nc{\arr}{\rightarrow}
\nc{\larr}{\longrightarrow}
\nc{\al}{\alpha}
\nc{\W}{{\mc W}}
\nc{\la}{\lambda}
\nc{\su}{\widehat{{\mathfrak sl}}_2}
\nc{\g}{{\mathfrak g}}
\nc{\h}{{\mathfrak h}}
\nc{\m}{{\mathfrak m}}
\nc{\n}{{\mathfrak n}}
\nc{\Gm}{\Gamma}
\nc{\La}{\Lambda}
\nc{\gl}{\widehat{\mathfrak{gl}_2}}
\nc{\bi}{\bibitem}
\nc{\om}{\omega}
\nc{\Res}{\on{Res}}
\nc{\gm}{\gamma}
\nc{\Om}{\Omega}
\def\z{\mathfrak z}
\def\ev{\mbox{\sl ev}}
\def\ch{\on{ch}}
\def\End{\on{End\>}}
\def\Res{\on{Res}}
\def\Wr{\on{Wr}}
\def\tbigcap{\mathrel{\textstyle{\bigcap}}}
\def\lba{{\bs\la\>,\bs a}}
\def\B{{\mc B}}
\def\D{{\mc D}}
\def\F{{\mc F}}
\def\M{{\mc M}}
\def\O{{\mc O}}
\def\V{{\mc V}}
\let\dl\delta
\let\Dl\Delta
\let\si\sigma
\let\der\partial
\let\minus\setminus
\let\ge\geqslant
\let\geq\geqslant
\let\leq\leqslant
\nc{\gln}{\mathfrak{gl}_N}
\nc{\sln}{\mathfrak{sl}_N}
\def\gltt{\glt[t]}
\def\beq{\begin{equation}}
\def\eeq{\end{equation}}
\def\be{\begin{equation*}}
\def\ee{\end{equation*}}
\nc{\bean}{\begin{eqnarray}}
\nc{\eean}{\end{eqnarray}}
\nc{\bea}{\begin{eqnarray*}}
\nc{\eea}{\end{eqnarray*}}
\nc{\bs}{\boldsymbol}
\nc{\Ref}[1]{{\rm(\ref{#1})}}
\nc{\glN}{\mathfrak{gl}_N}
\nc{\glNt}{\mathfrak{gl}_N[t]}
\nc{\s}{sing}
\nc{\R}{\Bbb R}
\nc{\Oml}{{\Om_{\bs\la}}}
\nc{\OmLb}{{\Om_{\bs\La,\bs\la,\bs b}}}
\nc{\Ol}{{\mc O_{\bs\la}}}
\nc{\OLb}{{\mc O_{\bs\La,\bs\la,\bs b}}}
\nc{\VSl}{{(\V^S)_{\bs\la}}}
\nc{\Bl}{{\B_{\bs\la}}}
\nc{\Ml}{{\mc M_{\bs\la}}}
\nc{\Mlb}{{\mc M_{\bs\La,\bs\la,\bs b}}}
\nc{\Blb}{{\B_{\bs\La,\bs\la,\bs b}}}
\nc{\Omn}{{\Omega_{\bs n,\bs b,\bs K}}}
\nc{\Omlb}{{\bar\Om_{\bs\la}}}
\nc{\ep}{\epsilon}
\nc{\Dlb}{\Dl_{\bs\La,\bs\la,\bs b,\bs K}}
\nc{\Bb}{{\bf b}}
\nc{\glt}{{\frak{gl}_2}}
\nc{\A}{{\mc A}}
\nc{\slt}{{\frak{sl}_2}}
\nc{\Ma}{{\mc M_{\bs a}}}
\nc{\Mal}{{\mc M_{\bs\la,\bs a}}}
\nc{\Malp}{{\mc M_{\phi,\bs\la,\bs a}}}
\nc{\Vl}{{\V^S_{\bs\la}}}
\nc{\Bal}{{\B_{\bs\la,\bs a}}}
\nc{\Ola}{{\mc O_{\bs\la,\bs a}}}
\nc{\Bv}{{\mc B_{\V^S}}}
\nc{\Bvz}{{\mc B^0_{\V^S}}}
\nc{\sing}{{\rm Sing\,}}
\nc{\Uglt}{U(\glt)}
\nc{\Olo}{{\mc O^0_{\bs\la}}}
\begin{document}

\title[The 
Bethe algebra associated with a nilpotent element]
{The $\frak{gl}_2$
Bethe algebra associated with a nilpotent element}

\author[E.\,Mukhin, V.\,Tarasov,  A.\,Varchenko]
{E.\,Mukhin$\>^{*,1}$, V.\,Tarasov$\>^{\star,*}$,
and A.\,Varchenko$\>^{\diamond,2}$}

\thanks{${}^1$\ Supported in part by NSF grant DMS-0601005}

\thanks{${}^2$\ Supported in part by NSF grant DMS-0555327}

\maketitle

\begin{center}
{\it $^\star\<$Department of Mathematical Sciences
Indiana University\,--\>Purdue University Indianapolis\\
402 North Blackford St, Indianapolis, IN 46202-3216, USA\/}

\medskip
{\it $^*\<$St.\,Petersburg Branch of Steklov Mathematical Institute\\
Fontanka 27, St.\,Petersburg, 191023, Russia\/}

\medskip
{\it $^\diamond\<$Department of Mathematics, University of North Carolina
at Chapel Hill\\ Chapel Hill, NC 27599-3250, USA\/}
\end{center}

\medskip
\begin{abstract}
To any $2\times 2$-matrix $\bs K$
one assigns a commutative subalgebra
$\B^{\bs K}\subset U(\frak{gl}_2[t])$ called a Bethe algebra.
We describe relations between the Bethe algebras,
associated with the zero matrix and a nilpotent
matrix.
\end{abstract}

\maketitle

\section{Introduction} To any $N\times N$-matrix $\bs K$
one assigns a commutative subalgebra
$\B^{\bs K}\subset U(\frak{gl}_N[t])$ called a Bethe algebra \cite{T},
\cite{MTV1}, \cite{CT}. The Bethe
algebra acts on any $U(\frak{gl}_N[t])$-module giving an example of a
quantum integrable system. In particular, it acts on any evaluation
$U(\frak{gl}_N[t])$-module $L_{\bs\la}(0)$, where $L_{\bs\la}$ is the
irreducible finite-dimensional $\gln$-module with some highest dominant integral
weight $\bs\la$.

The most interesting of the Bethe algebras is the Bethe algebra $\B^0$
associated with the zero matrix $\bs K$. The Bethe algebra $\B^0$ is
closely connected with Schubert calculus in Grassmannians of
$N$-dimensional subspaces. The eigenvectors of the $\B^0$-action on
suitable
$U(\frak{gl}_N[t])$-modules are in a bijective correspondence
with intersection points of suitable Schubert cycles \cite{MTV3},
\cite{MTV4}. The most important of those $U(\frak{gl}_N[t])$-modules
is the infinite-dimensional module $\V^S = (V^{\otimes n}\otimes
\C[z_1,\dots,z_n])^S$ introduced in \cite{MTV3}. Here $V^{\otimes n}$
is the $n$-fold tensor power of the vector representation of $\glN$
and the upper index $S$ denotes the subspace of invariants with
respect to a natural action of the symmetric group $S_n$. The other
$U(\frak{gl}_N[t])$-modules related to Schubert calculus are
subquotients of $\V^S$.

The Bethe algebra $\B^0$ commutes with the
subalgebra $U(\frak{gl}_N) \subset U(\frak{gl}_N[t])$. Let
$\V^S = \oplus _{\bs\la} \V^{S,0}_{\bs\la}$
be the $\gln$-isotypical
decomposition, where $\bs\la$ runs through $\gln$-highest weights. The Bethe
algebra $\B^0$ preserves this decomposition and
$\B^0_{\V^S} = \oplus_{\bs\la}\B_{\bs\la}^0$,
where $\B^0_{\V^S}\subset \End (\V^S)$ and
$\B^0_{\bs\la}\subset \End (\V^{S,0}_{\bs\la})$ are the images of $\B^0$.
It is shown in \cite{MTV3} that the Bethe
algebra $\B^0_{\bs\la}$ is isomorphic to the algebra $\O^0_{\bs\la}$
of
functions on a suitable Schubert cell $\Omega_{\bs\la}$ in a Grassmannian.
It is also shown that the
$\B^0_{\bs\la}$-module $\V^{S,0}_{\bs\la}$ is isomorphic to the
regular representation of $\O^0_{\bs\la}$. These statements
give a geometric
interpretation of the $\B^0_{\bs\la}$-module $\V^{S,0}_{\bs\la}$
(or representational interpretation of $\O^0_{\bs\la}$)
and they are key facts for applications of Bethe algebras to Schubert calculus.

This paper has two goals.

The first is to extend these results to the Bethe algebras
$\B^{\bs K}$ associated with nonzero matrices $\bs K$.
Note that this goal was accomplished in \cite{MTV5}
for diagonal matrices $\bs K$ with distinct diagonal entries.

The second goal
is to express
the $\B^{\bs K}$-action on the infinite-dimensional module
$\V^S$ in terms of the
$\B^{0}$-action on $\V^S$ and the $\B^{\bs K}$-actions on
finite-dimensional modules $L_{\bs\la}(0)$.

In this paper we achieve these two goals for one example: $N=2$ and $
\bs K =
\left( \begin{matrix}
{} 0 & 0\\
-1 & 0
\end{matrix}\right)$.

We denote $\B$ the Bethe algebra $\B^{\bs K}$
associated with that nilpotent matrix $\bs K$.
We define a decomposition $\V^S = \oplus_{\bs\la} \V^S_{\bs\la}$ into
suitable $\B$-modules called
the deformed isotypical components of $\V^S$. For any $\bs\la$,
$\V^{S}_{\bs\la}$ is a suitable deformation
of the isotypical component $\V^{S,0}_{\bs\la}$. In particular,
$\V^{S}_{\bs\la}$ and $\V^{S,0}_{\bs\la}$ have equal ranks
as $\C[z_1,\dots,z_n]^S$-modules.
We have $\B_{\V^S} = \oplus_{\bs\la} \B_{\bs\la}$,
where
$\B_{\V^S}\subset \End (\V^S)$ and
$\B_{\bs\la}\subset \End (\V^{S}_{\bs\la})$ are the images of $\B$.

For any $\bs\la = (n-k,k)$, the image of $\B$ in
$L_{\bs\la}(0)$ is isomorphic to $\A_{n-2k} = \C[b]/\langle b^{n-2k+1}\rangle$.
The algebra $\A_{n-2k}$ acts on $L_{\bs\la}(0)$ by the formula
$b\mapsto e_{21}$ where $e_{21}$ is one of the four standard generators of $\glt$.
We show that the $\B_{\bs\la}$-module $\V^S_{\bs\la}$ is isomorphic to the regular
representation of $\A_{n-2k}\otimes \O^0_{\bs\la}$,
Theorems \ref{first} and \ref{first1}.
This statement gives a
geometric interpretation of
the $\B_{\bs\la}$-module $\V_{\bs\la}$ as the regular
representation of the algebra of functions with nilpotents on the Schubert cell
$\Omega_{\bs\la}$, where the nilpotents are determined by the algebra
$\A_{n-2k}$.
This statement is our achievement of the first goal of this paper.

We define an action
of $\A_{n-2k}\otimes \B^0_{\bs\la}$ on $\V^{S,0}_{\bs\la}$ by the formula
$b^j\otimes B : v \mapsto (e_{21})^jBv$.
The $\A_{n-2k}\otimes \B^0_{\bs\la}$-module $\V^{S,0}_{\bs\la}$
is isomorphic to the regular
representation of $\A_{n-2k} \otimes \O^0_{\bs\la}$ due to Theorems 5.3 and 5.6
in \cite{MTV3}.

As a result of these descriptions
of the
$\B_{\bs\la}$-module $\V^S_{\bs\la}$ and
$\A_{n-2k}\otimes \B^0_{\bs\la}$-module $\V^{S,0}_{\bs\la}$,
we construct an algebra isomorphism
$\nu_{\bs\la} : \A_{n-2k}\otimes B_{\bs\la}^0 \to B_{\bs\la}$
and a linear isomorphism $\eta_{\bs\la} : \V^{S,0}_{\bs\la}
\to \V^{S}_{\bs\la}$ which establish an isomorphism of the
$\B_{\bs\la}$-module $\V^S_{\bs\la}$ and
$\A_{n-2k}\otimes \B^0_{\bs\la}$-module $\V^{S,0}_{\bs\la}$,
see Theorem \ref{last thm}. This statement is our achievement of
the second goal.

\medskip

The paper is organized as follows. In Section \ref{alg sec}, we
discuss representations of $U(\gltt)$ and introduce
the $U(\gltt)$-module $\V^S$. We introduce the Bethe algebra $\B^{\bs K}$
in Section \ref{Bethe sec}. We define decompositions $\V^S =
\oplus_{\bs\la} \V^S_{\bs\la}$ and $\B_{\V^S} = \oplus_{\bs\la}
\B_{\bs\la}$ in Section \ref{sec actionss}. We study
deformed isotypical components in Section \ref{sec More}.
Section \ref{sec Ol} is on the algebra
$\Ol \simeq \A_{n-2k}\otimes \Olo$.
The first connections between the algebras $\B_{\bs\la}$
and $\O_{\bs\la}$ are discussed in Section \ref{Ol and B}.
In Section \ref{sec ISOMORR} we show that the
the $\B_{\bs\la}$-module $\V^S_{\bs\la}$ is isomorphic to the regular
representation of $\A_{n-2k}\otimes \O^0_{\bs\la}$.
In Section \ref{sec comparison} we show that the
$\B_{\bs\la}$-module $\V^S_{\bs\la}$ and
$\A_{n-2k}\otimes \B^0_{\bs\la}$-module $\V^{S,0}_{\bs\la}$ are isomorphic.

\medskip
In \cite{FFR}, the authors study the Bethe algebra associated with
a principal nilpotent element. One of
our motivations was
to relate the picture in \cite {FFR} with our description of Bethe
algebras in \cite{MTV3}, \cite{MTV5}.

\section{Representations of current algebra $\gltt$}
\label{alg sec}
\subsection{Lie algebra $\glt$}
Let $e_{ij}$, $i,j=1,2$, be the standard generators of the complex
Lie algebra
$\glt$ satisfying the relations
$[e_{ij},e_{sk}]=\dl_{js}e_{ik}-\dl_{ik}e_{sj}$. We identify the Lie algebra
$\slt$ with the subalgebra in $\glt$ generated by the elements
$e_{11}-e_{22}, e_{12}, e_{21}$.

The elements
$e_{11}+e_{22}$ \,and $(e_{11}+1)e_{22}-e_{21}e_{12}$
are free generators of the center of\/ $\Uglt$.

Let $M$ be a $\glt$-module. A vector $v\in M$ has weight
$\bs\la=(\la_1,\la_2)\in\C^2$ if $e_{ii}v=\la_iv$ for $i=1,2$.
A vector $v$ is called {\it singular\/} if $e_{12}v=0$.

We denote by $M[\bs\la]$ the subspace of $M$ of weight $\bs\la$,
by $\sing M$ the subspace of $M$ of all singular vectors and by
$\sing M[\bs\la]$ the subspace of $M$ of all singular vectors
of weight $\bs\la$.

Denote $L_{\bs\la}$ the irreducible finite-dimensional $\glt$-module with
highest weight $\bs\la$. Any finite-dimensional $\glt$
weight module $M$ is isomorphic
to the direct sum $\bigoplus_{\bs\la}L_{\bs\la}\otimes\sing M[\bs\la]$,
where the spaces $\sing M[\bs\la]$ are considered as trivial
$\glt$-modules.

The $\glt$-module $L_{(1,0)}$ is the standard $2$-dimensional vector
representation of $\glt$. We denote it $V$. We choose a highest weight
vector of $V$ and denote it $v_+$. A $\glt$-module $M$ is called
polynomial if it is isomorphic to a submodule of $V^{\otimes n}$ for
some $n$.

A sequence of integers $\bs\la=(\la_1,\la_2)$,
$\la_1\ge\la_2\ge0$,\ is called a {\it partition with at most
$2$ parts\/}. Denote $|\bs\la|=\la_1 +\la_2$. We say that $\bs\la$
is a partition of $|\bs\la|$.

\sskip
The $\glt$-module $V^{\otimes n}$ contains the module $L_{\bs\la}$
if and only if $\bs\la$ is a partition of $n$ with at most $2$ parts.

\sskip
For a Lie algebra $\g\,$, we denote $U(\g)$ the universal enveloping algebra
of $\g$.

\subsection{Current algebra $\gltt$}
\label{current}
Let $\gltt=\glt\otimes\C[t]$ be the complex
Lie algebra of $\glt$-valued polynomials
with the pointwise commutator.
We identify $\glt$ with the subalgebra $\glt\otimes1$
of constant polynomials in $\gltt$. Hence, any $\gltt$-module has a canonical
structure of a $\glt$-module.

The standard generators of $\gltt$ are $e_{ij}\otimes t^r$, $i,j=1,2$,
$r\in\Z_{\ge0}$. They satisfy the relations
$[e_{ij}\otimes t^r,e_{sk}\otimes t^p]=
\dl_{js}e_{ik}\otimes t^{r+p}-\dl_{ik}e_{sj}\otimes t^{r+p}$.

The subalgebra $\z_2[t]\subset\gltt$ generated by the elements
$(e_{11}+e_{22})\otimes t^r$, $r\in\Z_{\ge0}$, is central.
The Lie algebra $\gltt$ is canonically isomorphic to the direct sum
$\slt[t]\oplus\z_2[t]$.

It is convenient to collect elements of $\gltt$ in generating series
of a variable $u$. For $g\in\glt$, set
\vvn-.2>
\be
g(u)=\sum_{s=0}^\infty (g\otimes t^s)u^{-s-1}.
\vv.2>
\ee
We have $(u-v)[e_{ij}(u),e_{sk}(v)] =
\dl_{js}(e_{ik}(u)-e_{ik}(v)) -
\dl_{ik}(e_{sj}(u)-e_{sj}(v))$.

For each $a\in\C$, there is an automorphism $\rho_a$ of $\gltt$,
\;$\rho_a:g(u)\mapsto g(u-a)$. Given a $\gltt$-module $M$, we denote by $M(a)$
the pull-back of $M$ through the automorphism $\rho_a$. As $\glt$-modules,
$M$ and $M(a)$ are isomorphic by the identity map.

For any $\gltt$-modules $L,M$ and any $a\in\C$, the identity map
$(L\otimes M)(a)\to L(a)\otimes M(a)$ is an isomorphism of $\gltt$-modules.

We have the evaluation homomorphism,
${\ev:\gltt\to\glt}$, \;${\ev:g(u) \mapsto g\>u^{-1}}$.
Its restriction to the subalgebra $\glt\subset\gltt$ is the identity map.
For any $\glt$-module $M$, we denote by the same letter the $\gltt$-module,
obtained by pulling $M$ back through the evaluation homomorphism. Then for each
$a\in\C$, the $\gltt$-module $M(a)$ is called an {\it evaluation module\/}.

\medskip

Define a grading on $\gltt$ such that the degree of
$e_{ij}\otimes t^r$ equals $r+j-i$ for all $i,j,r$.
We set the degree of $u$ to be $1$.
Then the series $g(u)$ is homogeneous of degree $j-i-1$.

A $\gltt$-module is called {\it graded\/} if it
has a bounded from below $\Z$-grading compatible with the grading on $\gltt$.
Any irreducible graded $\gltt$-module is isomorphic to an evaluation module
$L(0)$ for some irreducible $\glt$-module $L$, see~\cite{CG}.

Let $M$ be a $\Z$-graded space with finite-dimensional homogeneous
components. Let $M_j\subset M$ be the homogeneous component of degree $j$.
We call the Laurent series in a variable $q$,
\vvn-.2>
\be
\ch_M(q)\ =\ \sum_{j}\ (\dim M_j)\,q^j\,,
\vv-.2>
\ee
the {\it graded character\/} of $M$.

\subsection{Weyl modules}
\label{secweyl}
Let $W_m$ be the $\gltt$-module generated by a vector $v_m$ with the
defining relations:
\vvn-.2>
\bea
e_{11} (u)v_m= \frac mu\,v_m\,, \qquad
e_{22} (u)v_m=\>0\,,
\\
e_{12} (u)v_m=\>0\,,\qquad (e_{21} \otimes1)^{m+1}v_m=\>0\,.
\eea
As an $\slt[t]$-module, the module $W_m$ is isomorphic to the Weyl module from
\cite{CL}, \cite{CP}, corresponding to the weight $m\om$, where $\om$ is
the fundamental weight of $\slt$. Note that $W_1=V(0)$.

\begin{lem}[\cite{CP}, cf. \cite{MTV3}]
\label{weyl}
The module $W_m$ has the following properties.
\begin{enumerate}
\item[(i)] The module \>$W_m$ has a unique grading such that\/ \>$W_m$ is
a graded $\gltt$-module and the degree of\/ $v_m$ equals\/ $0$.

\item[(ii)] As a $\glt$-module, $W_m$ is isomorphic to $V^{\otimes m}$.

\item[(iii)] A $\gltt$-module $M$ is an irreducible subquotient of\/ $W_m$
if and only if\/ $M$ has the form $L_{\bs\la}(0)$,
where\/ $\bs\la$ is a partition of\/ $m$ with at most \/ $2$ parts.

\item[(iv)]
Consider the decomposition of $W_m$ into isotypical components of the
$\glt$-action, \\
$W_m = \oplus_{\bs\la} (W_m)_{\bs\la}$,
where $(W_m)_{\bs\la}$
is the isotypical component corresponding
to the irreducible polynomial $\glt$-module
with highest weight $\bs\la = (m-k,k)$.
Then for any $\bs\la$,
the graded character of $(W_m)_{\bs\la}$ is given by
\vvn-.2>
\be
\ch_{(W_m)_{\bs\la}}(q)\,=\,
\frac{(1-q^{m-2k+1})^2}
{1-q}\,
\frac{(q)_m }
{(q)_{m-k+1}(q)_{k}}\,q^{2k-m}\ ,
\ee
where\/ $\,(q)_a=\prod_{j=1}^a(1-q^j)\,$.
\end{enumerate}
\end{lem}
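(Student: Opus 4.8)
The plan is to establish the four claims in the order (ii), (i), (iii), (iv), since (ii) is the structural backbone and the graded character in (iv) requires knowing both the grading of (i) and the $\glt$-module structure of (ii). For part (ii), I would cite the known identification of $W_m$ as an $\slt[t]$-module with the Chari--Loktev/Chari--Pressley Weyl module corresponding to $m\omega$, together with the fact that the $\z_2[t]$-central part acts through $e_{11}+e_{22}$ by the scalar $m$ on $v_m$ (forced by the defining relations $e_{11}(u)v_m=\tfrac mu v_m$, $e_{22}(u)v_m=0$). The Weyl module for $m\omega$ has the correct dimension $2^m$ and, as a $\slt$-module, is isomorphic to $(\C^2)^{\otimes m}$; combined with the central character this gives $W_m\simeq V^{\otimes m}$ as a $\glt$-module. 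Alternatively one can argue directly: the defining relations show $W_m$ is a quotient of the $\glt$-module $V^{\otimes m}$ (extended to $\gltt$ via the relations), and a PBW/spanning argument bounds $\dim W_m\le 2^m$, while the tensor product of evaluation modules $V(b_1)\otimes\cdots\otimes V(b_m)$ with generic distinct $b_i$ surjects onto (a module isomorphic to) $W_m$ by a specialization/flatness argument, giving the reverse inequality.

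For part (i), existence of the grading: the defining relations of $W_m$ are homogeneous with respect to the grading on $\gltt$ in which $\deg(e_{ij}\otimes t^r)=r+j-i$ once we declare $\deg v_m=0$ — indeed $e_{11}(u)v_m=\tfrac mu v_m$ is homogeneous of degree $-1$ on both sides, $e_{22}(u)v_m=0$ and $e_{12}(u)v_m=0$ are trivially homogeneous, and $(e_{21}\otimes1)^{m+1}v_m=0$ is homogeneous of degree $m+1$. Hence the ideal of relations is graded and $W_m$ inherits a $\Z$-grading, bounded below because $\gltt$ acts with degrees $\ge -1$ in a way that, on the cyclic module generated in degree $0$ by lowering-type operators of positive degree and $e_{11}\otimes t^r$ of degree $r\ge0$... more carefully: $e_{21}\otimes t^r$ has degree $r+1>0$, $e_{11}\otimes t^r$ and $e_{22}\otimes t^r$ have degree $r\ge0$, and $e_{12}\otimes t^r$ has degree $r-1\ge -1$; since $e_{12}(u)v_m=0$ and more generally the top of each $\slt$-string is annihilated appropriately, one checks the grading is bounded below by $0$. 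Uniqueness follows because $W_m$ is generated by $v_m$, so the grading is determined by $\deg v_m=0$.

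Part (iii) follows from (ii) and Lemma-level facts recalled in the excerpt: the irreducible subquotients of $W_m$ as a $\glt$-module are exactly the $L_{\bs\la}$ with $\bs\la$ a partition of $m$ with at most two parts (since $W_m\simeq V^{\otimes m}$), and every irreducible graded $\gltt$-subquotient is an evaluation module $L(0)$ by \cite{CG}; matching the $\glt$-type forces $L=L_{\bs\la}$ and the grading forces the evaluation point $0$. Conversely each such $L_{\bs\la}(0)$ does occur, e.g. as the graded piece realizing the top of the corresponding isotypical component. Part (iv) is the computational heart. The character $\ch_{(W_m)_{\bs\la}}(q)$ is extracted from the graded character of $W_m$ itself: by (ii) and (i), $\ch_{W_m}(q)$ is a known $q$-analogue — for the $\slt[t]$ Weyl module it is a product of Gaussian binomials, explicitly of the Kostka/principal-specialization type, and one decomposes it into $\glt$-isotypical pieces by the standard $q$-Clebsch--Gordan / Durfee-square identity, isolating the weight-$(m-k,k)$ multiplicity space. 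The factor $(1-q^{m-2k+1})^2/(1-q)$ is the graded dimension of $\sing (W_m)[\bs\la]$ (a $q$-analogue of the plain multiplicity, which for $V^{\otimes m}$ is the number of standard Young tableaux of the relevant shape), the Gaussian-binomial-type factor $(q)_m/((q)_{m-k+1}(q)_k)$ records the internal grading of the $L_{\bs\la}$-copy together with the combinatorics of the Weyl module filtration, and $q^{2k-m}$ fixes the degree normalization so that the lowest degree matches $\deg v_m=0$ shifted appropriately. I would verify the formula either by induction on $m$ using the $\slt[t]$ fusion/tensor-product short exact sequences for Weyl modules, or by a direct bijective/generating-function computation; a useful sanity check is that setting $q=1$ recovers the classical multiplicity $\binom{m}{k}-\binom{m}{k-1}$ of $L_{(m-k,k)}$ in $V^{\otimes m}$, and that summing over $k$ recovers $\ch_{W_m}(q)$. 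The main obstacle I anticipate is precisely this graded-character bookkeeping in (iv): keeping the three grading shifts consistent (the intrinsic grading of $W_m$, the internal grading on each $L_{\bs\la}$, and the normalization $q^{2k-m}$) and matching the combinatorial $q$-statistic to the Weyl-module filtration is where the real work lies, whereas (i)--(iii) are essentially formal consequences of the definitions together with the cited Weyl-module and graded-module classifications.
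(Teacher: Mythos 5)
For context: the paper does not prove this lemma directly at all; its ``proof'' is the single remark that the statement follows from Lemma 2.2 of \cite{MTV3} (the $\gln$ analogue), the result being attributed to \cite{CP}. Your plan is therefore a self-contained alternative, and for parts (i)--(iii) it is essentially sound: homogeneity of the defining relations plus cyclicity of $W_m$ gives existence and uniqueness of the grading (your boundedness argument works because, by PBW with the raising operators placed last, $W_m$ is spanned by vectors obtained from $v_m$ by operators $e_{21}\otimes t^r$, $e_{11}\otimes t^r$, $e_{22}\otimes t^r$ of nonnegative degree); the identification with the Chari--Pressley Weyl module for $m\om$ together with the central character of $\z_2[t]$ gives (ii); and (iii) follows from (ii) and the classification of irreducible graded $\gltt$-modules as evaluation modules $L(0)$ quoted from \cite{CG}.

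The genuine gap is part (iv), which is the actual content of the lemma. You do not prove the character formula: you say you ``would verify'' it by induction on $m$ via fusion exact sequences or by a bijective/generating-function computation, but neither is carried out, and this is exactly where the work lies. Moreover, your sanity check is miscalibrated: the factor $(1-q^{m-2k+1})^2/(1-q)$ cannot be the graded dimension of $\sing (W_m)[\bs\la]$, since it vanishes at $q=1$, whereas that space has dimension equal to the multiplicity $\binom{m}{k}-\binom{m}{k-1}>0$. The correct split, consistent with the grading in which $e_{21}$ has degree $1$, is
\be
\ch_{(W_m)_{\bs\la}}(q)\;=\;\frac{1-q^{m-2k+1}}{1-q}\;\cdot\;
\ch_{\sing (W_m)[\bs\la]}(q)\,,\qquad
\ch_{\sing (W_m)[\bs\la]}(q)\;=\;(1-q^{m-2k+1})\,
\frac{(q)_m}{(q)_{m-k+1}(q)_k}\,q^{2k-m}\,,
\ee
i.e.\ the $q$-dimension $[m-2k+1]_q$ of $L_{\bs\la}$ times the graded multiplicity, the latter carrying the Gaussian-binomial-type factor. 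So as written, (iv) rests on an unexecuted computation guided by a wrong identification of the factors; to complete the proof you must either actually perform the induction on $m$ (via the Weyl-module fusion filtrations) or do what the paper does and invoke Lemma 2.2 of \cite{MTV3}.
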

\begin{proof}
A proof follows from Lemma 2.2 in \cite{MTV3}.
\vvgood
\end{proof}

Given sequences $\bs n=(n_1,\dots,n_k)$ of natural numbers and
$\bs b=(b_1,\dots,b_k)$ of distinct complex numbers, we call the $\gltt$-module
$\otimes_{s=1}^k W_{n_s}(b_s)$ the {\it Weyl module associated with\/ $\bs n$
and\/ $\bs b$}.

\subsection{$\gltt$-module $\V^S$}
\label{VS}

Let $\V$ be the space of polynomials in $z_1,\dots,z_n$ with coefficients
in $V^{\otimes n}$,
\vvn-.3>
\be
\V\>=\,V^{\otimes n}\<\otimes_{\C}\C[z_1,\dots,z_n]\,.
\vv.2>
\ee
The space $V^{\otimes n}$ is embedded in $\V$ as the subspace of constant
polynomials.

For $v\in V^{\otimes n}$ and
$p(z_1,\dots,z_n)\in\C[z_1,\dots,z_n]$, we write
$p(z_1,\dots,z_n)\,v$ to denote $v\otimes p(z_1,\dots,z_n)$.

The symmetric group $S_n$ acts on $\V$ by permuting the factors
of $V^{\otimes n}$ and the variables $z_1,\dots,z_n$ simultaneously,
\vvn.2>
\be
\si\bigl(p(z_1,\dots,z_n)\,v_1\otimes\dots\otimes v_n\bigr)\,=\,
p(z_{\si(1)},\dots,z_{\si(n)})\,
v_{\si^{-1}(1)}\!\otimes\dots\otimes v_{\sigma^{-1}(n)}\,,\qquad\si\in S_n\,.
\kern-3em
\vv.2>
\ee
We denote $\V^S$ the subspace of $S_n$-invariants of $\V$.

\begin{lem}[\cite{CP}, cf. \cite{MTV3}]
\label{VSfree}
The space $\V^S$ is a free $\C[z_1,\dots,z_n]^S$-module of rank $2^n$.
\end{lem}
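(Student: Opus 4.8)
\emph{Proof strategy.} Write $P=\C[z_1,\dots,z_n]$ and $P^S=\C[z_1,\dots,z_n]^S$, so that $P^S$ is a polynomial ring in the elementary symmetric functions and $P$ is finitely generated and free over $P^S$ (classical). The plan is to prove the two halves of the statement separately: that $\V^S$ is a \emph{free} $P^S$-module, and that its rank equals $2^n$. It is convenient to grade $\V=V^{\otimes n}\otimes_\C P$ by putting each $z_i$ in degree $1$ and $V$ in degree $0$; then $S_n$ acts by degree-zero operators, $P^S$ is a nonnegatively graded subring with $P^S_0=\C$, and $\V$ and $\V^S$ become finitely generated graded $P^S$-modules with finite-dimensional graded components.

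\emph{Freeness.} First I would observe that the averaging operator $\frac1{n!}\sum_{\sigma\in S_n}\sigma$ is a degree-zero idempotent in $\End(\V)$, that it is $P^S$-linear (because $S_n$ fixes $P^S$ pointwise and acts $\C$-linearly on $V^{\otimes n}$), and that its image is exactly $\V^S$. Since $\V$ is free of finite rank over $P^S$, its direct summand $\V^S$ is a finitely generated graded projective $P^S$-module, and a finitely generated graded projective module over the nonnegatively graded polynomial ring $P^S$ is free (graded Nakayama; alternatively Quillen--Suslin).

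\emph{The rank.} To pin down the rank I would compute on the generic fiber. Set $F=\on{Frac}(P^S)$; localization is flat and commutes with the above idempotent, so $\V^S\otimes_{P^S}F=(\V\otimes_{P^S}F)^{S_n}$ and the rank of $\V^S$ over $P^S$ equals $\dim_F(\V\otimes_{P^S}F)^{S_n}$. Now $P\otimes_{P^S}F=\on{Frac}(P)=:E$, and since $P^{S}=P^{S_n}$ inside $P$, by Artin's theorem $E/F$ is Galois with group $S_n$; hence $\V\otimes_{P^S}F=V^{\otimes n}\otimes_\C E$ with $S_n$ acting diagonally, by permutations on $V^{\otimes n}$ and by the Galois action on $E$. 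The normal basis theorem supplies an $F[S_n]$-module isomorphism $E\cong F[S_n]$, so $V^{\otimes n}\otimes_\C E\cong(V^{\otimes n}\otimes_\C F)\otimes_F F[S_n]$ as $F[S_n]$-modules; and for any $F[S_n]$-module $M$ one has $(M\otimes_F F[S_n])^{S_n}\cong M$ (via $m\mapsto\sum_{\sigma\in S_n}(\sigma m)\otimes\sigma$). Therefore $(\V\otimes_{P^S}F)^{S_n}\cong V^{\otimes n}\otimes_\C F$ has dimension $\dim_\C V^{\otimes n}=2^n$ over $F$, and $\V^S$ is free over $\C[z_1,\dots,z_n]^S$ of rank $2^n$.

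\emph{Where the work is.} Both halves can in fact be obtained together from the $S_n$-equivariant form of Chevalley's theorem, $P\cong P^S\otimes_\C H$ as $P^S[S_n]$-modules with the coinvariant algebra $H$ the regular representation $\C[S_n]$ (valid in characteristic $0$): then $\V\cong P^S\otimes_\C(V^{\otimes n}\otimes_\C\C[S_n])$ over $P^S[S_n]$, and taking $S_n$-invariants gives $\V^S\cong P^S\otimes_\C V^{\otimes n}$ at once. So the one genuinely non-formal ingredient, in either route, is the identification of the generic (equivalently, the coinvariant) $S_n$-representation on $\C[z_1,\dots,z_n]$ with the regular representation; this is where I expect the only real work to lie, and it is exactly the $N=2$ case of the statement already recorded for $\gln$ in \cite{CP}, \cite{MTV3}.
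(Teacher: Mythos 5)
Your argument is correct. Note, however, that the paper itself does not prove Lemma \ref{VSfree}: it is quoted from \cite{CP} (cf.\ \cite{MTV3}), where the statement is obtained in the concrete way, essentially by exhibiting an explicit basis of $\V^S$ over $\C[z_1,\dots,z_n]^S$ built from symmetrized monomial vectors (this explicit description is what later feeds into graded-character computations such as Lemma \ref{weyl}). Your route is more structural and equally valid in characteristic $0$: the averaging idempotent exhibits $\V^S$ as a graded direct summand of the free module $\V=V^{\otimes n}\otimes\C[z_1,\dots,z_n]$ (free because $\C[z_1,\dots,z_n]$ is free of rank $n!$ over the symmetric polynomials), and graded projective over the nonnegatively graded polynomial ring $\C[z_1,\dots,z_n]^S$ implies graded free; the rank is then read off on the generic fiber, where Artin's theorem identifies $\mathrm{Frac}(\C[z_1,\dots,z_n])$ as an $S_n$-Galois extension and the normal basis theorem gives $(V^{\otimes n}\otimes E)^{S_n}\simeq V^{\otimes n}\otimes F$, of dimension $2^n$. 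Your closing remark is also accurate: the equivariant form of Chevalley's theorem, with the coinvariant algebra carrying the regular representation of $S_n$, yields both freeness and the rank in one step, and that identification is indeed the only non-formal input. The trade-off is that the cited proof produces explicit basis vectors compatible with the weight and graded decompositions used later in the paper, whereas your proof is shorter but nonconstructive.
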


We consider the space $\V$ as a $\gltt$-module with
a series $g(u)$, \,$g\in\glt$, acting by
\vvn.1>
\beq
\label{action}
g(u)\,\bigl(p(z_1,\dots,z_n)\,v_1\otimes\dots\otimes v_n)\,=\,
p(z_1,\dots,z_n)\,\sum_{s=1}^n
\frac{v_1\otimes\dots\otimes gv_s\otimes\dots\otimes v_n}{u-z_s}\ .
\vv.2>
\eeq
The $\gltt$-action on $\V$ commutes with the $S_n$-action.
Hence, $\V^S$ is a $\gltt$-submodule of $\V$.

\medskip
The space $\mc V^S$ as an $sl_2$-module was introduced and studied in \cite{CP}.

\subsection{Weyl modules as quotients of $\V^S$}
\label{Weyl modules as quotients}
Let $\si_s(\bs z)$, $s=1,\alb\dots,n$, be the $s$-th elementary
symmetric polynomial in $z_1,\dots,z_n$.
For $\bs a=(a_1,\dots,a_n)\in\C^n$, denote
$I_{\bs a}\subset\C[z_1,\dots,z_n]$ the ideal generated by
the polynomials $\si_s(\bs z)-a_s$, $s=1,\dots, n$.
Define
\vvn.3>
\beq
\label{IVa}
I^\V_{\bs a}=(V^{\otimes n}\otimes I_{\bs a})\tbigcap \V^S. \vv.2>
\eeq
Clearly, ${I^\V_{\bs a}}$ is a $\gltt$-submodule of $\V^S$ and a
free $\C[z_1,\dots,z_n]^S$-module.

Define distinct complex numbers $b_1,\dots,b_k$ and
natural numbers $n_1,\dots,n_k$ by the relation
\vvn-.6>
\beq\label{ab}
\prod_{s=1}^k\,(u-b_s)^{n_s}=\,u^n+\>\sum_{j=1}^n (-1)^j\>a_j\>u^{n-j}.
\vv-.7>
\eeq
Clearly, $\sum_{s=1}^kn_s=n$.

\begin{lem}[\cite{CP}, cf. \cite{MTV3}]
\label{factor=weyl}
The $\gltt$-modules $\V^S/{I^\V_{\bs a}}$ and
$\otimes_{s=1}^kW_{n_s}(b_s)$ are isomorphic.
\end{lem}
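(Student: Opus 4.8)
The plan is to exhibit a surjective $\gltt$-module homomorphism $\V^S \to \otimes_{s=1}^k W_{n_s}(b_s)$ whose kernel is exactly $I^\V_{\bs a}$, and then to match ranks over $\C[z_1,\dots,z_n]^S$ to conclude. First I would construct the map: the tensor product $v_+^{\otimes n}\otimes 1 \in \V$ is $S_n$-invariant and lies in $\V^S$; I claim it generates $\V^S$ as a $\gltt$-module. Indeed, by Lemma \ref{VSfree} the module $\V^S$ is a free $\C[z_1,\dots,z_n]^S$-module of rank $2^n$, and the subalgebra of $\gltt$ generated by the $e_{21}\otimes t^r$ together with multiplication by symmetric polynomials (realized via $e_{11}(u)$ acting on a highest-weight-type vector) should already produce a spanning set; this cyclicity is the technical engine and I expect it is where the real work sits, so I would either cite the relevant statement in \cite{CP} or reduce it to Lemma \ref{weyl}(ii)–(iii) via the quotient construction below. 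Granting cyclicity with cyclic vector $\bs 1 := v_+^{\otimes n}\otimes 1$, one checks the relations satisfied by $\bs 1$: computing with \Ref{action} gives $e_{22}(u)\bs 1 = 0$, $e_{12}(u)\bs 1 = 0$, and $e_{11}(u)\bs 1 = \bigl(\sum_{s=1}^n (u-z_s)^{-1}\bigr)\bs 1$, which on $\V^S$ equals $\bigl(p'(u)/p(u)\bigr)\bs 1$ where $p(u) = \prod_s (u-z_s) = u^n + \sum_j (-1)^j \sigma_j(\bs z) u^{n-j}$.

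Next I would pass to the quotient $\V^S / I^\V_{\bs a}$. On this quotient the symmetric functions $\sigma_j(\bs z)$ act as the scalars $a_j$, so by \Ref{ab} the image $\bar{\bs 1}$ of $\bs 1$ satisfies $e_{11}(u)\bar{\bs 1} = \bigl(\tfrac{d}{du}\log \prod_s(u-b_s)^{n_s}\bigr)\bar{\bs 1} = \bigl(\sum_{s=1}^k \tfrac{n_s}{u-b_s}\bigr)\bar{\bs 1}$, together with $e_{22}(u)\bar{\bs 1}=0$ and $e_{12}(u)\bar{\bs 1}=0$; and since $e_{21}$ acts on $V$ by a nilpotent of order $2$ on each tensor factor, a weight count gives $(e_{21}\otimes 1)^{n+1}\bar{\bs 1}=0$. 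These are precisely the defining relations (suitably translated by the $b_s$) of the cyclic vector of $\otimes_{s=1}^k W_{n_s}(b_s)$ — this is the standard fact that the Weyl module associated with $\bs n,\bs b$ is the $\gltt$-module generated by a vector with those fusion-type relations, which I would take from \cite{CP}. Hence there is a surjective $\gltt$-homomorphism
\beq
\phi : \V^S / I^\V_{\bs a} \;\twoheadrightarrow\; \otimes_{s=1}^k W_{n_s}(b_s),
\eeq
since $\V^S/I^\V_{\bs a}$ is cyclic and maps its cyclic vector to the cyclic vector of the target.

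It remains to see $\phi$ is injective, and here I would count dimensions. By Lemma \ref{weyl}(ii), $W_{n_s}(b_s) \simeq V^{\otimes n_s}$ as a vector space, so $\dim \bigl(\otimes_{s=1}^k W_{n_s}(b_s)\bigr) = 2^{n_1}\cdots 2^{n_k} = 2^n$ since $\sum_s n_s = n$. On the other side, $\V^S$ is free of rank $2^n$ over $\C[z_1,\dots,z_n]^S$ by Lemma \ref{VSfree}, and $I^\V_{\bs a} = (V^{\otimes n}\otimes I_{\bs a})\cap \V^S$ is the submodule cutting out the fiber over the point $\bs a$; since $\C[z_1,\dots,z_n]^S/I_{\bs a}\cap\C[z_1,\dots,z_n]^S \cong \C$ (the $\sigma_j$ are algebraically independent generators and $I_{\bs a}$ fixes their values), the quotient $\V^S/I^\V_{\bs a}$ has dimension $2^n$ as well. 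A surjection between vector spaces of equal finite dimension is an isomorphism, so $\phi$ is an isomorphism of $\gltt$-modules, which is the assertion. The main obstacle, as noted, is establishing cyclicity of $\V^S$ (equivalently of its finite-dimensional quotients) with the explicit cyclic vector $\bs 1$; everything else is bookkeeping with \Ref{action} and the relations defining $W_m$.
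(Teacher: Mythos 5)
The paper itself gives no proof of this lemma (it is quoted from \cite{CP}), so your proposal is judged on its own terms. Its skeleton is the standard one: the image $\bar{\bs 1}$ of $v_+^{\otimes n}\otimes 1$ in $\V^S/I^\V_{\bs a}$ satisfies $e_{12}(u)\bar{\bs 1}=e_{22}(u)\bar{\bs 1}=0$, $e_{11}(u)\bar{\bs 1}=\sum_{s}\frac{n_s}{u-b_s}\,\bar{\bs 1}$, $(e_{21}\otimes 1)^{n+1}\bar{\bs 1}=0$; invoke Chari--Pressley's Weyl-module theory for the Drinfeld polynomial $\prod_s(u-b_s)^{n_s}$; and match dimensions $2^n=2^n$. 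Deferring the two genuinely hard inputs --- cyclicity and the structure of the Weyl module --- to \cite{CP} is acceptable, since the paper defers the whole lemma there; and your dimension count for the quotient is fine once one notes $I^\V_{\bs a}$ equals the product of $\V^S$ with the ideal of $\C[z_1,\dots,z_n]^S$ generated by $\si_s(\bs z)-a_s$ (average over $S_n$, using that these generators are symmetric), so that Lemma \ref{VSfree} gives $\dim\V^S/I^\V_{\bs a}=2^n$.

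The genuine flaw is the direction of your map $\phi$. Knowing that two cyclic generators satisfy the same \emph{list} of relations produces a homomorphism in neither direction: a map sending generator to generator exists iff the annihilator of the source generator is contained in that of the target generator. What the Weyl-module formalism actually gives is the arrow opposite to yours: by \cite{CP}, $\otimes_{s=1}^k W_{n_s}(b_s)$ is the \emph{maximal} cyclic module whose generator satisfies exactly those relations (the quotient of $U(\gltt)$ by the left ideal they generate), so the universal property yields a surjection from $\otimes_{s=1}^k W_{n_s}(b_s)$ onto the $\gltt$-submodule of $\V^S/I^\V_{\bs a}$ generated by $\bar{\bs 1}$, not a surjection from $\V^S/I^\V_{\bs a}$ onto the tensor product; your $\phi$ would require $\Ann(\bar{\bs 1})\subset\Ann(v_{n_1}\otimes\dots\otimes v_{n_k})$, which is clear only a posteriori. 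The repair keeps your architecture: take the corrected arrow, use the cyclicity of $\V^S/I^\V_{\bs a}$ over $\bar{\bs 1}$ (the ingredient you rightly flagged; it must be cited or proved, since with the corrected arrow equal dimensions alone give no contradiction if the map fails to be surjective) to see the arrow is onto, and then your count $\dim\otimes_{s=1}^k W_{n_s}(b_s)=2^n$ (Lemma \ref{weyl}(ii)) versus $\dim\V^S/I^\V_{\bs a}=2^n$ forces it to be an isomorphism.
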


\subsection{Grading on $\V^S$}
\label{sec grading on V}

Let $V^{\otimes n} =\oplus_{k=0}^n V^{\otimes n}[n-k,k]$ be the
$\glt$-weight decomposition. Define a grading on $V^{\otimes n}$ by
setting $\deg v = -k$ for any $v\in V^{\otimes n}[n-k,k]$.
Define a
grading on $\C[z_1,\dots,z_n]$ by setting $\deg z_i=1$ for all
$i=1,\dots,n$.
Define a grading on $\V$ by setting $\deg(v\otimes
p)=\deg v +\deg p$ for any $v\in V^{\otimes n}$ and
$p\in\C[z_1,\dots,z_n]$. The grading on $\V$ induces a grading
on $\V^S$ and $\End(\V^S)$.

\begin{lem}[\cite{CP}]
\label{cycl grad}
The $\gltt$-action on \/ $\V^S$ is graded.
\qed
\end{lem}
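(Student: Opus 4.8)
The plan is to reduce the claim to an elementary degree count for each of the four standard generators of $\glt$ acting through the explicit formula \Ref{action}. First I would expand the generating series: substituting $\frac1{u-z_s}=\sum_{r\ge0}z_s^r u^{-r-1}$ into \Ref{action} and reading off the coefficient of $u^{-r-1}$ gives
\be
(e_{ij}\otimes t^r)\bigl(p\,v_1\otimes\dots\otimes v_n\bigr)\,=\,\sum_{s=1}^n z_s^r\,p\;v_1\otimes\dots\otimes e_{ij}v_s\otimes\dots\otimes v_n
\ee
for any $p\in\C[z_1,\dots,z_n]$ and $v_1,\dots,v_n\in V$. Thus $e_{ij}\otimes t^r$ acts as the sum over $s$ of the composite of ``apply $e_{ij}$ in the $s$-th tensor slot of $V^{\otimes n}$'' with ``multiply by $z_s^r$''.

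Next I would check that these two elementary operations are homogeneous of the expected degrees. Restricting the grading on $V^{\otimes n}$ from Section~\ref{sec grading on V} to $V$, the highest weight vector $v_+\in V[1,0]$ has degree $0$ and a nonzero lowest weight vector $v_-\in V[0,1]$ has degree $-1$; hence each $e_{ij}$ acts on $V$ as a homogeneous operator of degree $j-i$ (the diagonal elements $e_{11},e_{22}$ have degree $0$, while $e_{12}\colon v_-\mapsto v_+$ has degree $+1$ and $e_{21}\colon v_+\mapsto v_-$ has degree $-1$). So the slot-$s$ operator is homogeneous of degree $j-i$ on $V^{\otimes n}$, and multiplication by $z_s^r$ is homogeneous of degree $r$ on $\C[z_1,\dots,z_n]$; combining, $e_{ij}\otimes t^r$ raises the degree on $\V$ by $r+j-i$, which is exactly its degree in $\gltt$. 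Hence the $\gltt$-action on $\V$ is compatible with the grading.

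Finally I would pass to $\V^S$. The $S_n$-action permutes the $z_i$'s and the tensor factors of $V^{\otimes n}$ simultaneously, so it preserves total degree; therefore the grading on $\V$ restricts to $\V^S$ and the compatible $\gltt$-action restricts along with it. It remains to note that the grading is bounded from below with finite-dimensional homogeneous components: the $V^{\otimes n}$-part contributes a degree in $\{-n,\dots,0\}$ and the polynomial part a nonnegative degree, so $\V_j=\V^S_j=0$ for $j<-n$ and each homogeneous component is a finite sum of pieces $V^{\otimes n}[n-k,k]\otimes\bigl(\text{polynomials of a fixed degree}\bigr)$, each finite-dimensional. There is no genuine obstacle here; the one point that needs care is the sign bookkeeping, namely matching $\deg(e_{ij}\otimes t^r)=r+j-i$ against the convention $\deg v=-k$ on $V^{\otimes n}[n-k,k]$, which is precisely what makes the two degrees coincide rather than differ by the sign in $j-i$.
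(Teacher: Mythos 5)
Your proof is correct. The paper itself gives no argument for this lemma beyond the citation to [CP] and an immediate \qed, and your computation is exactly the straightforward verification that is implicitly being invoked: expanding $g(u)$ and $\tfrac1{u-z_s}$ in powers of $u^{-1}$ to see that $e_{ij}\otimes t^r$ acts as $\sum_s z_s^r\,e_{ij}^{(s)}$, checking that $e_{ij}$ shifts the weight-grading on $V^{\otimes n}$ by $j-i$ while $z_s^r$ shifts the polynomial grading by $r$ (matching $\deg(e_{ij}\otimes t^r)=r+j-i$), and noting that the $S_n$-action preserves degree so everything restricts to $\V^S$, with the grading bounded below by $-n$.
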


\section{Bethe algebra}
\label{Bethe sec}

\subsection{Definition}
\label{bethesec}
Let $\bs K=(K_{ij})$ be a $2\times 2$-matrix with complex coefficients.
Consider the series
\be
B_i^{\bs K}(u)\,=\,\sum_{j=0}^\infty B_{ij}^{\bs K}\>u^{-j}\,,\qquad
i=1,2\,,
\vv.2>
\ee
where $B_{ij}^{\bs K}\in U(\gltt)$, defined by the formulae
\vvn.3>
\begin{gather*}
B^{\bs K}_1(u)\,=\,K_{11}+K_{22}-e_{11}(u)-e_{22}(u) \;,
\\[4pt]
B^{\bs K}_2(u)\,=\,\bigl(K_{11}+e_{11}(u)\bigr)\>\bigl(K_{22}+e_{22}(u)\bigr)
-\bigl(K_{12}+e_{21}(u)\bigr)\>\bigl(K_{21}+e_{12}(u)\bigr) - e_{22}'(u)\;,
\\[-10pt]
\end{gather*}
where $\;'$ stands for the derivative $d/du$. We call the unital subalgebra of
$U(\gltt)$ generated by $B_{ij}^{\bs K} $, $i=1,2$, $j\in\Z_{\geq 0}$,
the {\it Bethe algebra\/} associated with the matrix $\bs K$ and denote it
$\B^{\bs K}$. The elements $B_{ij}^{\bs K}$ will be called the {\it standard
generators} of $\B^{\bs K}$.
\begin{thm}
\label{T-thm}
For any matrix $\bs K$, the algebra $\B^{\bs K}$ is commutative.
If $\bs K$ is the zero matrix, then $\B^{\bs K}$ commutes with
the subalgebra $U(\glt)\subset U(\gltt)$.
\end{thm}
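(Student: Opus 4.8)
The plan is to establish the two assertions separately, using the generating-series form of the relations in $\gltt$ that is recorded just after the definition of $g(u)$, namely
\[
(u-v)\,[e_{ij}(u),e_{sk}(v)]\,=\,\dl_{js}\bigl(e_{ik}(u)-e_{ik}(v)\bigr)-\dl_{ik}\bigl(e_{sj}(u)-e_{sj}(v)\bigr).
\]
For commutativity of $\B^{\bs K}$, the cleanest route is to show that the two generating series commute in the strong sense that
\[
[B_1^{\bs K}(u),\,B_1^{\bs K}(v)]\,=\,[B_1^{\bs K}(u),\,B_2^{\bs K}(v)]\,=\,[B_2^{\bs K}(u),\,B_2^{\bs K}(v)]\,=\,0
\]
as formal series in $u^{-1},v^{-1}$ with coefficients in $U(\gltt)$; extracting coefficients then gives $[B_{ij}^{\bs K},B_{i'j'}^{\bs K}]=0$ for all indices, which is exactly what is needed since the $B_{ij}^{\bs K}$ generate $\B^{\bs K}$. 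The first bracket is immediate because $B_1^{\bs K}(u)=K_{11}+K_{22}-e_{11}(u)-e_{22}(u)$ and $e_{11}(u)+e_{22}(u)$ is (the generating series of) the central subalgebra $\z_2[t]$, as stated in Section~\ref{current}; hence it commutes with everything, in particular with $B_2^{\bs K}(v)$ as well. So the entire content is the relation $[B_2^{\bs K}(u),B_2^{\bs K}(v)]=0$.

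To handle $[B_2^{\bs K}(u),B_2^{\bs K}(v)]$ I would expand $B_2^{\bs K}(u)$ as
\[
B_2^{\bs K}(u)=\bigl(K_{11}+e_{11}(u)\bigr)\bigl(K_{22}+e_{22}(u)\bigr)-\bigl(K_{12}+e_{21}(u)\bigr)\bigl(K_{21}+e_{12}(u)\bigr)-e_{22}'(u),
\]
multiply $[B_2^{\bs K}(u),B_2^{\bs K}(v)]$ through by $(u-v)^2$ (or rather keep careful track of the $e'(u)$ terms, which arise precisely as the $v\to u$ limit pieces of the basic relation), and expand into a sum of commutators of the elementary series $e_{ij}(u)$, $e_{sk}(v)$, each weighted by $K$-dependent constants. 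Each such commutator is rewritten by the displayed relation above, and one checks that after collecting terms everything cancels. The structurally honest way to see this — and the way I would present it — is to recognize $B_2^{\bs K}(u)$ as the relevant coefficient of the "column determinant'' of the $2\times 2$ matrix differential operator $\bigl(\delta_{ij}\,d/du\,\text{-shifted}\bigr)$ built from $K_{ij}+e_{ji}(u)$; commutativity is then the classical statement that the coefficients of such a (quantum/Manin) determinant for the current algebra $\gltt$ Poisson/associatively commute. If the paper wishes to be self-contained, one just does the bracket computation directly: it is a finite, if tedious, manipulation with no hidden difficulty once the derivative terms are bookkept correctly. This cancellation is the one genuine computation in the proof, and it is the step I expect to be the main obstacle — not conceptually, but in organizing the $e'(u)$ contributions so that the coincident-point terms match up.

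For the second assertion, assume $\bs K=0$, so that
\[
B_1^0(u)=-e_{11}(u)-e_{22}(u),\qquad B_2^0(u)=e_{11}(u)e_{22}(u)-e_{21}(u)e_{12}(u)-e_{22}'(u).
\]
I must show each $B_{ij}^0$ commutes with $e_{pq}\in\glt=\glt\otimes 1\subset\gltt$. Equivalently, using the series, it suffices to prove $[e_{pq},B_1^0(u)]=0$ and $[e_{pq},B_2^0(u)]=0$ for all $p,q$. Again $B_1^0(u)$ is central in $\gltt$, so that case is free. For $B_2^0(u)$, note that $e_{pq}=e_{pq}\otimes 1$ acts on the series $e_{ij}(u)=\sum_s (e_{ij}\otimes t^s)u^{-s-1}$ by $[e_{pq},e_{ij}(u)]=\dl_{qi}\,e_{pj}(u)-\dl_{pj}\,e_{iq}(u)$ (the $s$-th bracket is $[e_{pq},e_{ij}\otimes t^s]=(\dl_{qi}e_{pj}-\dl_{pj}e_{iq})\otimes t^s$), and this action commutes with $d/du$, so $[e_{pq},e_{22}'(u)]=\bigl([e_{pq},e_{22}(u)]\bigr)'$. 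Thus $\mathrm{ad}(e_{pq})$ acts on the expression for $B_2^0(u)$ exactly as the adjoint action of $\glt$ on the universal enveloping algebra acts on the Casimir-type element $e_{11}e_{22}-e_{21}e_{12}$ (plus first-order terms), and the claimed vanishing follows because $B_2^0(u)$ is the current-algebra analogue of the second Gelfand invariant: a short direct check of the four cases $(p,q)\in\{(1,2),(2,1),(1,1),(2,2)\}$ confirms $[e_{pq},B_2^0(u)]=0$. Taking coefficients in $u^{-1}$ then gives $[e_{pq},B_{2j}^0]=0$ for all $j$, and since the $B_{ij}^0$ generate $\B^0$ while $U(\glt)$ is generated by the $e_{pq}$, we conclude $[\B^0,U(\glt)]=0$, completing the proof.
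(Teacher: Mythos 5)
Your proposal is correct and follows what the paper intends: the paper's own proof is literally the single word ``Straightforward,'' i.e.\ a direct verification, and your route --- observing that $B_1^{\bs K}(u)$ lies in (a shift of) the central series of $\z_2[t]$, reducing everything to the bracket $[B_2^{\bs K}(u),B_2^{\bs K}(v)]=0$, and checking $\mathrm{ad}(e_{pq})B_2^0(u)=0$ via the coincident-point relations, where the $-e_{22}'(u)$ term supplies exactly the needed cancellation --- is that verification spelled out. The one step you leave as a routine computation, $[B_2^{\bs K}(u),B_2^{\bs K}(v)]=0$, is likewise not carried out in the paper (it is the standard commutativity fact from the cited references \cite{T}, \cite{CT}, \cite{MTV1}), so your treatment matches the paper's level of detail and contains no gap in substance.
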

\begin{proof}
Straightforward.
\end{proof}

Let $\der$ be the operator of differentiation with respect to a variable $u$.
An important object associated with the Bethe algebra is the {\it universal
differential operator\/}
\be
\D^{\bs K}=\,\der^2+ B_1^{\bs K}(u)\der + B_2^{\bs K}(u)\;,
\vv-.5>
\ee
see \cite{T}, \cite{CT}, \cite{MTV1}. It is a differential operator with
respect to the variable $u$.

\medskip
If $M$ is a $\B^{\bs K}$-module, we call the image of
$\B^{\bs K}$ in $\End (M)$ the {\it Bethe algebra} of $M$.
The {\it
universal differential operator }of a $\B^{\bs K}$-module $M$ is the
differential operator
\bea
\D\ =\ \der^2+ \bar B_1(u)\der + \bar B_2(u)\ ,
\qquad
\bar B_i(u)\,=\,\sum_{j=0}^\infty \,(B_{ij}^{\bs K})|_M\,u^{-j}\ .
\eea

It is an interesting problem to describe the algebra $\B^{\bs K}$.
In this paper we will consider the cases
\vvn-.3>
\beq
\label{ex of matrices}
\bs K =
\left( \begin{matrix}
0 & 0\\
0 & 0
\end{matrix}\right)
\qquad
{\rm and }
\qquad
\bs K =
\left( \begin{matrix}
{} 0 & 0\\
-1 & 0
\end{matrix}\right)
\vv.2>
\eeq
and will compare the corresponding objects
$\D^{\bs K}, \B^{\bs K}, B_{ij}^{\bs K}$, etc.
The objects associated with the zero
matrix $\bs K$ will be denoted $\D^{0}, \B^{0}, B_{ij}^{0}$, etc.,
while the objects associated with the nonzero matrix $\bs K$
in \Ref{ex of matrices} will be denoted $\D, \B, B_{ij}$, etc.

We have
\vvn-.5>
\begin{align*}
B_1^0(u)\,&{}=\,B_1(u)\,=\,- e_{11}(u) - e_{22}(u) \;,
\\[4pt]
B_2^0(u)\,&{}=\,
e_{11}(u)e_{22}(u) - e_{21}(u)e_{12}(u) - e_{22}'(u) \;,
\\[4pt]
B_2(u)\,&{}=\,B_2^0(u)+e_{21}(u)
\\[-18pt]
\end{align*}
Writing \,$B_i^0(u) \,=\>\sum_{j} B_{ij}^0u^{-j}$ \,and
\,$B_i(u),=\>\sum_{j} B_{ij}u^{-j}$\>, we have
\vvn.2>
\beq
\label{nilp formula}
B_{1,j}\,=\,B^0_{1,j}\;, \qquad
B_{2,j}\,=\,B^0_{2,j} + e_{21}\otimes t^{j-1}\;,
\vv.2>
\eeq
for all $j$. Note that the elements
\vvn.4>
\beq
\label{Bii}
B_{11}^0=-e_{11}-e_{22}\qquad\text{and}\qquad
B_{22}^0=(e_{11}+1)e_{22}-e_{21}e_{12}
\vv.3>
\eeq
belong to the center of the subalgbra $U(\glt)$.

\subsection{Actions of $\B$ and $\B^0$ on $L_{\bs\la}(b)$}
\label{sec action on one mod}
For $b\in\C$ and $\bs\la = (n-k,k)$, consider the action of
the Bethe algebras
$\B$ and $\B^0$ on the evaluation module $L_{\bs\la}(b)$.

\begin{lem}
\label{lem on one module}
${}$

\begin{enumerate}
\item[(i)]
The image of $\B^0$ in $\End(L_{\bs\la})$ is
the subalgebra of scalar operators.
\item[(ii)]
The image of $\B$ in $\End(L_{\bs\la})$ is
the unital subalgebra generated by the element $e_{21}|_{L_{\bs\la}}$.
\end{enumerate}
\end{lem}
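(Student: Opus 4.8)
The plan is to analyze the action of the standard generators on the evaluation module $L_{\bs\la}(b)$ using the explicit formulas \Ref{nilp formula}, reducing everything to the action of $\glt$ on $L_{\bs\la}$.

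First I would handle part (i). On the evaluation module $L_{\bs\la}(b)$, each series $g(u)$ acts as $g\,(u-b)^{-1}$ for $g\in\glt$. Hence $B_1^0(u)$ acts as $-(e_{11}+e_{22})(u-b)^{-1}$ and $B_2^0(u)$ acts as the rational function in $u$ with operator coefficients built from $e_{11}e_{22}-e_{21}e_{12}$ and the scalar coming from $e_{22}'(u)$. By \Ref{Bii}, the relevant operators $e_{11}+e_{22}$ and $(e_{11}+1)e_{22}-e_{21}e_{12}$ are central in $U(\glt)$, hence act as scalars on the irreducible module $L_{\bs\la}$; the remaining $e_{22}'(u)$ term contributes only scalars. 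So expanding $B_i^0(u)$ in powers of $u^{-1}$, every coefficient $B^0_{ij}|_{L_{\bs\la}}$ is a scalar operator, which gives (i). (One should note the Bethe algebra is unital by definition, so the image is precisely the scalars.)

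For part (ii), I would use \Ref{nilp formula}: $B_{1,j}=B^0_{1,j}$ and $B_{2,j}=B^0_{2,j}+e_{21}\otimes t^{j-1}$. By (i), the contributions $B^0_{1,j}|_{L_{\bs\la}}$ and $B^0_{2,j}|_{L_{\bs\la}}$ are scalars, so the image of $\B$ in $\End(L_{\bs\la})$ is the unital subalgebra generated by the operators $(e_{21}\otimes t^{j-1})|_{L_{\bs\la}}$ for $j\ge 1$. On the evaluation module, $e_{21}\otimes t^{j-1}$ acts as $b^{j-1}e_{21}|_{L_{\bs\la}}$ (where $b$ is the evaluation parameter; for $b=0$ only the $j=1$ term survives, but the statement is for general $b$ and in either case the subalgebra generated is the same). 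Therefore the image of $\B$ is exactly the unital subalgebra of $\End(L_{\bs\la})$ generated by $e_{21}|_{L_{\bs\la}}$, which is (ii).

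The main obstacle — really the only subtle point — is making sure nothing besides the central elements and $e_{21}$ leaks into the image: one must check that the coefficient of every negative power of $u$ in $B_2^0(u)$, after evaluation, is built only from the two central generators of $U(\glt)$ listed before Lemma \ref{lem on one module} plus scalars from $e_{22}'(u)$, and does not accidentally produce, say, a multiple of $e_{21}$ or some other non-central element. This follows from writing $B_2^0(u)|_{L_{\bs\la}(b)}=(u-b)^{-2}\bigl(e_{11}e_{22}-e_{21}e_{12}\bigr)+(u-b)^{-2}$-type scalar corrections and recognizing $e_{11}e_{22}-e_{21}e_{12}=\bigl((e_{11}+1)e_{22}-e_{21}e_{12}\bigr)-e_{22}$ — the first summand is central, and $e_{22}$ is not central, so one must instead group it with $e_{11}e_{22}$ correctly; the cleanest route is simply to observe that $B^0_{2,j}$ lies in the image of $\B^0$, invoke (i) directly rather than re-deriving it from $\glt$-generators, and thereby sidestep the bookkeeping entirely.
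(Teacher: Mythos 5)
Your proof is correct, but it runs along a more computational route than the paper's. For (i) the paper simply cites Theorem \ref{T-thm} ($\B^0$ commutes with $U(\glt)$) and Schur's lemma, with no evaluation of the series at all; you instead evaluate $B^0_1(u)$ and $B^0_2(u)$ on $L_{\bs\la}(b)$ and reduce to the two central elements in \Ref{Bii}. That works, but note your intermediate claim that ``the remaining $e_{22}'(u)$ term contributes only scalars'' is literally false: on the evaluation module $-e_{22}'(u)$ acts as $e_{22}\,(u-b)^{-2}$, and it is only after absorbing this $e_{22}$ into $e_{11}(u)e_{22}(u)-e_{21}(u)e_{12}(u)$ that one gets the central element $(e_{11}+1)e_{22}-e_{21}e_{12}$ times $(u-b)^{-2}$ --- a point you do flag and repair in your last paragraph, so this is a presentational wobble rather than a gap (your suggestion to ``invoke (i)'' can of course only be used for part (ii), not to establish (i) itself). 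For (ii) the paper argues structurally: the image of $\B$ is commutative and contains $B_{21}|_{L_{\bs\la}}=e_{21}|_{L_{\bs\la}}$, which acts on the irreducible $L_{\bs\la}$ as a single Jordan block, so the image sits inside the centralizer of a regular nilpotent, i.e.\ inside $\C[e_{21}|_{L_{\bs\la}}]$. You instead use \Ref{nilp formula} and part (i) to see that every generator $B_{2,j}$ acts as a scalar plus $b^{j-1}e_{21}|_{L_{\bs\la}}$, so the image is generated by $e_{21}|_{L_{\bs\la}}$ directly. Your version buys independence from the centralizer-of-a-nilpotent fact and from commutativity of $\B$, at the cost of the explicit evaluation bookkeeping; the paper's version is shorter and does not depend on the precise shape of $B^0_2(u)$.
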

\begin{proof}
Part (i) follows from Schur's lemma and the fact that $\B^0$ commutes with
$U(\glt)$.
Part (ii) follows from commutativity of $\B$
and the fact that the image of $B_{21}$ in
$\End(L_{\bs\la})$ equals the image of $e_{21}$.
\end{proof}

\begin{cor}
\label{cor on A_d}
The map $B_{21}|_{L_{\bs\la}} \mapsto b$ defines an isomorphism of the image of
$\B$ in $\End(L_{\bs\la})$ and the algebra $\C[b]/\langle b^{n-2k+1}\rangle$
\qed
\end{cor}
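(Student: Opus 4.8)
The plan is to deduce the corollary from Lemma \ref{lem on one module}(ii) together with standard facts about the action of $e_{21}$ on the irreducible $\glt$-module $L_{\bs\la}$. First I would recall that for $\bs\la=(n-k,k)$ with $n-k\ge k$, the module $L_{\bs\la}$ decomposes under the $\slt$-copy generated by $e_{11}-e_{22},e_{12},e_{21}$ as the irreducible $\slt$-module of highest weight $(n-k)-k=n-2k$, i.e.\ it has dimension $n-2k+1$, with weight spaces $L_{\bs\la}[\,\bs\la-(j,-j)\,]$ for $j=0,\dots,n-2k$ each one-dimensional. The lowering operator $e_{21}$ maps the $j$-th weight space isomorphically onto the $(j+1)$-st for $j=0,\dots,n-2k-1$ and annihilates the lowest one, so as an operator on $L_{\bs\la}$ it is nilpotent with $(e_{21}|_{L_{\bs\la}})^{n-2k}\ne 0$ and $(e_{21}|_{L_{\bs\la}})^{n-2k+1}=0$; in fact its single Jordan block has size $n-2k+1$.

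Next I would invoke Lemma \ref{lem on one module}(ii): the image of $\B$ in $\End(L_{\bs\la})$ is precisely the unital subalgebra generated by $N:=e_{21}|_{L_{\bs\la}}$, which equals $\C[N]=\{p(N):p\in\C[x]\}$. Since $N^{n-2k}\ne 0$ but $N^{n-2k+1}=0$, the minimal polynomial of $N$ is $x^{n-2k+1}$, so evaluation at $N$ gives a well-defined surjective algebra homomorphism $\C[x]\to\C[N]$ with kernel exactly the ideal $\langle x^{n-2k+1}\rangle$. Renaming the variable $b$, this is an isomorphism $\C[b]/\langle b^{n-2k+1}\rangle\xrightarrow{\sim}\C[N]$ sending the class of $b$ to $N=e_{21}|_{L_{\bs\la}}$. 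Its inverse sends $e_{21}|_{L_{\bs\la}}$ to $b$; and since (by the proof of Lemma \ref{lem on one module}(ii)) the image of $B_{21}$ in $\End(L_{\bs\la})$ equals the image of $e_{21}$, the map $B_{21}|_{L_{\bs\la}}\mapsto b$ is exactly this inverse isomorphism, which is the claim of the corollary.

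I do not expect any real obstacle here; the content is entirely contained in the preceding lemma plus the elementary representation theory of $\slt$. The one point that deserves a sentence of care is the inequality $n-k\ge k$ (equivalently $n-2k\ge 0$), which is in force because $\bs\la=(n-k,k)$ is assumed to be a partition, so that the Jordan block of $e_{21}$ on $L_{\bs\la}$ genuinely has size $n-2k+1$ and the exponent in $\langle b^{n-2k+1}\rangle$ is the correct nilpotency degree rather than an overestimate. Everything else—surjectivity of $\C[x]\to\C[N]$, identification of the kernel with the ideal generated by the minimal polynomial—is the standard description of the algebra generated by a single operator, so the proof is a short paragraph.
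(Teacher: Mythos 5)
Your proposal is correct and follows exactly the route the paper intends: the corollary is stated with \qed as an immediate consequence of Lemma \ref{lem on one module}(ii), plus the standard fact that $e_{21}|_{L_{\bs\la}}$ is a single nilpotent Jordan block of size $n-2k+1$ (since $L_{(n-k,k)}$ restricts to the irreducible $\slt$-module of highest weight $n-2k$), so the algebra it generates is $\C[b]/\langle b^{n-2k+1}\rangle$. Your added care about $n-2k\ge 0$ and the identification of $B_{21}$ with $e_{21}$ on the evaluation module is exactly the content the paper leaves implicit.
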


\section {Actions of $\B^0$ and $\B$ on $\V^S$}
\label{sec actionss}
\subsection{Gradings on $\B$ and $\B^0$}
In Section \ref{current}, we introduced a grading on $\gltt$ such that
$\deg\,e_{ij}\otimes t^r = r+j-i$ for all $i,j,r$.

\begin{lem}
\label{lem Bethe degree}
For any $(i,j)$, the elements $B_{ij}^0, B_{ij}\in U(\gltt)$ are
homogeneous of degree $j-i$. \qed
\end{lem}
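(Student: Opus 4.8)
The plan is to reduce the statement to a routine degree count. Recall from Section \ref{current} that $\deg(e_{ij}\otimes t^r)=r+j-i$ defines a Lie-algebra grading on $\gltt$ (it is compatible with the bracket), so it induces an algebra grading on $U(\gltt)$ under which a product of homogeneous elements is homogeneous of degree the sum of the degrees.

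First I would record the graded behaviour of the generating series $g(u)$. With $\deg u=1$, the term $(e_{ij}\otimes t^s)\,u^{-s-1}$ appearing in $e_{ij}(u)$ has degree $(s+j-i)-(s+1)=j-i-1$, independent of $s$; hence $e_{ij}(u)$ is homogeneous of degree $j-i-1$, as already noted in the excerpt. Since differentiation in $u$ lowers degree by $1$, the series $e_{22}'(u)$ is homogeneous of degree $-2$. I would then feed this into the explicit formulas for the Bethe generating series. For both matrices in \Ref{ex of matrices} one has $K_{11}=K_{22}=0$, so $B_1^{\bs K}(u)=-e_{11}(u)-e_{22}(u)$ is homogeneous of degree $-1$. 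For $i=2$, the three summands $e_{11}(u)e_{22}(u)$, $e_{21}(u)e_{12}(u)$ and $e_{22}'(u)$ of $B_2^0(u)$ have degrees $(-1)+(-1)$, $(-2)+0$ and $-2$, so $B_2^0(u)$ is homogeneous of degree $-2$; and since $B_2(u)=B_2^0(u)+e_{21}(u)$ and $e_{21}(u)$ has degree $-2$, the series $B_2(u)$ is homogeneous of degree $-2$ as well.

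Finally I would extract coefficients: if a series $f(u)=\sum_j f_j u^{-j}$ is homogeneous of degree $d$, then every nonzero $f_j$ is homogeneous of degree $d+j$. Taking $d=-1$ for $i=1$ and $d=-2$ for $i=2$ yields $\deg B_{ij}^0=\deg B_{ij}=j-i$ in all cases (the coefficients with $j<i$ vanish, so there is nothing to check for them). Equivalently, for $i=2$ one may invoke \Ref{nilp formula}, $B_{2j}=B_{2j}^0+e_{21}\otimes t^{j-1}$, and observe that $\deg(e_{21}\otimes t^{j-1})=(j-1)+1-2=j-2$ matches $\deg B_{2j}^0$. I do not expect any real obstacle; the only points worth a line of justification are that the grading descends from $\gltt$ to $U(\gltt)$ and that $d/du$ interacts with the grading as expected once $\deg u=1$ is fixed.
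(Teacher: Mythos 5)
Your degree count is correct and is exactly the routine verification the paper leaves implicit (the lemma is stated without proof): with $\deg(e_{ij}\otimes t^r)=r+j-i$ and $\deg u=1$, each series $B_i^0(u)$, $B_i(u)$ is homogeneous of degree $-i$, so the coefficient of $u^{-j}$ is homogeneous of degree $j-i$. The only slip is your parenthetical claim that all coefficients with $j<i$ vanish — for the nilpotent $\bs K$ one has $B_{21}=e_{21}\otimes 1\neq 0$ — but this case is covered by the same count, since $\deg(e_{21}\otimes 1)=-1=j-i$, so the argument stands.
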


By Lemma \ref{lem Bethe degree},
the grading on $\gltt$ induces a grading on
$\B^0$ and $\B$.

\medskip

As subalgebras of $U(\gltt)$, the algebras $\B^0$ and $\B$ act on any
$\gltt$-module $M$. Consider the $\gltt$-module $\V^S$ graded as in
Section \ref{sec grading on V}.

\begin{lem}
The actions of $\B^0$ and $\B$ on $\V^S$ are graded.
\qed
\end{lem}

Denote $\Bv$ (resp. $\Bvz$) the image of the Bethe algebra $\B$
(resp. $\B^0$) in $\End(\V^S)$.

\begin{lem}
\label{Uz}
Each of the Bethe algebras $\Bv$ and $\Bvz$
contains the algebra of operators of multiplication
by elements of\/ $\C[z_1,\dots,z_n]^S$.
\end{lem}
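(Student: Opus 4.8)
The plan is to show that the subalgebra $\C[z_1,\dots,z_n]^S$, acting on $\V^S$ by multiplication, is contained in the image of each Bethe algebra. Since $\V^S\subset\V = V^{\otimes n}\otimes\C[z_1,\dots,z_n]$ and the Bethe generators act via the series \Ref{action}, the idea is to extract the symmetric functions $\si_s(\bs z)$ from the Bethe generators and then invoke the fact that $\C[z_1,\dots,z_n]^S$ is generated by $\si_1(\bs z),\dots,\si_n(\bs z)$.

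First I would look at the simplest Bethe generators coming from $B_1^{\bs K}(u)$. From \Ref{nilp formula} we have $B_{1,j}=B_{1,j}^0$ for both choices of $\bs K$, and $B_1^0(u)=-e_{11}(u)-e_{22}(u)$. Applying this to a vector $p(\bs z)\,v_1\otimes\dots\otimes v_n$ via \Ref{action}, the operator $e_{11}(u)+e_{22}(u)$ acts as $\sum_{s=1}^n \frac{(e_{11}+e_{22})v_s}{u-z_s}$; but $e_{11}+e_{22}$ acts as the scalar $1$ on $V$ (every basis vector of the vector representation has weight $(1,0)$ or $(0,1)$, so $e_{11}+e_{22}$ is the identity on $V$). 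Hence on all of $\V$ we get
\be
B_1^{\bs K}(u)\,=\,-\sum_{s=1}^n\frac1{u-z_s}\,=\,-\der_u\log\prod_{s=1}^n(u-z_s)\,=\,-\frac{d}{du}\log\Bigl(u^n+\sum_{j=1}^n(-1)^j\si_j(\bs z)\,u^{n-j}\Bigr).
\ee
Expanding the right-hand side as a series in $u^{-1}$, the coefficients are explicit polynomials in $\si_1(\bs z),\dots,\si_n(\bs z)$; in fact $-B_1^{\bs K}(u) = \sum_{j\ge1} p_j\,u^{-j}$ where $p_j=\sum_{s=1}^n z_s^j$ is the $j$-th power sum. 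Thus the operators $B_{1,1},\dots,B_{1,n}$ are multiplication by $-p_1,\dots,-p_n$, and since the power sums $p_1,\dots,p_n$ generate $\C[z_1,\dots,z_n]^S$ over $\C$ (characteristic zero), the whole algebra of multiplications by $\C[z_1,\dots,z_n]^S$ lies in both $\Bv$ and $\Bvz$.

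I expect no serious obstacle here: the only point requiring care is the identity $e_{11}+e_{22}=\mathrm{id}_V$ on the vector representation, which is immediate, plus the elementary symmetric-function fact that power sums generate the ring of symmetric polynomials in characteristic zero (Newton's identities). One should also note that these coefficients $B_{1,j}$ act on the $S_n$-invariant subspace $\V^S$ as multiplication operators and commute with the $S_n$-action, so the restriction to $\V^S$ still is multiplication by the symmetric polynomial $-p_j$; this is automatic since multiplication by a symmetric polynomial preserves $\V^S$. Hence both $\Bv$ and $\Bvz$ contain the subalgebra in question, which is exactly the claim.
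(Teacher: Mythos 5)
Your argument is essentially the paper's own proof: the coefficients of $B_1(u)=B_1^0(u)$, which are common to $\B$ and $\B^0$, act on $\V^S$ as multiplication by (minus) power sums because $e_{11}+e_{22}$ is the identity on $V$, and the power sums generate $\C[z_1,\dots,z_n]^S$ in characteristic zero. The only blemish is a harmless indexing slip: since $\frac{1}{u-z_s}=\sum_{j\ge1}z_s^{j-1}u^{-j}$, the coefficient $B_{1,j}$ acts as multiplication by $-p_{j-1}$ rather than $-p_j$, which does not affect the conclusion.
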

\begin{proof}
An element $B_{1j}=B_{1j}^0=
e_{11}\otimes t^{j-1} + e_{22}\otimes t^{j-1}$
acts on $\V^S$ as the operator
of multiplication by $\sum_{s=1}^n z_s^{j-1}$.
\end{proof}

For $i=1,\dots,n$, let $\sigma_i$ denote the $i$-th elementary symmetric
function of $z_1,\dots,z_n$. We have $\C[\sigma_1,\dots,\sigma_n]=
\C[z_1,\dots,z_n]^S$. The embeddings in Lemma \ref{Uz} of
$\C[\sigma_1,\dots,\sigma_n]$ to $\Bv$ and $\Bvz$ provide
$\Bv$ and $\Bvz$ with structures of
$\C[\sigma_1,\dots,\sigma_n]$-modules.

\subsection{Weight, isotypical and graded decompositions of $\V^S$}
As a $\C[z_1,\dots,z_n]^S$-module, $\V^S$ has the form
\beq
\label{glt-C^S module}
\V^S\ \simeq \ V^{\otimes n} \otimes \C[z_1,\dots,z_n]^S\ .
\eeq
This is an isomorphism of $\glt$-modules, if $\glt$
acts on $\C[z_1,\dots,z_n]^S$ trivially
and acts on $V^{\otimes n}$
in the standard way.

The $\glt$-weight decomposition of $\V^S$ has the form
\vvn.3>
\beq
\label{weight dec}
\V^S\ = \
\oplus_{m=0}^n \V^S[n-m,m]\ \simeq \
\oplus_{m=0}^n V^{\otimes n}[n-m,m]
\otimes \C[z_1,\dots,z_n]^S\ .
\vv.3>
\eeq
We say that a weight $(n-m,m)$ is lower than a weight $(n-m',m')$ if
$n-m<n-m'$.

\medskip
Consider the decomposition of $\V^S$ into isotypical components of the
$\glt$-action,
\vvn.3>
\beq
\label{iso decom}
\V^S\ = \ \oplus_{\bs\la} \,
\V^{S,0}_{\bs\la}\
\simeq \
\oplus_{\bs\la} \,
(V^{\otimes n})_{\bs\la} \otimes \C[z_1,\dots,z_n]^S\ ,
\vv.3>
\eeq
where $\V^{S,0}_{\bs\la}$, $(V^{\otimes n})_{\bs\la}$
are the isotypical components corresponding
to the irreducible polynomial $\glt$-module
with highest weight $\bs\la = (n-k,k)$.

\medskip
The graded decomposition of $\V^S$ has the form
\vvn.3>
\beq
\label{gr dec}
\V^S\ = \
\oplus_{j=-n}^\infty (\V^S)_j \ .
\vv.3>
\eeq

Decompositions \Ref{weight dec}, \Ref{iso decom} and \Ref{gr dec} are
compatible. Namely, we can choose a graded basis $v_i, i\in I$, of the
$\C[z_1,\dots,z_n]^S$-module $\V^S$ which agrees with decompositions
\Ref{weight dec}, \Ref{iso decom}, \Ref{gr dec}. That means that each basis
vector $v_i$ lies in one summand of each of decompositions \Ref{weight dec},
\Ref{iso decom}, \Ref{gr dec}.

\begin{lem}
\label{lem on char of VS}
For any $\bs\la =(n-k,k)$, the graded character of $\V^{S,0}_{\bs\la}$
is given by the formula
\beq
\label{char VS0}
\ch_{\V^{S,0}_{\bs\la}}(q)\,=\,
\frac{(1-q^{n-2k+1})^2}
{1-q}\,
\frac{ 1 }
{(q)_{n-k+1}(q)_{k}}\,q^{2k-n}\ .
\eeq
\end{lem}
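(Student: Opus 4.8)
The plan is to compute the graded character of $\V^{S,0}_{\bs\la}$ by reducing it to the already-known graded character of the isotypical component $(W_m)_{\bs\la}$ of the Weyl module $W_m$, given in Lemma~\ref{weyl}(iv), and the easily-computed graded character of the polynomial ring $\C[z_1,\dots,z_n]^S$. First I would exploit the compatibility of the grading with the isotypical decomposition \Ref{iso decom} together with the $\C[z_1,\dots,z_n]^S$-module isomorphism $\V^{S,0}_{\bs\la}\simeq(V^{\otimes n})_{\bs\la}\otimes\C[z_1,\dots,z_n]^S$. The subtle point here is that this last isomorphism as plain modules is not obviously an isomorphism of \emph{graded} modules, so the first key step is to pin down the grading precisely: by Lemma~\ref{cycl grd} the $\gltt$-action is graded, and by construction $\deg(v\otimes p)=\deg v+\deg p$, so I would argue that a graded $\C[z_1,\dots,z_n]^S$-basis of $\V^{S,0}_{\bs\la}$ exists whose "leading parts" form a basis of $(V^{\otimes n})_{\bs\la}$ (concentrated in degrees $-n,\dots,-2k$ after the shift coming from $\deg v=-k$ on the weight space $[n-k,k]$ and the contribution of lower weights inside the isotypical component). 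This yields the factorization
\beq
\label{char factor plan}
\ch_{\V^{S,0}_{\bs\la}}(q)\,=\,\ch_{(V^{\otimes n})_{\bs\la}}(q)\cdot\ch_{\C[z_1,\dots,z_n]^S}(q)\,,
\eeq
where $\ch_{(V^{\otimes n})_{\bs\la}}(q)$ is a Laurent polynomial recording the graded dimensions of the $\glt$-isotypical component of $V^{\otimes n}$ with respect to the grading $\deg v=-k$ on $[n-k,k]$.

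Next I would evaluate the two factors on the right of \Ref{char factor plan}. For $\ch_{\C[z_1,\dots,z_n]^S}(q)$ the standard fact is $\C[z_1,\dots,z_n]^S=\C[\sigma_1,\dots,\sigma_n]$ with $\deg\sigma_i=i$, so
\beq
\label{char sym plan}
\ch_{\C[z_1,\dots,z_n]^S}(q)\,=\,\prod_{i=1}^n\frac1{1-q^i}\,=\,\frac1{(q)_n}\,.
\eeq
For $\ch_{(V^{\otimes n})_{\bs\la}}(q)$ I would connect it with Lemma~\ref{weyl}: by part (ii) of that lemma $W_m\simeq V^{\otimes m}$ as $\glt$-modules, and the grading on $W_m$ restricted to its weight spaces agrees (up to the convention on $\deg v_m=0$ versus $\deg v=-k$ on $[n-k,k]$) with the grading used here on $V^{\otimes n}$. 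More precisely, with $m=n$ the graded $\glt$-module $V^{\otimes n}$ underlying $W_n$ has the isotypical component whose graded character is computed in Lemma~\ref{weyl}(iv); but that character is the character of $(W_n)_{\bs\la}$ as a \emph{module}, whereas here I want the character of the finite-dimensional space $(V^{\otimes n})_{\bs\la}$. The bridge is Lemma~\ref{factor=weyl}: setting all $b_s$ to a point (equivalently $\bs a=0$), $\V^S/I^\V_{\bs 0}\simeq W_n$, and $I^\V_{\bs 0}$ is the augmentation-type ideal $(V^{\otimes n}\otimes I_{\bs 0})\cap\V^S$ where $I_{\bs 0}=\langle\sigma_1,\dots,\sigma_n\rangle$; hence as graded $\C[z_1,\dots,z_n]^S$-modules $\V^{S,0}_{\bs\la}$ is free with $\V^{S,0}_{\bs\la}/I_{\bs 0}\V^{S,0}_{\bs\la}\simeq(W_n)_{\bs\la}$ as graded vector spaces, giving
\beq
\label{char Wn plan}
\ch_{(V^{\otimes n})_{\bs\la}}(q)\,=\,\ch_{(W_n)_{\bs\la}}(q)\cdot(q)_n\,=\,\frac{(1-q^{n-2k+1})^2}{1-q}\,\frac{(q)_n}{(q)_{n-k+1}(q)_k}\,q^{2k-n}\cdot(q)_n\,/\,\Bigl(\text{correction}\Bigr)\,.
\eeq
I expect that the cleanest route actually bypasses the last displayed manipulation: combine directly $\ch_{\V^{S,0}_{\bs\la}}(q)=\ch_{(W_n)_{\bs\la}}(q)\cdot\ch_{\C[z_1,\dots,z_n]^S}(q)$ — indeed $\V^{S,0}_{\bs\la}$ is free over $\C[z]^S$ with fiber $(W_n)_{\bs\la}$, so its graded character is the fiber character times $1/(q)_n$ — and then plug in Lemma~\ref{weyl}(iv) with $m=n$ and \Ref{char sym plan}. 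The factors $(q)_n$ cancel, leaving exactly \Ref{char VS0}.

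The main obstacle will be the bookkeeping of gradings and conventions: verifying that the grading on $W_n$ from Lemma~\ref{weyl}(i) matches, degree for degree, the grading induced on the fiber $\V^{S,0}_{\bs\la}/I_{\bs 0}\V^{S,0}_{\bs\la}$, including the sign convention $\deg v=-k$ on weight $(n-k,k)$ and the placement of the highest-weight vector in degree $0$. One must also check freeness of $\V^{S,0}_{\bs\la}$ over $\C[z_1,\dots,z_n]^S$ in a \emph{graded} sense, which follows from Lemma~\ref{VSfree} (freeness of $\V^S$) restricted to an isotypical summand together with the compatibility of \Ref{iso decom} with the grading; the graded Nakayama lemma then legitimizes passing to the fiber. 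Once these compatibilities are nailed down, the computation is a one-line substitution, and the appearance of the factor $(1-q^{n-2k+1})^2/(1-q)$ — which is the "spin-$\frac{n-2k}2$" piece recording the $\glt$-module structure inside the isotypical component — is inherited verbatim from Lemma~\ref{weyl}(iv).
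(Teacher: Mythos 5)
Your final route---$\V^{S,0}_{\bs\la}$ is a graded free $\C[\sigma_1,\dots,\sigma_n]$-module whose fiber at $\bs a=0$ is $(W_n)_{\bs\la}$ with the grading of Lemma \ref{weyl}(i), so that $\ch_{\V^{S,0}_{\bs\la}}(q)=\ch_{(W_n)_{\bs\la}}(q)\cdot 1/(q)_n$ and the factor $(q)_n$ cancels against Lemma \ref{weyl}(iv)---is correct and is exactly the paper's argument, whose proof simply cites Lemma \ref{weyl}. The only thing to discard is your intermediate factorization through the naively graded $(V^{\otimes n})_{\bs\la}$ (constant polynomials, $\deg v=-m$ on weight $(n-m,m)$): the isomorphism \Ref{glt-C^S module} is not graded, as one sees already for $n=2$, $\bs\la=(1,1)$, and your fiber argument via Lemma \ref{factor=weyl} correctly bypasses it.
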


The lemma follows from Lemma \ref{weyl}.

\medskip

Decomposition \Ref{iso decom} of $\V^S$ into $\glt$-isotypical
components is preserved by the action of $\B^0$. By formula~\Ref{Bii},
for any $\bs\la=(n-k,k)$, the summand
$\V^{S,0}_{\bs\la}$ is the eigenspace of the operator
$B_{22}^0$ with the eigenvalue $k(n-k+1)$.
Hence
\beq
\label{direc B^0}
\Bvz\ = \ \oplus_{\bs\la} \, \B^0_{\bs\la}\ ,
\eeq
where $\B^0_{\bs\la}$ is the image of $\B^0$ in $\End(\V^{S,0}_{\bs\la})$.

\begin{lem}
\label{direct B^0 lem}
The image $\B^0_{\bs\la}$
of $\B^0$ in $\End(\V^{S,0}_{\bs\la})$ is canonically isomorphic to the
image of $\B^0$ in $\End(\sing \V^{S,0}_{\bs\la})$, where
$\sing \V^{S,0}_{\bs\la} \subset \V^S$ is the subspace of singular vectors of weight
$\bs\la$.
\end{lem}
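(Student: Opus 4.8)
The plan is to exploit the decomposition $\V^{S,0}_{\bs\la} \simeq L_{\bs\la} \otimes \sing\V^{S,0}_{\bs\la}$ of a finite-dimensional $\glt$-weight module into its irreducible constituent times the multiplicity space (the space of singular vectors), where $\glt$ acts trivially on the second factor. Since $\B^0$ commutes with $U(\glt)$ by Theorem \ref{T-thm}, every element $B$ of $\B^0$ acts on $\V^{S,0}_{\bs\la}$ as a $\glt$-module endomorphism, and by Schur's lemma such an endomorphism must have the form $1_{L_{\bs\la}} \otimes \beta(B)$ for a unique operator $\beta(B) \in \End(\sing\V^{S,0}_{\bs\la})$. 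First I would check that $B \mapsto \beta(B)$ is an algebra homomorphism (immediate from the tensor-product form) and that its image is exactly the image of $\B^0$ in $\End(\sing\V^{S,0}_{\bs\la})$ — the latter because restriction to the highest-weight subspace $\sing\V^{S,0}_{\bs\la}$ realizes precisely this projection onto the second tensor factor.

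The key steps, in order, are: (1) recall that $\sing\V^{S,0}_{\bs\la}\subset\V^S$ is the subspace of singular vectors of weight $\bs\la$, so that $\B^0$, commuting with $e_{12}$ and with the Cartan, preserves it; this gives a restriction homomorphism $\B^0_{\bs\la} \to \End(\sing\V^{S,0}_{\bs\la})$ whose image is the image of $\B^0$ on singular vectors. (2) Observe that the $\glt$-module map $L_{\bs\la}\otimes\sing\V^{S,0}_{\bs\la} \xrightarrow{\sim} \V^{S,0}_{\bs\la}$ intertwines the $\B^0$-actions, where on the left $\B^0$ acts through $1\otimes\beta$; this is what makes the restriction map injective as well as surjective. (3) Combine (1) and (2): the composite $\B^0_{\bs\la}\to\End(\sing\V^{S,0}_{\bs\la})$ is a bijective algebra homomorphism onto the image of $\B^0$ on singular vectors, hence the claimed canonical isomorphism. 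The word "canonically" is justified because the whole construction uses only the $\glt$-module structure and Schur's lemma, no choices.

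The main obstacle — though it is minor — is making precise that an element of $\B^0$ acting on the reducible-but-isotypical module $\V^{S,0}_{\bs\la}$ (which is a possibly infinite direct sum of copies of $L_{\bs\la}$ over $\C[z_1,\dots,z_n]^S$) genuinely has the block-scalar form $1_{L_{\bs\la}}\otimes\beta$. This is where one must invoke the isomorphism \Ref{iso decom}, $\V^{S,0}_{\bs\la} \simeq (V^{\otimes n})_{\bs\la}\otimes\C[z_1,\dots,z_n]^S$, together with the fact that $(V^{\otimes n})_{\bs\la}\simeq L_{\bs\la}\otimes\sing(V^{\otimes n})[\bs\la]$ as $\glt$-modules; then a $\glt$-endomorphism commuting with the $\glt$-action is, by Schur applied to each isotypic copy, scalar on the $L_{\bs\la}$ factor with coefficients that are $\C[z_1,\dots,z_n]^S$-linear operators on the multiplicity space $\sing(V^{\otimes n})[\bs\la]\otimes\C[z_1,\dots,z_n]^S = \sing\V^{S,0}_{\bs\la}$. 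Once this identification of $\End_{\glt}(\V^{S,0}_{\bs\la})$ with $\End(\sing\V^{S,0}_{\bs\la})$ is in hand, the lemma is essentially the statement that $\B^0_{\bs\la}$ sits inside $\End_{\glt}(\V^{S,0}_{\bs\la})$ under this identification, which is exactly the content of Theorem \ref{T-thm} plus weight considerations.
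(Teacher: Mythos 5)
Your argument is correct and is exactly the paper's route: the paper disposes of this lemma with the single line ``The lemma follows from Schur's lemma,'' and your write-up is just the careful expansion of that — using $\V^{S,0}_{\bs\la}\simeq L_{\bs\la}\otimes\sing\V^{S,0}_{\bs\la}$, the commutativity of $\B^0$ with $U(\glt)$ from Theorem \ref{T-thm}, and Schur's lemma on each isotypic copy to see that every element of $\B^0$ acts as $1\otimes\beta(B)$, with restriction to singular vectors giving the canonical isomorphism of images.
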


The lemma follows from Schur's lemma.

By \cite{MTV3} the graded character of $\B^0_{\bs\la}$ is given by the
formula
\beq
\label{char B^0_{la}}
\ch_{\B^0_{\bs\la}}(q)\,=\,
\frac{ 1-q^{n-2k+1} }
{(q)_{n-k+1}(q)_{k}}\,q^{2k-n}\ .
\eeq

\subsection{Algebra $\A_{n-2k}\otimes \B_{\bs\la}^0$ and its module
$\V^{S,0}_{\bs\la}$}
\label{Algebra Adotimes Bbsla}
Given an integer $d$, let $\A_d\,=\,\C[b]/\langle b^{d+1}\rangle$\,.
The algebra
$\A_{n-2k}\otimes \B^0_{\bs\la}$ acts on $\V^{S,0}_{\bs\la}$ by the rule,
\bea
b^j\otimes B\ \mapsto \ e_{21}^jB
\eea
for any $j$ and $B\in \B^0_{\bs\la}$.
Define a grading on $\A_{n-2k}\otimes \B^0_{\bs\la}$ by setting
$\deg\,(b^j\otimes B)\,=\, -j+\deg\,B$. The action of
$\A_{n-2k}\otimes \B^0_{\bs\la}$ on $\V^{S,0}_{\bs\la}$ is graded.

\subsection{Deformed isotypical components of $\V^S$}
\label{Deformed isotypical components}
In this section we obtain a decomposition of the algebra
$\Bv$ similar to decomposition
\Ref{direc B^0} of the algebra $\Bvz$.

For $\bs\la=(n-k,k)$, denote
$\V^S_{\bs\la}\subset \V^S$ the generalized eigenspace of the operator
$B_{22}\in \B$ with the eigenvalue $k(n-k+1)$.
Clearly, $\V^S_{\bs\la}$ is a
$\C[z_1,\dots,z_n]^S$-submodule.

\begin{lem}
\label{lem on isotyp decomp}
We have the following three properties.
\begin{enumerate}
\item[(i)]

Consider a graded basis $v_i,i\in I$,
of the free $\C[z_1,\dots,z_n]^S$-module $\V^S$ which agrees with
decompositions \Ref{weight dec}, \Ref{iso decom}, \Ref{gr dec},
see Section \ref{VS}.
Let a subset $I_{\bs\la}\subset I$ be such that
the vectors $v_i,i\in I_{\bs\la}$, form a basis of
$\V^{S,0}_{\bs\la}$. Then the
$\C[z_1,\dots,z_n]^S$-module
$\V^{S}_{\bs\la}$ has a basis
$w_i,i\in I_{\bs\la}$, such that for all $i$, we have $\deg w_i = \deg v_i$ and
$w_i = v_i + v_i'$, where $v_i'$ lies in the sum of the $\glt$-weight components
of $\V^S$ of weight
lower than the weight of $v_i$.

\item[(ii)]

We have
\beq
\label{dir dec}
\V^S\ =\ \oplus_{\bs\la}\,\V^{S}_{\bs\la}\ .
\eeq
\item[(iii)]

$\V^{S}_{\bs\la}$
is a graded free
$\C[z_1,\dots,z_n]^S$-module
of rank equal to the rank of the isotypical component
$\V^{S,0}_{\bs\la}$. The graded character of $\V^{S}_{\bs\la}$ is given by the formula
\beq
\label{char VS}
\ch_{\V^{S}_{\bs\la}}(q)\,=\,
\frac{(1-q^{n-2k+1})^2}
{1-q}\,
\frac{ 1 }
{(q)_{n-k+1}(q)_{k}}\,q^{2k-n}\ .
\eeq

\end{enumerate}
\end{lem}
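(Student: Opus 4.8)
The plan is to prove the three parts of Lemma~\ref{lem on isotyp decomp} together, using the grading as the main tool. First I would analyze the operator $B_{22}$ on $\V^S$ using formula~\Ref{nilp formula}: we have $B_{22}=B_{22}^0+e_{21}\otimes t^0=B_{22}^0+e_{21}$. By \Ref{Bii}, $B_{22}^0=(e_{11}+1)e_{22}-e_{21}e_{12}$ lies in the center of $U(\glt)$, so it acts on each isotypical component $\V^{S,0}_{\bs\la}$, $\bs\la=(n-k,k)$, as the scalar $k(n-k+1)$; these scalars are pairwise distinct for $k=0,1,\dots,\lfloor n/2\rfloor$ since $x\mapsto x(n-x+1)$ is strictly increasing on that range. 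The correction term $e_{21}$ is homogeneous of degree $j-i=0-1=-1$ with respect to the grading of Section~\ref{current} — wait, more directly: $e_{21}\otimes t^0$ has degree $0+1-2=-1$, and under the grading on $\V^S$ of Section~\ref{sec grading on V}, $e_{21}$ maps $V^{\otimes n}[n-m,m]$ to $V^{\otimes n}[n-m-1,m+1]$, i.e.\ it strictly \emph{decreases} the $\glt$-weight. So relative to the weight filtration, $B_{22}=B_{22}^0+\text{(lower order)}$, and $B_{22}^0$ is the associated graded operator, which is scalar on each isotypical block.

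Next I would make this precise. Order the weights so that $\bs\la=(n-k,k)$ comes before $\bs\la'=(n-k',k')$ when $k<k'$; then $e_{21}$ sends $\V^{S,0}_{\bs\la}$ into $\oplus_{k'>k}\V^{S,0}_{\bs\la'}$. Hence in the direct sum decomposition $\V^S=\oplus_{\bs\la}\V^{S,0}_{\bs\la}$, the operator $B_{22}$ is block upper-triangular with diagonal blocks $k(n-k+1)\cdot\mathrm{id}$. Because these eigenvalues are distinct, standard linear algebra (over the field of fractions of $\C[z_1,\dots,z_n]^S$, or better, working with the $\C[z_1,\dots,z_n]^S$-module structure directly) gives that $\V^S$ decomposes as the direct sum of the generalized eigenspaces $\V^S_{\bs\la}$, proving (ii), and that the generalized eigenspace $\V^S_{\bs\la}$ projects isomorphically onto $\V^{S,0}_{\bs\la}$ along the other summands. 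Concretely, for each basis vector $v_i$, $i\in I_{\bs\la}$, there is a unique $w_i\in\V^S_{\bs\la}$ of the form $w_i=v_i+v_i'$ with $v_i'\in\oplus_{k'>k}\V^{S,0}_{\bs\la'}$; since $B_{22}$ is homogeneous of degree $0$ (Lemma~\ref{lem Bethe degree} with $i=j=2$) and the $v_i$ are graded, the $w_i$ can be taken graded with $\deg w_i=\deg v_i$. This is exactly statement (i). For (iii), the $w_i$, $i\in I_{\bs\la}$, form a $\C[z_1,\dots,z_n]^S$-basis of $\V^S_{\bs\la}$ in bijection with the basis $v_i$, $i\in I_{\bs\la}$, of $\V^{S,0}_{\bs\la}$, and $\deg w_i=\deg v_i$, so the rank and the graded character of $\V^S_{\bs\la}$ coincide with those of $\V^{S,0}_{\bs\la}$; the explicit formula~\Ref{char VS} is then read off from Lemma~\ref{lem on char of VS}.

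The one point requiring genuine care — the main obstacle — is that everything must be done over the ring $R=\C[z_1,\dots,z_n]^S$, not over a field, so one cannot simply invoke the Jordan--Chevalley decomposition. The fix is to exploit that the eigenvalue scalars $k(n-k+1)$ lie in $\C\subset R$ and their pairwise differences are units in $R$; then the projector onto the $\bs\la$-block can be written as a polynomial in $B_{22}$ with \emph{integer} (hence $R$-valued) coefficients via Lagrange interpolation, $P_{\bs\la}=\prod_{k'\ne k}\bigl(B_{22}-k'(n-k'+1)\bigr)\big/\bigl(k(n-k+1)-k'(n-k'+1)\bigr)$ — except one must account for the generalized (not honest) eigenspaces, so instead take a sufficiently high power $N$ and use $\bigl(B_{22}-k(n-k+1)\bigr)^N$ to kill the off-diagonal nilpotent part, again with coefficients in $\Z[1/(\text{those unit differences})]\subset R$. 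This produces $R$-linear projectors $\V^S\to\V^S_{\bs\la}$ summing to the identity, giving the direct sum decomposition \Ref{dir dec} as $R$-modules and freeness/gradedness of each summand for free, since each $\V^S_{\bs\la}$ is then a graded direct summand of the graded free $R$-module $\V^S$ (Lemma~\ref{VSfree}). I would present the interpolation/projector argument as the core of the proof and treat the weight-filtration triangularity of $B_{22}=B_{22}^0+e_{21}$ as the input that makes it work.
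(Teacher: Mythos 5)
Your proposal is correct and follows essentially the same route as the paper: by \Ref{nilp formula} the matrix of $B_{22}$ in the chosen graded basis is triangular over $\C[z_1,\dots,z_n]^S$ with diagonal entries $k(n-k+1)$, and $\V^S_{\bs\la}$ is the kernel of a high power of $B_{22}-k(n-k+1)$ (equivalently the image of your interpolation projector, which is a useful elaboration of the step the paper leaves implicit). Two small corrections: since $B_{2,j}=B^0_{2,j}+e_{21}\otimes t^{j-1}$, the perturbation in $B_{22}=B_{2,2}$ is $e_{21}\otimes t$, not $e_{21}\otimes 1$; it acts as $\sum_{s} z_s\,e_{21}^{(s)}$, still strictly lowers the $\glt$-weight, and has degree $1+1-2=0$, which resolves the degree puzzle you noted. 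Also, your intermediate claim that $v_i'$ lies in $\oplus_{k'>k}\V^{S,0}_{\bs\la'}$ (and that $\V^S_{\bs\la}$ projects isomorphically onto $\V^{S,0}_{\bs\la}$ along the other isotypical summands) is stronger than needed and not justified as stated; the projector/triangularity argument only yields, and the lemma only requires, that $v_i'$ lies in the sum of the lower $\glt$-weight components.
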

\begin{proof}
The operator $B_{22} : \V^S \to \V^S$ is of degree zero.
The matrix $g=(g_{ij})$ of $B_{22}$ in the basis
$v_i,i\in I$, has entries in $\C[z_1,\dots,z_n]^S$.
By \Ref{nilp formula}, the matrix $g$ is lower triangular with
the diagonal entries $g_{ii}=k(n-k+1)$ for all $i\in I_{\bs\la}$.

The corresponding
generalized eigenspace $\V^S_{\bs\la}\subset \V^S$ of $B_{22}$
is the kernel of
the matrix $(g-k(n-k+1))^d$
for a suitable large integer $d$.
The kernel of such a matrix
has properties (i-iii).
\end{proof}

It is clear that $\V^{S}_{\bs\la}\subset \V^S$ are $\B$-submodules.
We call the $\B$-modules $\V^{S}_{\bs\la}$
{\it the deformed isotypical components}.

We have
\beq
\label{direc B}
\Bv\ = \ \oplus_{\bs\la} \, \B_{\bs\la}\ ,
\eeq
where $\B_{\bs\la}$ is the image of $\B$ in $\End(\V^{S}_{\bs\la})$.

\subsection{Epimorphisms
$p_{\bs\la}^\V :\V^S_{\bs\la}\to\sing\,\V^{S,0}_{\bs\la}$ and
$p_{\bs\la}^\B : \B_{\bs\la} \to \B^0_{\bs\la}$}

\label{Epimorphism p}

For $\bs\la=(n-k,k)$, let $\V^S_{\bs\la}$ be the corresponding
deformed isotypical component. Let $v_i, i\in I_{\bs\la},$ be a
basis of the isotypical component $\V^{S,0}_{\bs\la}$, which agrees
with decompositions
\Ref{weight dec}, \Ref{iso decom}, \Ref{gr dec}.
Let $I_{\bs\la,s}\subset I_{\bs\la}$ be the subset such that
the vectors $v_i, i\in I_{\bs\la,s}$, form a basis of the
$\C[z_1,\dots,z_n]^S$-module $\sing\,\V^{S,0}_{\bs\la}$,
where $\sing\,\V^{S,0}_{\bs\la}$ is the submodule of singular vectors.

Let $w_i, i\in I_{\bs\la},$ be a basis of the deformed isotypical
component $\V^{S}_{\bs\la}$, which has properties described in Lemma
\ref{lem on isotyp decomp} with respect to the basis
$v_i, i\in I_{\bs\la}$.

Define a $\C[z_1,\dots,z_n]^S$-module epimorphism
\beq
\label{la epi S}
p_{\bs\la}^\V\ :\ \V^S_{\bs\la}\ \to\ \sing\,\V^{S,0}_{\bs\la}\
\eeq
by the formula:
$w_i \mapsto v_i$ for $i\in I_{\bs\la,s}$ and
$w_i \mapsto 0$ for $i\in I_{\bs\la} \minus I_{\bs\la,s}$.

\begin{lem}
\label{lem on p-la}
We have the following properties.
\begin{enumerate}
\item[(i)]
The kernel of $p^\V_{\bs\la}$ is a $\B$-submodule of the deformed isotypical component
$\V^S_{\bs\la}$ and, therefore, $p^\V_{\bs\la}$ induces
a $\B$-module structure on $\sing\,
\V^{S,0}_{\bs\la}\simeq \V^S_{\bs\la}/({\rm ker}\,p^\V_{\bs\la})$.

\item[(ii)] For this $\B$-module structure on $\sing\,
\V^{S,0}_{\bs\la}$, the image of the $\B$ in $\End (\sing\,
\V^{S,0}_{\bs\la})$ is canonically isomorphic to the image of $\B^0$ in
$\End (\sing\, \V^{S,0}_{\bs\la})$. More precisely, for every
$(i,j)$, the elements $B_{ij}\in \B$ and $B^0_{ij}\in \B^0$ have the
same image.

\end{enumerate}
\end{lem}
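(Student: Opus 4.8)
The plan is to work entirely in the graded basis $v_i$, $w_i$, $i\in I_{\bs\la}$, of Lemma~\ref{lem on isotyp decomp} and to track what the operators $B_{ij}$ and $B^0_{ij}$ do modulo lower weight components. First I would establish part~(i): the kernel of $p^\V_{\bs\la}$ is spanned over $\C[z_1,\dots,z_n]^S$ by the vectors $w_i$, $i\in I_{\bs\la}\minus I_{\bs\la,s}$, namely those coming from the non-singular part of the isotypical component. The key observation is that $\B$ contains the element $B_{22}$, whose generalized eigenspace decomposition gave us $\V^S_{\bs\la}$, but more importantly $\B$ contains all $B_{1j}$, which by Lemma~\ref{Uz} act as multiplication by power sums; combined with the action of $B_{21}=B^0_{21}+e_{21}\otimes t^0$, which on the top of each $\glt$-string behaves like $e_{21}$, one checks that the $\B$-orbit of any $w_i$ with $i\in I_{\bs\la,s}$ stays within the span of $\{w_j\}$ together with strictly-lower-weight correction terms — and since $p^\V_{\bs\la}$ kills exactly those corrections and the non-singular $w_j$, the kernel is preserved. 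Concretely, I would argue that $\ker p^\V_{\bs\la}$ coincides with $\V^S_{\bs\la}\cap\bigl(\sum_{m>k}\V^S[n-m,m]\bigr)$-type subspace intersected appropriately, a $\B$-submodule because $\B$ cannot raise weight (all $B_{ij}$ have nonpositive $e_{11}-e_{22}$ effect, the only weight-changing piece being $e_{21}$, which lowers the first coordinate).

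Next, for part~(ii), I would compute the induced action of each $B_{ij}$ on the quotient $\sing\,\V^{S,0}_{\bs\la}\simeq \V^S_{\bs\la}/\ker p^\V_{\bs\la}$ in the basis $\{v_i : i\in I_{\bs\la,s}\}$, and compare it with the action of $B^0_{ij}$. The formula $B_{2,j}=B^0_{2,j}+e_{21}\otimes t^{j-1}$ from \Ref{nilp formula} is the crux: I need to show that the extra term $e_{21}\otimes t^{j-1}$ acts as zero on $\sing\,\V^{S,0}_{\bs\la}$ after passing to the quotient. But $e_{21}\otimes t^{j-1}$ strictly lowers the $\glt$-weight (it sends weight $(n-m,m)$ to $(n-m-1,m+1)$), so applied to any $w_i$ it lands in a sum of lower weight components, which — once we know $\ker p^\V_{\bs\la}$ contains all of $\V^S_{\bs\la}$ in weights lower than $(n-k,k)$ that don't come from singular vectors, together with the correction terms — maps to $0$ under $p^\V_{\bs\la}$. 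Thus on the quotient $B_{2,j}$ and $B^0_{2,j}$ agree; since $B_{1,j}=B^0_{1,j}$ identically by \Ref{nilp formula}, all standard generators have the same image, hence the images of $\B$ and $\B^0$ in $\End(\sing\,\V^{S,0}_{\bs\la})$ coincide. Finally the canonical isomorphism with the image of $\B^0$ in $\End(\sing\,\V^{S,0}_{\bs\la})$ is just Lemma~\ref{direct B^0 lem} combined with this identification.

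The main obstacle, I expect, is being careful about \emph{which} lower-weight vectors lie in $\ker p^\V_{\bs\la}$: the definition of $p^\V_{\bs\la}$ sends $w_i\mapsto 0$ for $i\in I_{\bs\la}\minus I_{\bs\la,s}$, but the $w_i$ for $i\in I_{\bs\la,s}$ themselves contain lower-weight pieces $v_i'$, so I cannot simply say "lower weight $\Rightarrow$ in the kernel." The correct statement is that $e_{21}\otimes t^{j-1}$ applied to $w_i$ for $i\in I_{\bs\la,s}$ produces a vector whose \emph{top} weight component is $e_{21}$ applied to a singular vector of weight $(n-k,k)$ — and a singular vector is annihilated by $e_{21}$ only in the extreme case, so in general this top piece is nonzero but lies in weight $(n-k-1,k+1)$; I must show it is nonetheless in $\ker p^\V_{\bs\la}$. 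The resolution is that $\ker p^\V_{\bs\la}$ is a $\C[z_1,\dots,z_n]^S$-module complement to the span of $\{w_i: i\in I_{\bs\la,s}\}$ inside $\V^S_{\bs\la}$, and every weight component strictly below $(n-k,k)$ occurring in $\V^S_{\bs\la}$ is, by the structure of $\glt$-strings through singular vectors, accounted for by the $w_i$, $i\in I_{\bs\la}\minus I_{\bs\la,s}$ (these index precisely the non-highest vectors in the irreducible $\glt$-summands). So I would phrase part~(i)'s proof as: $\ker p^\V_{\bs\la}=\bigoplus_{i\in I_{\bs\la}\minus I_{\bs\la,s}}\C[z_1,\dots,z_n]^S\,w_i$, and this is $\B$-stable because $B_{1j}$ acts by scalars-in-$\C[z]^S$ and $B_{2j}$, modulo the already-established lower-triangularity in the $w$-basis, cannot escape it. Once this bookkeeping is pinned down, part~(ii) is immediate from \Ref{nilp formula}.
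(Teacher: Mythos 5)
Your part (i) is sound and is essentially the argument the paper intends (its proof is only a citation of Lemma~\ref{lem on isotyp decomp}, formula \Ref{nilp formula} and Theorem~\ref{T-thm}): the kernel is the free $\C[z_1,\dots,z_n]^S$-span of the $w_i$ with $i\in I_{\bs\la}\minus I_{\bs\la,s}$, which by the triangular shape $w_i=v_i+v_i'$ equals $\V^S_{\bs\la}\cap\bigoplus_{m>k}\V^S[n-m,m]$; since $\V^S_{\bs\la}$ is $\B$-stable and every generator of $\B$ weakly lowers the $\glt$-weight (the $B_{1j}$ are multiplications by symmetric functions, $B^0_{2j}$ commutes with $\glt$ by Theorem~\ref{T-thm}, and $e_{21}\otimes t^{j-1}$ strictly lowers weight, by \Ref{nilp formula}), this intersection is $\B$-stable.

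In part (ii), however, the step you yourself single out as the crux --- showing that $(e_{21}\otimes t^{j-1})w_i$ lies in $\ker p^\V_{\bs\la}$ for $i\in I_{\bs\la,s}$ --- is not the right statement: $e_{21}\otimes t^{j-1}$ is not an element of the commutative algebra $\B$, so it need not preserve the generalized $B_{22}$-eigenspace $\V^S_{\bs\la}$, and $(e_{21}\otimes t^{j-1})w_i$ in general does not lie in the domain of $p^\V_{\bs\la}$ at all; for the same reason $B^0_{2j}$ does not act on the quotient $\V^S_{\bs\la}/\ker p^\V_{\bs\la}$, so one cannot literally say that $B_{2j}$ and $B^0_{2j}$ ``agree on the quotient'' without routing the comparison through $p^\V_{\bs\la}$. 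The repair is short and uses exactly your description of the kernel: since $\ker p^\V_{\bs\la}=\V^S_{\bs\la}\cap\bigoplus_{m>k}\V^S[n-m,m]$, the map $p^\V_{\bs\la}$ coincides on $\V^S_{\bs\la}$ with the projection of $\V^S$ onto its weight-$(n-k,k)$ component (whose restriction to $\V^S_{\bs\la}$ has image $\sing\V^{S,0}_{\bs\la}$, the top components of the $w_i$ being the $v_i$, $i\in I_{\bs\la,s}$). Hence for $x\in\V^S_{\bs\la}$ one computes $p^\V_{\bs\la}(B_{2j}x)$ as the weight-$(n-k,k)$ component of $B_{2j}x=B^0_{2j}x+(e_{21}\otimes t^{j-1})x$; the second summand contributes nothing there, because $x$ has no components above weight $(n-k,k)$ and $e_{21}\otimes t^{j-1}$ strictly lowers weight, while $B^0_{2j}$ preserves weights, so the top component equals $B^0_{2j}\,p^\V_{\bs\la}(x)$, which lies in $\sing\V^{S,0}_{\bs\la}$ because $\B^0$ commutes with $\glt$. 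This gives $p^\V_{\bs\la}\circ B_{2j}=B^0_{2j}\circ p^\V_{\bs\la}$ on $\V^S_{\bs\la}$ (the case of $B_{1j}$ is trivial from \Ref{nilp formula}), which is the precise form of part (ii). With this adjustment your plan is complete and follows the same route as the paper.
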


\begin{proof}
Lemma follows from Lemma \ref{lem on isotyp decomp}, formula
\Ref{nilp formula} and Theorem \ref{T-thm}.
\end{proof}

By Lemmas \ref{direct B^0 lem}
and
\ref{lem on p-la}, the epimorphism $p^\V_{\bs\la}$ determines an
algebra epimorphism
\beq
\label{p-la-b}
p_{\bs\la}^\B
\ :\ \B_{\bs\la} \to \B^0_{\bs\la}\ .
\eeq
It is clear
$p_{\bs\la}^\B$ is graded and
$p_{\bs\la}^\B$ is a homomorphism of $\C[\sigma_1,\dots,\sigma_n]$-modules.

\medskip

\section{More on deformed isotypical components}
\label{sec More}

\subsection{Deformed isotypical components of $\Ma$}
\label{sec Deformed Ma}

Given a sequence of complex numbers $\bs a=(a_1,\dots,a_n)\in\C^n$,
consider the $\gltt$-module $\V^S/{I^\V_{\bs a}}$ as in Section
\ref{Weyl modules as quotients}.
As a $\glt$-module, $\V^S/{I^\V_{\bs a}}$ is isomorphic to
$V^{\otimes n}$ by Lemma \ref{factor=weyl}.

Consider the $\glt$-weight decomposition
of $\V^S/{I^\V_{\bs a}}$
and its decomposition into
$\glt$-isotypical components, respectively,
\begin{align}
\label{both decomp}
\V^S/I^\V_{\bs a} & \,{}=\, \oplus_{m=0}^n\,(\V^S/I^\V_{\bs a})[n-m,m]\ ,
\\
\V^S/I^\V_{\bs a}& \,{}=\, \oplus_{\bs\la}\,(\V^S/{I^\V_{\bs a}})_{\bs\la}\ .
\notag
\end{align}
Consider a graded basis $v_i,i\in I$,
of the free $\C[z_1,\dots,z_n]^S$-module $\V^S$ which agrees with
decompositions \Ref{weight dec}, \Ref{iso decom}, \Ref{gr dec}.
This basis induces a $\C$-basis
$\bar v_i,i\in I$, of
$\V^S/I^\V_{\bs a}$,
which agrees with both decompositions in
\Ref{both decomp}. For any $\bs\la$, the vectors $\bar v_i,i\in
I_{\bs\la}$, form a weight basis of the isotypical component
$(\V^S/{I^\V_{\bs a}})_{\bs\la}$.

\medskip

Denote
\bea
\Ma\ = \ \V^S/{I^\V_{\bs a}}\ .
\eea
For $\bs\la=(n-k,k)$, denote
\bea
\Mal\ \subset\ \Ma
\eea
the generalized eigenspace of the operator
$B_{22}\in \B$ with eigenvalue $k(n-k+1)$.

Lemma \ref{lem on isotyp decomp} has the following analog.

\begin{lem}
\label{lem on isotyp decomp M}
We have the next three properties.
\begin{enumerate}
\item[(i)]
$\Mal$ is a $\C$-vector space of the dimension
equal to the dimension of
$(\V^S/{I^\V_{\bs a}})_{\bs\la}$.

\item[(ii)]
$\Mal$ has a basis
$w_i,i\in I_{\bs\la}$, such that for all $i$,
$w_i = \bar v_i + v_i'$ where $v_i'$ lies in the sum of the
$\glt$-weight components
of $\Ma$ of weight
lower than the weight of $\bar v_i$.
\item[(iii)] We have
\beq
\label{dir dec M}
\Ma\ =\ \oplus_{\bs\la}\,\Mal\ .
\eeq
\end{enumerate}
\end{lem}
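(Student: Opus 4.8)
The statement is the exact analog of Lemma \ref{lem on isotyp decomp} for the finite-dimensional quotient $\Ma = \V^S/I^\V_{\bs a}$, so the plan is to transport the argument used there. The key point is that $I^\V_{\bs a}$ is a $\gltt$-submodule of $\V^S$, hence in particular a $\B$-submodule, and it is compatible with all the structures involved: it is spanned over $\C$ by the images under the projection $\V^S \to \Ma$ of a suitable subset of products $p\cdot v_i$ with $p$ in the ideal $I_{\bs a}^S$, so the induced basis $\bar v_i$, $i\in I$, of $\Ma$ is graded and agrees with both decompositions in \Ref{both decomp}, exactly as asserted in the paragraph preceding the lemma. Thus $\Ma$ plays for the pair (weight decomposition, isotypical decomposition) the same role that $\V^S$ plays, but now as a finite-dimensional graded vector space rather than a free $\C[z_1,\dots,z_n]^S$-module.

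\textbf{Main steps.} First I would record that $B_{22} : \Ma \to \Ma$ is well-defined (since $I^\V_{\bs a}$ is $\B$-stable) and of degree zero, and that in the basis $\bar v_i$, $i\in I$, its matrix $\bar g = (\bar g_{ij})$ is obtained from the matrix $g$ of $B_{22}$ on $\V^S$ by applying the evaluation $\C[z_1,\dots,z_n]^S \to \C[z_1,\dots,z_n]^S/I_{\bs a}^S \cong \C$, $\sigma_s \mapsto a_s$. By formula \Ref{nilp formula}, $B_{22} = B_{22}^0 + (\text{terms from } e_{21}(u))$; the operator $B_{22}^0$ acts on $\V^{S,0}_{\bs\la}$ as the scalar $k(n-k+1)$ by \Ref{Bii}, while the correction $e_{21}\otimes t^{j-1}$ strictly lowers the $\glt$-weight. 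Hence, with respect to the basis ordering that refines the weight filtration (lowest weight last, as in Lemma \ref{lem on isotyp decomp}), $\bar g$ is lower triangular with diagonal entry $k(n-k+1)$ on the block indexed by $I_{\bs\la}$; in particular its eigenvalues are exactly the numbers $k(n-k+1)$ for $\bs\la=(n-k,k)$, each with algebraic multiplicity $|I_{\bs\la}| = \dim (\V^S/I^\V_{\bs a})_{\bs\la}$. Since the distinct weights $(n-k,k)$ give distinct eigenvalues $k(n-k+1)$ for $0\le k\le n/2$, the generalized eigenspace decomposition of $\Ma$ under $B_{22}$ is exactly \Ref{dir dec M}, giving (iii), and $\dim \Mal = |I_{\bs\la}|$, giving (i). For (ii): $\Mal = \ker(\bar g - k(n-k+1))^d$ for $d \gg 0$; writing $\bar g - k(n-k+1)$ in block form along the weight filtration, it is block lower triangular with the $I_{\bs\la}$-block nilpotent and every other diagonal block invertible, so a standard block elimination produces, for each $i\in I_{\bs\la}$, a generalized eigenvector $w_i = \bar v_i + v_i'$ with $v_i'$ a combination of the $\bar v_j$ of strictly lower weight, and these $w_i$ are automatically homogeneous of degree $\deg \bar v_i$ because $B_{22}$ and the whole construction are graded.

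\textbf{Expected obstacle.} There is essentially no serious obstacle: the only thing to be careful about is bookkeeping for the passage from $\V^S$ to $\Ma$, namely checking that applying the quotient by $I^\V_{\bs a}$ does not destroy the lower-triangular-with-constant-diagonal shape of the matrix of $B_{22}$. This is where I would spend a sentence: the entries of $g$ above the "diagonal blocks" are genuine polynomials in $\sigma_1,\dots,\sigma_n$ of positive degree, and the diagonal entries are the constants $k(n-k+1)$; after the evaluation $\sigma_s\mapsto a_s$ the diagonal is unchanged and the off-diagonal entries become (possibly nonzero) constants, which is harmless for the triangular form. One should also note that the numbers $k(n-k+1)$, $0\le k\le \lfloor n/2\rfloor$, are pairwise distinct, which is what guarantees that the generalized eigenspaces are indexed by the $\bs\la$ with no collisions; this is immediate. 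With these remarks in place the proof is a direct copy of the proof of Lemma \ref{lem on isotyp decomp}, and I would simply write "The proof repeats the proof of Lemma \ref{lem on isotyp decomp}, using that $I^\V_{\bs a}$ is a $\B$-submodule of $\V^S$ and Lemma \ref{factor=weyl}."
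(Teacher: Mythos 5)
Your proposal is correct and matches the paper's intent: the paper states this lemma without a separate proof precisely because the argument is the one you give, namely repeating the proof of Lemma \ref{lem on isotyp decomp} for the matrix of $B_{22}$ reduced modulo $I^\V_{\bs a}$ (using that $I^\V_{\bs a}$ is a $\B$-submodule, formula \Ref{nilp formula}, the scalar action \Ref{Bii}, and the distinctness of the eigenvalues $k(n-k+1)$). Only your closing aside about the $w_i$ being homogeneous should be dropped: the ideal $I^\V_{\bs a}$ is not homogeneous for $\bs a\neq 0$, so $\Ma$ carries no induced grading — which is exactly why the statement for $\Ma$, unlike Lemma \ref{lem on isotyp decomp}(i), makes no claim about degrees.
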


It is clear that the subspaces $\Mal\subset \Ma$ are $\B$-submodules.
We call the $\B$-modules $\Mal$
{\it the deformed isotypical components} of $\Ma$.

\subsection{Bethe eigenleaves}
\label{ Bethe eigenleaves}
Let $\phi :\B^0 \to \C$ be a homomorphism.
Let $W_\phi\subset\Ma$ be the generalized
eigenspace of the $\B^0$-action with eigenvalue $\phi$.
Since the $\B^0$-action commutes with the $\glt$-action,\
$W_\phi$ is a $\glt$-submodule. Assume that $W_\phi$ is an irreducible
$\glt$-module with highest weight $\bs\la=(n-k,k)$.
This means, in particular, that
$Bw=\phi(B)w$ for all $w\in W_\phi$ and $B\in\B^0$.

Choose a weight basis $u_i, i=0,\dots, n-2k$, of $W_\phi$. Choose a
finite set $B^0_{ij}, (i,j)\in J$, of the standard generators of
$\B^0$, such that $W_\phi$ is the common generalized eigenspace of the
operators $B^0_{ij}\in\B^0, (i,j)\in J$, with eigenvalues
$\phi(B^0_{ij})$, respectively.

\medskip
Under these assumptions, denote $\Malp\subset\Ma$
the generalized eigenspace of the operators
$B_{ij}\in \B$, $(i,j)\in J$, with eigenvalues
$\phi(B^0_{ij})$, respectively.

Lemmas \ref{lem on isotyp decomp} and
\ref{lem on isotyp decomp M} have the following analog.

\begin{lem}
\label{lem on isotyp decomp W}
Under these assumptions, we have the next two properties.
\begin{enumerate}
\item[(i)]
$\Malp$ is a $\C$-vector subspace of $\Mal$ of the dimension
equal to the dimension of
$W_\phi$.

\item[(ii)]
$\Malp$ has a basis
$w_i,i\in 0,\dots,n-2k$, such that for all $i$,
$w_i = u_i + u_i'$ where $u_i'$ lies in the sum of the
$\glt$-weight components
of $\Mal$ of weight
lower than the weight of $u_i$.
\end{enumerate}
\end{lem}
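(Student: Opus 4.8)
\textbf{Proof proposal for Lemma \ref{lem on isotyp decomp W}.}
The plan is to reduce the statement to the already-established Lemma \ref{lem on isotyp decomp M} by working inside the single deformed isotypical component $\Mal$, where we already control everything. First I would observe that, since $W_\phi$ is an irreducible $\glt$-module with highest weight $\bs\la=(n-k,k)$, it is contained in the isotypical component $(\V^S/I^\V_{\bs a})_{\bs\la}$, and hence the whole generalized $\B$-eigenspace $\Malp$ is contained in $\Mal$ (the eigenvalues of the operators $B_{ij}$ restricted to $\Mal$ and those of $B_{ij}^0$ on $(\V^S/I^\V_{\bs a})_{\bs\la}$ are governed by the same lower-triangular matrix thanks to \Ref{nilp formula}). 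So the problem becomes: inside $\Mal$, cut out the subspace where the finitely many commuting operators $B_{ij}$, $(i,j)\in J$, have the prescribed generalized eigenvalues $\phi(B^0_{ij})$, and show it has the expected dimension and triangular basis.

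The key mechanism is the triangular-deformation picture already used in Lemma \ref{lem on isotyp decomp M}(ii): in the basis $w_i$, $i\in I_{\bs\la}$, of $\Mal$, each $w_i$ is of the form $\bar v_i + v_i'$ with $v_i'$ of strictly lower weight. I would order $I_{\bs\la}$ compatibly with the weight filtration, so that every operator $B_{ij}|_{\Mal}$ is block-lower-triangular with respect to the weight grading, and on the associated graded it acts exactly as $B^0_{ij}$ acts on $(\V^S/I^\V_{\bs a})_{\bs\la}$ — again by \Ref{nilp formula}, since the correction term $e_{21}\otimes t^{j-1}$ strictly lowers weight. Consequently the matrix of each $B_{ij}$, restricted to $\Mal$, is conjugate (by a unipotent, weight-lowering change of basis) to a matrix whose diagonal blocks are the matrices of $B^0_{ij}$ on $(\V^S/I^\V_{\bs a})_{\bs\la}$. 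Since $W_\phi$ is precisely the common generalized eigenspace of the $B^0_{ij}$, $(i,j)\in J$, with eigenvalues $\phi(B^0_{ij})$, the dimension of the common generalized eigenspace of the corresponding $B_{ij}$ inside $\Mal$ equals $\dim W_\phi$ — generalized-eigenspace dimensions are preserved under conjugation and, for a commuting family, the dimension only sees the associated-graded diagonal action. That gives (i).

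For (ii) I would build the triangular basis of $\Malp$ explicitly. Start from the weight basis $u_0,\dots,u_{n-2k}$ of $W_\phi$; by the agreement of bases set up in Section \ref{sec More}, each $u_i$ is one of the $\bar v_j$'s (or a combination within a fixed weight), so it has a partner $w_{j}=\bar v_j+v_j'$ in the basis of $\Mal$ from Lemma \ref{lem on isotyp decomp M}(ii). These partners span a subspace of $\Mal$ that projects isomorphically onto $W_\phi$ under passing to the associated graded of the weight filtration; applying the spectral projector onto the $\phi$-generalized eigenspace of the family $\{B_{ij}\}_{(i,j)\in J}$ (which commutes with the weight filtration up to lower order) and then using the leading-term normalization, I get vectors $w_i\in\Malp$ with $w_i=u_i+u_i'$, $u_i'$ of strictly lower weight. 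Linear independence is immediate from the leading terms $u_i$, and since there are $n-2k+1=\dim W_\phi=\dim\Malp$ of them by part (i), they form a basis.

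The main obstacle — really the only delicate point — is verifying that the common generalized eigenspace of the \emph{noncommuting-looking but actually commuting} family $\{B_{ij}:(i,j)\in J\}$ on $\Mal$ has dimension exactly $\dim W_\phi$: one must be careful that imposing the eigenvalue conditions simultaneously does not drop or raise the dimension relative to the associated-graded computation. This is handled by the standard fact that for a commuting family of operators preserving a finite filtration, the common generalized eigenspace decomposition is compatible with the filtration and its graded pieces match those of the induced family on the associated graded; since $\B$ is commutative by Theorem \ref{T-thm} and the weight filtration is $\B$-stable up to lower order by \Ref{nilp formula}, this applies verbatim. Everything else is the same triangular bookkeeping already carried out in Lemmas \ref{lem on isotyp decomp} and \ref{lem on isotyp decomp M}.
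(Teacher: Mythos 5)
Your proposal is correct and follows essentially the same route the paper intends: the paper omits the proof, presenting the lemma as the analog of Lemmas \ref{lem on isotyp decomp} and \ref{lem on isotyp decomp M}, whose proofs rest exactly on the lower-triangularity, with respect to the $\glt$-weight filtration, coming from \Ref{nilp formula}, so that on the associated graded the $B_{ij}$ act as the $B^0_{ij}$ and (joint) generalized eigenspace dimensions and triangular bases carry over. Your extension of that argument from the single operator $B_{22}$ to the finite commuting family $\{B_{ij}\}_{(i,j)\in J}$, including the compatibility of the joint generalized eigenspace decomposition with the $\B$-stable filtration, is precisely the intended bookkeeping.
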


It is clear that $\Malp\subset \Mal$ is a $\B$-submodule.
We call the $\B$-module $\Malp$
{\it a Bethe eigenleaf} of $\Mal$.

\begin{lem}
\label{lem sum of leaves}
Let $\bs a\in \R^n$ be such that all roots of the polynomial
$u^n+\sum_j(-1)^ja_ju^{n-j}$ are distinct and real. Then
the $\B$-module $\Ma$ is the direct sum of its Bethe eigenleaves,
\beq
\label{sum of leaves}
\Ma\ = \ \sum_{\phi,\bs \la}\,\Malp\ .
\eeq
\end{lem}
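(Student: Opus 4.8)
The plan is to prove Lemma \ref{lem sum of leaves} by first establishing that the $\B^0$-action on $\Ma$ is diagonalizable (in the strong sense of having a basis of eigenvectors, not merely generalized eigenvectors) under the hypothesis on $\bs a$, and then using the decomposition into deformed isotypical components from Lemma \ref{lem on isotyp decomp M} together with the Bethe eigenleaf construction to organize the summands. The key point is that when all roots of $u^n+\sum_j(-1)^ja_ju^{n-j}$ are distinct and real, then by Lemma \ref{factor=weyl} the module $\Ma=\V^S/I^\V_{\bs a}$ is isomorphic to $\otimes_{s=1}^kW_{n_s}(b_s)$ with the $b_s$ distinct and real, and for such parameter values the $\B^0$-action on this tensor product is known to be diagonalizable with simple joint spectrum on $\sing$-spaces — this is precisely the reality/simplicity statement from \cite{MTV3} (or \cite{MTV5}) underlying the Schubert-calculus correspondence. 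So first I would invoke that result to write $\Ma=\oplus_\phi W_\phi$ as a genuine (not generalized) eigenspace decomposition for $\B^0$, where each $W_\phi$, being $\glt$-stable and of minimal multiplicity, is an irreducible $\glt$-module; this puts us exactly in the situation where Lemma \ref{lem on isotyp decomp W} applies to every pair $(\phi,\bs\la)$.

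Next I would show that $\sum_{\phi,\bs\la}\Malp$ is a direct sum and exhausts $\Ma$. For directness: by Lemma \ref{lem on isotyp decomp W}(i), $\Malp\subset\Mal$, and by Lemma \ref{lem on isotyp decomp M}(iii) the $\Mal$ over distinct $\bs\la$ are already in direct sum, so it suffices to check that for a fixed $\bs\la$ the subspaces $\Malp$ over the various $\phi$ are independent. This follows because $\Malp$ is the generalized eigenspace of the commuting operators $B_{ij}$, $(i,j)\in J$, with eigenvalue $\phi(B^0_{ij})$, and distinct $\phi$ give distinct eigenvalue systems (the $W_\phi$ were distinct eigenspaces of $\B^0$), so generalized eigenspaces of a commuting family attached to distinct eigenvalue tuples are independent. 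For exhaustion I would count dimensions: by Lemma \ref{lem on isotyp decomp W}(i), $\dim\Malp=\dim W_\phi$; summing over all $\phi$ with $W_\phi\subset(\Ma)_{\bs\la}$-isotypical part gives $\sum_\phi\dim W_\phi=\dim(\V^S/I^\V_{\bs a})_{\bs\la}=\dim\Mal$ by Lemma \ref{lem on isotyp decomp M}(i), and then summing over $\bs\la$ and using \Ref{dir dec M} gives $\sum_{\phi,\bs\la}\dim\Malp=\dim\Ma$. Combined with directness, this forces \Ref{sum of leaves}.

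The main obstacle — and the step that does real work rather than bookkeeping — is justifying that the $\B^0$-action on $\Ma$ is semisimple with each eigenspace $W_\phi$ an \emph{irreducible} $\glt$-module, since Lemma \ref{lem on isotyp decomp W} was stated only under exactly that hypothesis. This is not formal: it relies on the simplicity of the spectrum of the Bethe algebra $\B^0$ on tensor products of Weyl modules at distinct real evaluation points, which is the hard analytic input from \cite{MTV3}. I would cite that result explicitly (the statement that for real distinct $b_s$ the Bethe algebra $\B^0$ acts on $\sing(\otimes_s W_{n_s}(b_s))[\bs\la]$ with simple spectrum, equivalently the relevant Schubert intersection is transverse), transport it across the isomorphism of Lemma \ref{factor=weyl}, and note that $\glt$-equivariance of the $\B^0$-action then promotes each $\sing$-eigenline to a full irreducible $\glt$-submodule $W_\phi\cong L_{\bs\la}$. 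Once that is in hand, everything else is the dimension count and the disjointness of eigenvalue tuples sketched above, and the conclusion \Ref{sum of leaves} follows immediately.
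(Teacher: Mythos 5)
Your proposal is correct and follows essentially the same route as the paper: the paper's own (very terse) proof likewise invokes the simple spectrum of $\B^0$ on $\sing\,\Ma$ from \cite{MTV3} for distinct real roots and combines it with the triangular relation \Ref{nilp formula} between the generators of $\B$ and $\B^0$, which is exactly the mechanism behind Lemmas \ref{lem on isotyp decomp M} and \ref{lem on isotyp decomp W} that you use. Your additional details (irreducibility of each $W_\phi$ via $\glt$-equivariance, independence of generalized eigenspaces for distinct characters, and the dimension count through \Ref{dir dec M}) are just the natural fleshing-out of the paper's one-line conclusion.
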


\begin{proof}
Denote $ \sing\,\Ma \,=\, \{v\in \Ma\ | \ e_{21}v=0 \}$ the
subspace of singular vectors. By \cite{MTV3}, the action of $\B^0$ on
$ \sing\,\Ma$ has simple spectrum if all roots of the polynomial
$u^n+\sum_j(-1)^ja_ju^{n-j}$ are distinct and real. This fact and
property \Ref{nilp formula} imply the lemma.
\end{proof}

\subsection{The universal differential operator of $\V^S$}
\label{The universal differential operator of VS}

\begin{lem}[cf. Lemma 5.9 in \cite{MTV3}]
\label{elm on diff oper}
Denote $\D_{\V^S}$ the universal differential operator of
the $\B$-module $\V^S$. Then $\D_{\V^S}$ has the form
\beq
\D_{\V^S}\ =\ \der^2 - \bar B_1(u)\partial + \bar B_2(u)\ ,
\eeq
where
\vvn-.5>
\begin{gather*}
\bar B_1(u)\ = \ \frac{W'(u)}{W(u)}\ ,\qquad
\bar B_2(u)\ = \ \frac{U(u)}{W(u)}\ ,
\\[6pt]
W(u)\ = \ \prod_{i=1}^n(u-z_i)\ ,
\qquad
U(u)\ = \ \sum_{i=1}^{n}\,U_iu^{n-i} \ ,
\end{gather*}
with $U_i \in \End_{\C[z_1,\dots,z_n]^S}(\V^S)$ and
\beq
\label{U_0}
\rlap{\hss\rightline{\hfill\qedph
$U_1\ =\ B_{21}\ = \ \sum_{s=1}^n \,e_{21}^{(s)}\ .$\qed}}
\eeq
\end{lem}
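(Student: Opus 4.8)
The plan is to compute the universal differential operator $\D_{\V^S}$ directly from the defining formulas for $B_1(u)$ and $B_2(u)$ in Section \ref{bethesec}, using the explicit $\gltt$-action \Ref{action} on $\V^S$. First I would compute the action of $B_1(u) = -e_{11}(u)-e_{22}(u)$ on $\V^S$. By \Ref{action}, $e_{11}(u)+e_{22}(u)$ acts as multiplication by $\sum_{s=1}^n \frac{1}{u-z_s}$, since $e_{11}+e_{22}$ acts as the scalar $1$ on each tensor factor $V$; hence $\bar B_1(u) = -\sum_s (u-z_s)^{-1} = -W'(u)/W(u)$ with $W(u)=\prod_{i=1}^n(u-z_i)$. (The sign in the statement, $\D_{\V^S}=\der^2-\bar B_1(u)\der+\bar B_2(u)$, is then just a matter of how one packages the overall sign; I would follow the sign conventions of Lemma 5.9 in \cite{MTV3}, which this statement explicitly parallels.)

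Next I would analyze $\bar B_2(u)$. From \Ref{nilp formula}, $B_2(u) = B_2^0(u) + e_{21}(u)$, where $B_2^0(u) = e_{11}(u)e_{22}(u) - e_{21}(u)e_{12}(u) - e_{22}'(u)$. By Lemma 5.9 in \cite{MTV3}, the $\B^0$-part contributes $\bar B_2^0(u)$ with a common denominator $W(u)$ and numerator a polynomial in $u$ of degree $\le n-2$; the extra term $e_{21}(u)$ acts, by \Ref{action}, as $\sum_{s=1}^n \frac{e_{21}^{(s)}}{u-z_s}$, which also has common denominator $W(u)$ and a numerator that is a polynomial in $u$ of degree $n-1$ with operator coefficients in $\End_{\C[z_1,\dots,z_n]^S}(\V^S)$. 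Clearing denominators, I would write $\bar B_2(u) = U(u)/W(u)$ with $U(u) = \sum_{i=1}^n U_i u^{n-i}$; the degree bound here comes from the fact that $e_{21}(u)$ contributes the top degree term $u^{n-1}$ while $\bar B_2^0$ only reaches $u^{n-2}$.

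Finally, for \Ref{U_0} I would extract the coefficient of $u^{n-1}$ in $U(u) = W(u)\bar B_2(u)$. Since $\bar B_2^0(u)$ has numerator of degree $\le n-2$, the only contribution to the $u^{n-1}$ coefficient comes from $W(u)\cdot\sum_s \frac{e_{21}^{(s)}}{u-z_s} = \sum_s e_{21}^{(s)}\prod_{i\ne s}(u-z_i)$, whose leading term is $\left(\sum_s e_{21}^{(s)}\right)u^{n-1}$. Hence $U_1 = \sum_{s=1}^n e_{21}^{(s)}$, and by \Ref{nilp formula} this equals $B_{21} = B_{21}^0 + e_{21}\otimes t^0 = e_{21}\otimes t^0$ (note $B_{21}^0 = 0$ since $e_{11}(u)e_{22}(u)-e_{21}(u)e_{12}(u)-e_{22}'(u)$ has no $u^{-1}$ term in its $B_{2}^0$ part beyond those already absorbed — more carefully, one reads off the coefficient of $u^{-1}$ in $B_2(u)$), giving the claimed identity. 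I expect the only mildly delicate point to be bookkeeping the degree bounds and signs consistently with \cite{MTV3}; none of the steps presents a genuine obstacle, as everything reduces to the explicit formula \Ref{action} and the already-established structure of $\bar B_2^0$.
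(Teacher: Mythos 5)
Your proposal is correct and follows essentially the same route the paper intends: the lemma is stated with only a reference to Lemma 5.9 of \cite{MTV3}, the point being that the $\B^0$-part of $B_2(u)$ is handled exactly as there, while the extra term $e_{21}(u)$ from \Ref{nilp formula} acts by \Ref{action} as $\sum_s e_{21}^{(s)}/(u-z_s)$, contributing the degree-$(n-1)$ top term $\bigl(\sum_s e_{21}^{(s)}\bigr)u^{n-1}$ after clearing the denominator $W(u)$. Your identification $U_1=B_{21}=e_{21}\otimes t^0$ via $B_{21}^0=0$ (since $B_2^0(u)=O(u^{-2})$) and your sign bookkeeping are both right, so nothing is missing.
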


\subsection{The universal differential operator of $\Ma$}
\label{The universal differential operator of Ma}

\begin{lem}
\label{elm on diff oper Ma}
Let $\D_{\Ma}$ be the
universal differential operator of the $\B$-module
$\Ma$ and $y(u)$ an $\Ma$-valued function of $u$. Then
all solutions to the differential equation
$\D_{\Ma}y(u) = 0$ are $\Ma$-valued polynomials.
\end{lem}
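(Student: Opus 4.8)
The plan is to reduce the statement to a computation about the universal differential operator $\D_{\Ma}$ and then use the fact, established in Lemma \ref{elm on diff oper}, that its coefficients are rational functions with denominators that are powers of $W(u)=\prod_{i=1}^n(u-z_i)$, together with the fact that on $\Ma$ the symmetric functions $\sigma_i$ act as the scalars $a_i$, so that $W(u)$ acts as the fixed scalar polynomial $u^n+\sum_j(-1)^ja_ju^{n-j}$. First I would recall that by Lemma \ref{elm on diff oper} the operator $\D_{\V^S}$ has the form $\der^2-\bar B_1(u)\der+\bar B_2(u)$ with $\bar B_1(u)=W'(u)/W(u)$ and $\bar B_2(u)=U(u)/W(u)$, where $U(u)=\sum_{i=1}^n U_iu^{n-i}$ is a polynomial in $u$ of degree at most $n-1$ with operator coefficients. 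Passing to the quotient $\Ma=\V^S/I^\V_{\bs a}$ the scalars $a_i$ replace $\sigma_i$, so on $\Ma$ we get $W(u)=\prod_{s=1}^k(u-b_s)^{n_s}$ by \Ref{ab}, a fixed polynomial, and $\D_{\Ma}=\der^2-\dfrac{W'(u)}{W(u)}\der+\dfrac{U(u)}{W(u)}$ with $U(u)\in\End(\Ma)[u]$ of degree $\le n-1$.

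Next I would multiply the equation $\D_{\Ma}y(u)=0$ through by $W(u)$ to clear denominators, obtaining
\[
W(u)\,y''(u)-W'(u)\,y'(u)+U(u)\,y(u)=0.
\]
This is a linear ODE whose coefficients are $\Ma$-valued polynomials in $u$ (the operator coefficients act on the $\Ma$-valued unknown $y$), and whose leading coefficient $W(u)$ is a scalar polynomial of degree $n$ with its only zeros at $u=b_1,\dots,b_k$. The claim is that every solution is an $\Ma$-valued polynomial. The natural approach is a Fuchsian-type indicial analysis at each singular point $b_s$ and at $u=\infty$: I would argue that at each finite singularity the indicial equation forces the local exponents to be nonnegative integers (so no genuine poles and no logarithmic or fractional branching appear), and that at $u=\infty$ the growth is polynomially bounded. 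Concretely, one can look at the coefficient pattern: the combination $W(u)\der^2-W'(u)\der$ is, up to sign, the second symmetric power / Wronskian-type expression $W(u)^2\der\big(W(u)^{-1}\der(\cdot)\big)$ wait—more precisely $W(u)y''-W'(u)y'=W(u)^{2}\,\dfrac{d}{du}\!\Big(\dfrac{y'}{W(u)}\Big)$, which already signals that the natural ``flat'' section is $y'=W(u)\cdot(\text{something regular})$, i.e. the first-order part has no pole once one writes things correctly, and the only possible obstruction to polynomiality comes from $U(u)/W(u)$.

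The cleanest route, which I would actually carry out, avoids delicate local analysis by comparing with the Weyl module picture. By Lemma \ref{factor=weyl}, $\Ma\cong\otimes_{s=1}^kW_{n_s}(b_s)$ as a $\gltt$-module, hence as a $\B$-module. On such a tensor product of (graded) Weyl modules the universal differential operator $\D_{\Ma}$ is, by the explicit formulas for $B_i(u)$ in \Ref{nilp formula} together with the action \Ref{action}, a differential operator whose ``kernel'' is spanned by the $\Ma$-valued functions built from the generating vector; in the analogous $\B^0$ situation (Lemma 5.9 and its surroundings in \cite{MTV3}) the corresponding statement — that solutions are polynomial — is known, and the perturbation by $B_{2j}=B^0_{2j}+e_{21}\otimes t^{j-1}$ in \Ref{nilp formula} only adds a polynomial-in-$u$ term $e_{21}(u)$ times $W(u)$ to $U(u)$ after clearing denominators, since $e_{21}(u)=\sum_s e_{21}^{(s)}/(u-z_s)$ becomes, on $\Ma$, $\big(\sum U\text{-type terms}\big)/W(u)$; thus the polynomiality of solutions is inherited from the $\bs K=0$ case. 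The main obstacle I anticipate is making the last comparison rigorous: one must check that the nilpotent deformation genuinely does not introduce new poles at the $b_s$, i.e. that the extra term is of degree $\le n-1$ in $u$ after multiplying by $W(u)$, which comes down to the degree bound $\deg U(u)\le n-1$ already recorded in Lemma \ref{elm on diff oper} being preserved under the deformation — a bookkeeping check on \Ref{nilp formula} and \Ref{action} rather than a conceptual difficulty. Once that is in hand, clearing denominators and invoking the fact that a linear ODE with polynomial coefficients whose leading coefficient has only the prescribed zeros, all of whose local exponents are nonnegative integers, has a fundamental system of polynomial solutions of bounded degree, completes the proof.
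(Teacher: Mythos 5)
Your proposal has a genuine gap: neither of your two routes actually establishes polynomiality, and the one ingredient the paper's proof rests on is missing. The paper's argument is short but essentially nontrivial: by Theorem 8.4 of \cite{MTV2} every solution of $\D_{\Ma}y=0$ is a finite sum of quasi-exponentials $e^{cu}p(u)$ with $p(u)$ an $\Ma$-valued polynomial, and then the leading term of $\D_{\Ma}\bigl(e^{cu}p(u)\bigr)=0$ at $u\to\infty$ is $c^2$ times the top coefficient of $p$, forcing $c=0$. You never invoke this quasi-exponential kernel theorem or any substitute for it, and this is exactly where the content of the lemma lies. Your Fuchsian route cannot close the gap as sketched: after clearing denominators one has a polynomial-coefficient equation, and nonnegative integer exponents at the finite points $b_s$ do not exclude non-polynomial solutions (the equation $y''-y=0$ has no finite singularities and integer exponents everywhere, yet only exponential solutions). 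The decisive point is $u=\infty$, which for the nilpotent $\bs K$ is \emph{not} a regular singular point: by Lemma \ref{elm on diff oper} one has $u^2\bar B_2(u)\sim B_{21}\,u$ with $B_{21}=\sum_s e_{21}^{(s)}$ nonzero (though nilpotent) on $\Ma$, so your assertion that ``at $u=\infty$ the growth is polynomially bounded'' is precisely the statement to be proved, not an input.

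Your second route, inheriting polynomiality from the $\bs K=0$ case, is a non sequitur. Passing from $B_2^0(u)$ to $B_2(u)=B_2^0(u)+e_{21}(u)$ changes the differential equation, and polynomiality of solutions is not stable under polynomial (after clearing denominators) perturbations of the coefficients: compare $y''=0$ with $y''-y=0$. The fact that the deformation only modifies $U(u)$ within the degree bound $\deg U\le n-1$ tells you the \emph{coefficients} stay polynomial, not the \emph{solutions}. Building solutions of the deformed equation perturbatively from polynomial solutions of the undeformed one is exactly what is done later in Lemma \ref{lem on solutions}, but there the hypothesis that all solutions of the deformed equation are polynomials --- i.e.\ the present lemma --- is assumed, together with an indicial condition; using that mechanism here would be circular. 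To repair the proof you must either quote Theorem 8.4 of \cite{MTV2} and do the leading-term computation killing $c$, or supply an independent argument controlling the irregular singularity at infinity (for instance exploiting the nilpotency of $B_{21}$ on $\Ma$), neither of which appears in your write-up.
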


\begin{proof}
By Theorem 8.4 in \cite{MTV2}, every solution is a linear combination
of the functions of the form $e^{c u}p(u)$, where $p(u)$ is an
$\Ma$-valued polynomial and $c\in\C$. Writing $\D_{\Ma}e^{c u}p(u)=0$
and computing the leading term, we conclude that $c=0$.
\end{proof}

\subsection{The universal differential operator of a Bethe eigenleaf}
\label{operator of leaf}

\begin{lem}
\label{lem on leaf}
Let $\bs\la=(n-k,k)$.
Let $\Malp$ be a Bethe eigenleaf.
Then the universal differential operator $\D_{\Malp}$ of the $\B$-module
$\Malp$ has the form
\vvn.4>
\beq
\label{D-Malp}
\D_{\Malp}\ =\ \der^2 - \bar B_1(u)\partial + \bar B_2(u)\ ,
\vv.2>
\eeq
where
\vvn-.5>
\begin{gather*}
\bar B_1(u)\ = \ \frac{W'(u)}{W(u)}\ , \qquad
\bar B_2(u)\ = \ \frac{U(u)}{W(u)}\ ,
\\[6pt]
W(u)\ = \ u^n + \sum_{i=1}^n\,(-1)^i a_i u^{n-i}\ ,
\qquad
U(u)\ = \ \sum_{i=1}^{n}\, U_i u^{n-i} \ ,
\end{gather*}
with $U_i \in \End_{\C}(\Malp)$. Moreover,
\be
U_1\ =\ B_{21}|_\Malp\ =\ e_{21}|_\Malp
\ee
and for any $i>1$, we have
\beq
\label{U_i}
U_i\ =\ \sum_{j=0}^{n-2k}\, c_{ij}\,(B_{21}|_\Malp)^j
\eeq
where $c_{ij}\in\C$ and $c_{20}=k(n-k+1)$.
\end{lem}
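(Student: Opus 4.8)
The plan is to reduce everything to the universal differential operator of $\Ma$ via the inclusion $\Malp \subset \Mal \subset \Ma$ and the restriction maps on $\B$. First I would invoke Lemma \ref{elm on diff oper Ma}: since every solution of $\D_{\Ma}y=0$ is an $\Ma$-valued polynomial, and $\Malp$ is a $\B$-submodule of $\Ma$, the restriction $\D_{\Malp}$ also has only polynomial solutions. This forces the differential operator $\D_{\Malp}$ to be \emph{Fuchsian} with singularities only at the roots of $W(u)$ and at $u=\infty$, and with polynomial coefficients after clearing the common denominator $W(u)$. Concretely, from the definition of $\D^{\bs K}$ and formula \Ref{nilp formula} one reads off $\bar B_1(u) = e_{11}(u)+e_{22}(u)$ restricted to $\Malp$, which by the action formula \Ref{action} (passed to the quotient $\Ma = \V^S/I^\V_{\bs a}$) equals $W'(u)/W(u)$ with $W(u) = u^n + \sum_i (-1)^i a_i u^{n-i}$; this is the same computation as in Lemma \ref{elm on diff oper}, with $\si_s(\bs z)$ replaced by $a_s$ using the ideal $I_{\bs a}$. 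So the form \Ref{D-Malp} and the shape of $\bar B_1$ are essentially automatic.

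Next I would pin down $\bar B_2(u) = U(u)/W(u)$. Again by \Ref{nilp formula}, $\bar B_2(u) = \bar B_2^0(u) + e_{21}(u)|_\Malp$, where $\bar B_2^0(u)$ is the corresponding object for $\B^0$. Multiplying through by $W(u)$, we get $U(u) = W(u)\bar B_2^0(u) + W(u)e_{21}(u)|_\Malp$. The point is that $W(u)\bar B_2(u)$ is a polynomial in $u^{-1}$ that must actually be a polynomial in $u$ of degree $\le n-2$ (degree $n-2$ because the leading terms of $e_{11}(u)e_{22}(u)$ etc.\ cancel, as in the $\B^0$ case treated in \cite{MTV3}); hence $U(u) = \sum_{i=1}^n U_i u^{n-i}$ with $U_i \in \End_\C(\Malp)$, $U_0=0$. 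The identity $U_1 = B_{21}|_\Malp = e_{21}|_\Malp$ follows by extracting the coefficient of $u^{n-1}$: the $e_{21}(u)$ contribution gives $\sum_s e_{21}^{(s)}$, which on the evaluation-type module $\Ma$ and then on the eigenleaf acts as $e_{21}|_\Malp$ (this matches \Ref{U_0} in Lemma \ref{elm on diff oper}, pushed to the quotient). The value $c_{20} = k(n-k+1)$ is just the coefficient of $u^{n-2}$ coming from $B_{22}^0 = (e_{11}+1)e_{22} - e_{21}e_{12}$, which acts on the highest weight space of the irreducible $\glt$-module $L_{(n-k,k)}$ by the scalar $k(n-k+1)$ — and on all of $\Malp$ modulo lower-weight terms by Lemma \ref{lem on isotyp decomp W}(ii).

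The real content is \Ref{U_i}: that each $U_i$ for $i>1$ is a \emph{polynomial of degree $\le n-2k$ in $B_{21}|_\Malp = e_{21}|_\Malp$} with \emph{scalar} coefficients $c_{ij}$. For this I would argue as follows. The operators $U_i$ all commute (they lie in the image of the commutative algebra $\B$ in $\End(\Malp)$), so they are simultaneously block-triangularizable; by Lemma \ref{lem on isotyp decomp W}(ii) there is a weight-graded basis $w_0,\dots,w_{n-2k}$ of $\Malp$ in which $e_{21}|_\Malp$ is (modulo lower-weight corrections, which here vanish since the leaf is a genuine deformation of the \emph{irreducible} $L_{\bs\la}$) the standard lowering nilpotent Jordan block of size $n-2k+1$. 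The algebra generated by a single regular nilpotent of size $d+1$ is exactly $\A_d = \C[b]/\langle b^{d+1}\rangle$, and since $\Malp$ is cyclic over this algebra (generated by $w_0$, the top weight vector), \emph{any} operator that commutes with $e_{21}|_\Malp$ and preserves the weight grading down to lower weights is automatically a polynomial in $e_{21}|_\Malp$ with scalar coefficients — its coefficient $c_{ij}$ is determined by how it maps $w_0$ into $\bigoplus_j w_j$. Applying this to $U_i$ (which, being of degree $i-1 \ge 1$ in the grading on $\B$ and lowering weight, indeed maps $w_0$ into the span of the $w_j$, $j\ge 1$) gives the claimed formula.

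The main obstacle will be making the last step rigorous: one must verify that $e_{21}|_\Malp$ is genuinely a single Jordan block of maximal size (equivalently, that $\Malp$ is a \emph{cyclic} $\C[e_{21}]$-module of length $n-2k+1$), and that $U_i$ commutes with it and is weight-lowering, so that the cyclic-module argument applies and produces \emph{scalar} $c_{ij}$ rather than merely operators commuting with $e_{21}$. The cyclicity is exactly the statement that $\Malp$ is a deformation of the irreducible $\glt$-module $L_{(n-k,k)}$ on which $e_{21}$ acts with a single Jordan block — this is the substance of Lemma \ref{lem on isotyp decomp W}(i)-(ii), together with Corollary \ref{cor on A_d} identifying the algebra generated by $e_{21}$ on $L_{\bs\la}$ with $\A_{n-2k}$; so I would lean on those. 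Everything else is the kind of coefficient bookkeeping already carried out for $\B^0$ in \cite{MTV3} and reproduced in Lemma \ref{elm on diff oper}.
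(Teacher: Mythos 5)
Your proposal is essentially the paper's own proof: everything except \Ref{U_i} and the value $c_{20}$ is read off from Lemma~\ref{elm on diff oper} pushed to the quotient, \Ref{U_i} follows because the Bethe algebra of $\Malp$ is commutative and $B_{21}|_\Malp=e_{21}|_\Malp$ is a nilpotent with $(B_{21}|_\Malp)^{n-2k}\neq 0$ on the $(n-2k+1)$-dimensional space $\Malp$ (a single Jordan block of maximal size, so any commuting operator is a polynomial in it with complex coefficients), and $c_{20}=k(n-k+1)$ comes from \Ref{nilp formula} together with the scalar $k(n-k+1)$ by which $B^0_{22}$ acts. One aside should be dropped: your parenthetical claim that $U_i$ for $i>1$ is weight-lowering and sends $w_0$ into the span of the $w_j$ with $j\ge 1$ would force $c_{i0}=0$, contradicting $c_{20}=k(n-k+1)\neq 0$; it is also unnecessary, since the centralizer-of-a-regular-nilpotent argument alone already yields scalar coefficients $c_{ij}$.
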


\begin{proof} We need to prove \Ref{U_i}
and formula $c_{20}=k(n-k+1)$. Everything else follows from Lemma
\ref{elm on diff oper}.

The operators $U_i$ are elements of the Bethe algebra of $\Malp$. The
Bethe algebra of $\Malp$ contains the scalar operators and the nilpotent
operator $B_{21}|_\Malp$. On the complex $n-2k+1$-dimensional vector space
$\Malp$, we have $(B_{21}|_\Malp)^{n-2k} \neq 0$ and $(B_{21}|_\Malp)^{n-2k+1} =
0$. Hence,
every element of that algebra is a polynomial in
$B_{21}|_\Malp$ with complex coefficients. Formula \Ref{U_i} is proved.

Formula $c_{20}=k(n-k+1)$ follows from \Ref{nilp formula} and properties
of the universal differential operator of the algebra $\B^0$ associated with the
isotypical component
$(\V^S/{I^\V_{\bs a}})_{\bs\la}$, see \cite{MTV3}.
\end{proof}

\begin{lem}
\label{lem on solutions on leaf}
Let $\Malp$ be a Bethe eigenleaf and
$\D_{\Malp}$ the universal differential operator of the $\B$-module
$\Malp$, see Lemma \ref{lem on leaf}.
Then all
solutions to the $\Malp$-valued differential equation
$\D_\Malp y(u)=0$ are $\Malp$-valued polynomials.
\end{lem}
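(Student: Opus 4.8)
The plan is to deduce the statement from Lemma \ref{elm on diff oper Ma} by a restriction argument, rather than redoing the analytic input from \cite{MTV2}.

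First I would record that $\Malp$ is a $\B$-submodule of $\Ma$, as noted right after Lemma \ref{lem on isotyp decomp W}: $\Malp$ is a common generalized eigenspace of the commuting operators $B_{ij}\in\B$, hence it is $\B$-stable. Consequently the operator coefficients of the universal differential operator of $\Ma$ preserve $\Malp$, and their restrictions to $\Malp$ are exactly the operator coefficients of the universal differential operator of the $\B$-module $\Malp$. Concretely, comparing Lemma \ref{elm on diff oper} with Lemma \ref{lem on leaf}: on $\Ma=\V^S/I^\V_{\bs a}$ the operator $\sigma_i$ of multiplication by the $i$-th elementary symmetric function acts as the scalar $a_i$, so $W(u)=\prod_{i=1}^n(u-z_i)$ acts on $\Ma$, hence on $\Malp$, as the scalar polynomial $u^n+\sum_{i=1}^n(-1)^ia_iu^{n-i}$, and the operators $U_i$ for $\Ma$ restrict to the operators $U_i$ for $\Malp$. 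Therefore $\D_\Malp$ is literally the restriction of $\D_{\Ma}$ to $\Malp$-valued functions.

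Next, given an $\Malp$-valued function $y(u)$ with $\D_\Malp y(u)=0$, I would regard $y(u)$ as an $\Ma$-valued function. By the previous paragraph $\D_{\Ma}y(u)=\D_\Malp y(u)=0$, so Lemma \ref{elm on diff oper Ma} applies and $y(u)$ is an $\Ma$-valued polynomial. Since by construction $y(u)$ takes values in the subspace $\Malp$, each of its finitely many vector coefficients lies in $\Malp$, so $y(u)$ is an $\Malp$-valued polynomial, as claimed.

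I do not expect a serious obstacle here. The only point needing care is the claim in the first step that forming the universal differential operator of a module commutes with passing to a $\B$-submodule, i.e. that $\D_\Malp=\D_{\Ma}|_{\Malp}$; this is immediate from the definition of the universal differential operator once one knows $\Malp$ is $\B$-stable, but it is worth spelling out via the explicit formulas above so the reader sees that the scalar polynomial $W(u)$ of Lemma \ref{lem on leaf} is exactly the image in $\End(\Malp)$ of $\prod_{i=1}^n(u-z_i)$. A fully self-contained alternative would be to copy the proof of Lemma \ref{elm on diff oper Ma}: invoke Theorem 8.4 of \cite{MTV2} to write every solution of $\D_\Malp y(u)=0$ as a finite sum of terms $e^{cu}p(u)$ with $p(u)$ an $\Malp$-valued polynomial and $c\in\C$, and then match leading terms in $\D_\Malp\bigl(e^{cu}p(u)\bigr)=0$ to force $c=0$; but the restriction argument is shorter and avoids repeating the analysis.
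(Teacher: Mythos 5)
Your proposal is correct and is essentially the paper's own argument: the paper simply states that the lemma follows from Lemma \ref{elm on diff oper Ma}, which is exactly your restriction argument (with the useful extra detail that $\Malp$ is $\B$-stable, so $\D_\Malp$ is the restriction of $\D_{\Ma}$, and an $\Malp$-valued polynomial solution has coefficients in $\Malp$).
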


The lemma follows from Lemma \ref{elm on diff oper Ma}.

\medskip
To a Bethe eigenleaf $\Malp$, we assign a scalar differential operator
\vvn.3>
\beq
\label{D0-Malp}
\D_{\Malp,0}
\ =\ \der^2 - \frac{W'(u)}{W(u)}
\partial + \frac{\sum_{i=2}^{n} c_{i0}\, u^{n-i}}{W(u)}\ ,
\vv.3>
\eeq
see notation in Lemma~\ref{lem on leaf}. It is clear, that
any solution to the differential equation
$\D_{\Malp,0}\,y(u)=0$ is a polynomial of degree $k$ or $n-k+1$.

Let $w_i,i\in 0,\dots,n-2k$ be a basis of $\Malp$ indicated in Lemma
\ref{lem on isotyp decomp W}. Let $y(u) = \sum_i y_i(u)w_i$ be a
solution to the differential equation
$\D_{\Malp}y(u)=0$, then $y_0(u)$ is a solution to the differential equation
$\D_{\Malp,0}\,y(u)=0$.

\medskip
Let $F_0(u),G_0(u) \in \C[u]$ be polynomials of degrees $k$ and $n-k+1$,
respectively. Then the kernel of the differential operator
\beq
\label{DFG}
\D_{F_0,G_0}
\ =\ \der^2 - \frac{\Wr'(F_0,G_0)}{\Wr(F_0,G_0)}
\partial + \frac{\Wr(F'_0,G'_0)}{\Wr(F_0,G_0)}\
\eeq
is the two-dimensional subspace of $\C[u]$ generated by $F_0(u),G_0(u)$.

\begin{lem}
\label{lem on pairs of polynomials and leaves}
For any generic pair of polynomials $F_0(u),G_0(u) \in \C[u]$
with
$\deg F_0(u)=k,\,\deg G_0(u)=n-k+1$, there exists a unique Bethe eigenleaf
$\Malp$, such that
\linebreak $\D_{\Malp,0}\,=\,\D_{F_0,G_0}$.
\end{lem}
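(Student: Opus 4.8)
The plan is to identify the scalar operator $\D_{\Malp,0}$ with the universal differential operator of the $\B^0$-module $W_\phi$, and then to read the statement off the correspondence between $\B^0$-eigenvalues and spaces of polynomials established in \cite{MTV3}.

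First, observe that $\D_{\Malp,0}=\D_{F_0,G_0}$ if and only if $\ker\D_{\Malp,0}=\C F_0\oplus\C G_0$, since a monic second-order linear differential operator is recovered from its two-dimensional solution space by the Wronskian formulas of \Ref{DFG}. By the remark preceding the lemma, $\ker\D_{\Malp,0}$ is a two-dimensional space of polynomials with a basis of degrees $k$ and $n-k+1$. Comparing first-order coefficients shows, in addition, that such an equality forces $\Wr(F_0,G_0)$ to be a nonzero constant multiple of $W(u)=u^n+\sum_i(-1)^ia_iu^{n-i}$; thus the pair $(F_0,G_0)$ fixes $\bs a$, and we may assume $\bs a$ chosen accordingly. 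It then remains to prove that $\Malp\mapsto\ker\D_{\Malp,0}$ is a bijection from the Bethe eigenleaves of $\Ma$ onto a dense open set of two-dimensional spaces of polynomials admitting a basis of degrees $k,\,n-k+1$.

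Second, I would check that $\D_{\Malp,0}$ is the universal differential operator of the $\B^0$-module $W_\phi$ (a scalar operator, since $\B^0$ acts on $W_\phi$ through $\phi$). By \Ref{nilp formula}, $B_{2j}=B^0_{2j}+e_{21}\otimes t^{j-1}$, so on $\Malp$ the operator $B_{2j}$ acts as the scalar $\phi(B^0_{2j})$ plus a polynomial in the nilpotent operator $B_{21}|_{\Malp}$ with vanishing constant term; comparing with Lemma~\ref{lem on leaf}, the constants $c_{i0}$ appearing in \Ref{D0-Malp} are precisely $\phi(B^0_{2i})$, whence $\D_{\Malp,0}$ is indeed that universal differential operator. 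By \cite{MTV3}, this operator equals $\D_{F_0,G_0}$, where $\C F_0\oplus\C G_0$ is the two-dimensional space of polynomials assigned by the $\B^0$-correspondence to the eigenvalue $\phi$, and $\deg F_0=k$, $\deg G_0=n-k+1$ because $W_\phi\simeq L_{\bs\la}$ with $\bs\la=(n-k,k)$.

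Finally, I would invoke the completeness results of \cite{MTV3}: for generic $\bs a$ --- in particular when $W(u)$ has distinct roots, as in the proof of Lemma~\ref{lem sum of leaves} --- the map sending an eigenvalue $\phi$ of $\B^0$ on $\sing\Ma$ with $W_\phi$ irreducible of highest weight $(n-k,k)$ to the space of polynomials annihilated by the universal differential operator of $W_\phi$ is a bijection onto the set of two-dimensional spaces of polynomials having a basis of degrees $k,\,n-k+1$ and Wronskian proportional to $W(u)$. Since the Bethe eigenleaves $\Malp$ are, by construction (Lemma~\ref{lem on isotyp decomp W}), in bijection with such eigenvalues $\phi$, and $\D_{\Malp,0}$ is identified as above, the composite $\Malp\mapsto\ker\D_{\Malp,0}$ is the desired bijection onto a dense open set; existence and uniqueness of $\Malp$ for a generic pair $(F_0,G_0)$ follow. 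The one genuinely hard ingredient --- bijectivity of the $\B^0$-eigenvalue/Wronski-map correspondence, i.e. completeness of the Bethe ansatz together with simplicity of the spectrum for generic parameters --- is not reproved here but imported from \cite{MTV3}; the work specific to this paper is only the bookkeeping via \Ref{nilp formula} identifying $\D_{\Malp,0}$ with the $\B^0$-operator on $W_\phi$.
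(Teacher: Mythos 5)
Your proposal is correct and follows essentially the same route as the paper: fix $\bs a$ from $\Wr(F_0,G_0)$, import from \cite{MTV3} the generic existence and uniqueness of a $\B^0$-eigenvector in $\sing\Ma$ whose eigenvalues realize $\D_{F_0,G_0}$, and use \Ref{nilp formula} to identify $\D_{\Malp,0}$ with that scalar operator and hence the eigenleaf with the eigenvalue $\phi$. The paper's proof is just a compressed version of this argument, so your extra bookkeeping (e.g.\ matching the constants $c_{i0}$ with the $\phi(B^0_{2j})$ via the expansion of $U(u)/W(u)$) is a faithful elaboration rather than a different method.
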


\begin{proof}
For $F_0(u),G_0(u) \in \C[u]$ with
$\deg F_0(u)=k,\,\deg G_0(u)=n-k+1$, define
$\bs a=(a_1,\dots,a_n)$ by the formula
\bea
\Wr(F_0(u),G_0(u))\ = \ (n-2k)\,(u^n + \sum_{j=1}^n(-1)^ja_ju^{n-j})\ .
\eea
By \cite{MTV3},
for every generic pair $F_0(u),G_0(u) \in \C[u]$, there exists a unique
eigenvector $v\in \sing\,\Ma$ of the Bethe algebra $\B^0$
with
\bea
B^0_{ij}v\ =\ c_{ij}v
\eea
for some $c_{ij}\in\C$ and all $(i,j)$, such that
\bea
\D_{F_0,G_0}\ =\
\der^2 - \sum_{j}c_{1j}u^{-j}
\partial + \sum_{j}c_{2j}u^{-j}\ .
\eea
This fact and
property \Ref{nilp formula} imply the lemma.
\end{proof}

\section{Algebra $\Ol$}
\label{sec Ol}

\subsection{Wronskian conditions}
\label{Wronskian conditions II}
Fix nonnegative integers $k$ and $d$. Define an algebra
$\A_d = \C[b]/\langle b^{d+1}\rangle$, with $b$ a generator of $\A_d$.
Consider the expressions:
\begin{align}
\label{new f,g}
f(u) & {}\,=\, \sum_{i=0}^{k-1}f_iu^i + u^k +
\sum_{i=1}^{d} \tilde f_{k+i}b^iu^{k+i}\ ,
\\[4pt]
g(u) & \,{}=\, \sum_{i=0}^{k-1}g_iu^i + \sum_{i=k+1}^{k+d}g_iu^i + u^{k+d+1} +
\sum_{i=1}^{d} \tilde g_{k+d+1+i}b^iu^{k+d+1+i}\ .
\notag
\end{align}
These are polynomials in $u, f_i, g_i, \tilde f_{k+i},\tilde g_{k+d+1+i}$
with coefficients in $\A_d$.

Consider the polynomials
\beq
\label{f and g}
\Wr(f(u),g(u)) = \sum_{j=0}^{2k+3d} U_ju^j\ ,
\qquad
\Wr(f'(u),g'(u))= \sum_{j=0}^{2k+3d-2} V_ju^j\ ,
\eeq
where $U_j,V_j$ are suitable polynomials in
$ f_i, g_i,\,\tilde f_{k+i}b^i,\, \tilde g_{k+d+1+i}b^i$\,
with integer coefficients.

It is easy to see that
\bea
U_i\,=\,0 \ , \qquad V_{i-2}\,=\,0\ , \qquad {\rm for}\ i> 2k+2d \ ,
\eea

\begin{thm}
\label{thm on algebra II}
Consider the system of $2d$ equations
\begin{align}
\label{main eqns II}
U_{2k+d+1} & \,{}=\, 0\ , \qquad V_{2k+d-2+1}- U_{2k+d}b = 0\ ,
\\
U_{2k+d+i} & \,{}=\,0\ ,\qquad V_{2k+d-2+i} = 0\ ,\qquad{ for}
\ {}\ i=2,\dots,d\ ,\
\notag
\end{align}
with respect to
$\tilde f_{k+i}b^i, \tilde g_{k+d+1+i}b^i$, $i=1,\dots,d$.
Then there exist $2d$ polynomials
$\tilde\phi_{k+i}$, $\tilde\psi_{k+d+1+i}$ in $2k+d$ variables
\vvn.3>
\beq
\label{f,g}
f_i,\ i=0,\dots,k-1, \quad {\rm and} \quad g_i,\ i=0,\dots,k-1,k+1,\dots,k+d,
\vv.3>
\eeq
with coefficients in $\A_d$, such that system \Ref{main eqns II} is equivalent
to the system of $2d$ equations:
\vvn.3>
\beq
\label{main eqns II add}
\tilde f_{k+i} b^i\ =\ \tilde\phi_{k+i}\ , \qquad
\tilde g_{k+d+1+i} b^i\ =\ \tilde\psi_{k+d+1+i}\ , \qquad i=1,\dots,d\ .
\vv.3>
\eeq
\end{thm}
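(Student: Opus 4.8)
The plan is to solve the $2d$ equations \Ref{main eqns II} uniquely for the $2d$ quantities
\be
x_i\ :=\ \tilde f_{k+i}\,b^i\ ,\qquad y_i\ :=\ \tilde g_{k+d+1+i}\,b^i\ ,\qquad i=1,\dots,d\ ,
\ee
expressing each as a polynomial in the $2k+d$ variables \Ref{f,g} with coefficients in $\A_d$, and working one layer at a time along the $b$-adic filtration of $\A_d$. Since $x_i,y_i\in b^i\A_d$, the pair $(x_i,y_i)$ carries $2(d+1-i)$ scalar parameters, for a total of $d(d+1)$. On the equation side, write $f=f_{\mathrm{low}}+f_{\mathrm{high}}$ with $f_{\mathrm{high}}=\sum_{i=1}^d x_iu^{k+i}$, and $g=g_{\mathrm{low}}+g_{\mathrm{high}}$ with $g_{\mathrm{high}}=\sum_{i=1}^d y_iu^{k+d+1+i}$; matching powers of $u$ shows that every monomial $x_iy_j$, $x_ig_\bullet$ or $y_jf_\bullet$ entering the coefficient of $u^{2k+d+\ell}$ in $\Wr(f,g)$ carries at least $b^\ell$, and similarly in $\Wr(f',g')$. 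Hence $U_{2k+d+\ell}\in b^\ell\A_d$ and $V_{2k+d-2+\ell}\in b^\ell\A_d$, and since $U_{2k+d}-(d+1)\in b\A_d$ also $V_{2k+d-1}-U_{2k+d}b\in b\A_d$; so \Ref{main eqns II} amounts to exactly $d(d+1)$ scalar equations, matching the number of unknowns.

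First I would expand the Wronskians, using $\Wr(u^a,u^c)=(c-a)u^{a+c-1}$ and $\Wr(u^a,p(u))=u^{a-1}\bigl(up'(u)-ap(u)\bigr)$, to get, for $\ell=1,\dots,d$,
\be
U_{2k+d+\ell}\ =\ (d+1-\ell)\,x_\ell\ +\ (d+1+\ell)\,y_\ell\ +\ L^U_\ell\ +\ Q^U_\ell\ ,
\ee
where $L^U_\ell$ is a combination of the $x_j,y_j$ with $j>\ell$ having coefficients polynomial in the variables \Ref{f,g}, while $Q^U_\ell$ is quadratic in the $x_i,y_i$ with $i<\ell$ (the constant leading coefficients come from $g_{k+d+1}=1$ in $g_{\mathrm{low}}$ and the leading term $u^k$ of $f_{\mathrm{low}}$); an analogous formula holds for $V_{2k+d-2+\ell}$ with linear leading part $\alpha_\ell x_\ell+\beta_\ell y_\ell$, where $\alpha_\ell=(k+\ell)(k+d+1)(d+1-\ell)$ and $\beta_\ell=k(k+d+1+\ell)(d+\ell+1)$, together with an extra additive term $-(d+1)b$ (carried over from $-U_{2k+d}b$) in the case $\ell=1$ only. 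Now take the coefficient of $b^m$ in \Ref{main eqns II}: the equations with index $\ell>m$ hold automatically modulo $b^{m+1}$, and those with $\ell\le m$ become a linear system for the $2m$ scalars $(x_i)_m,(y_i)_m$, $i=1,\dots,m$ (the $b^m$-coefficients), because $L^U_\ell$, $Q^U_\ell$ and the shifted term $-U_{2k+d}b$ involve only $b^p$-coefficients with $p<m$, already found at earlier layers. Ordering equations and unknowns by $\ell$, this $2m\times2m$ system is block upper triangular with $\ell$-th diagonal block $\bigl(\begin{smallmatrix} d+1-\ell & d+1+\ell\\ \alpha_\ell & \beta_\ell\end{smallmatrix}\bigr)$.

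The point is that this block is invertible: a short computation gives
\be
\det\begin{pmatrix} d+1-\ell & d+1+\ell\\ \alpha_\ell & \beta_\ell\end{pmatrix}\ =\ -\,\ell\,(d+1)\,(d+1-\ell)\,(d+\ell+1)\ \neq\ 0\qquad\text{for }1\le\ell\le d\ .
\ee
Thus the $m$-th layer system has determinant $\prod_{\ell=1}^m\bigl(-\ell(d+1)(d+1-\ell)(d+\ell+1)\bigr)$, a nonzero integer independent of the variables \Ref{f,g}, so back substitution uniquely determines $(x_i)_m,(y_i)_m$ as polynomials in those variables. Running $m$ from $1$ to $d$ assembles $x_i$ and $y_i$ into polynomials $\tilde\phi_{k+i},\tilde\psi_{k+d+1+i}$ in the variables \Ref{f,g} with coefficients in $\A_d$; uniqueness at each layer shows that \Ref{main eqns II} holds if and only if $\tilde f_{k+i}b^i=\tilde\phi_{k+i}$ and $\tilde g_{k+d+1+i}b^i=\tilde\psi_{k+d+1+i}$, which is \Ref{main eqns II add}.

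The genuinely laborious step is expanding $\Wr(f,g)$ and $\Wr(f',g')$ far enough to read off the diagonal $2\times2$ blocks and to confirm the determinant formula; everything else is bookkeeping with the $b$-adic filtration. One feature makes the statement non-formal: modulo $b$ the whole system collapses to $0=0$ and its linearisation in the free parameters $\tilde f_{k+i},\tilde g_{k+d+1+i}$ vanishes, so no naive implicit-function or Hensel argument applies — it is precisely the term $-(d+1)b$ in the equation $V_{2k+d-1}-U_{2k+d}b=0$, together with the finer $b$-adic structure, that forces a unique nonzero solution.
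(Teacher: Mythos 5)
Your proof is correct and takes essentially the same route as the paper: both isolate the lowest-order terms in $b$ of $U_{2k+d+\ell}$ and $V_{2k+d-2+\ell}$, which are linear in $\tilde f_{k+\ell}b^\ell,\ \tilde g_{k+d+1+\ell}b^\ell$ with constant invertible $2\times2$ blocks, and then eliminate successively using the nilpotency of $b$; you organize the elimination layer by layer in powers of $b$ via a block upper triangular system, while the paper solves the $\ell$-th pair of equations first and then iterates the substitution, which is only a difference of bookkeeping. A nice extra is that you verify the nondegeneracy explicitly, $\det=-\ell(d+1)(d+1-\ell)(d+1+\ell)\neq0$ for $1\le\ell\le d$ (with the correct second-row coefficients $(k+\ell)(k+d+1)(d+1-\ell)$ and $k(k+d+1+\ell)(d+1+\ell)$), whereas the paper only asserts that the determinant is nonzero.
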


\medskip
Let $\mc E$ be a $\C$-algebra. Abusing notation,
we will write $b^jy$ instead of $b^j \otimes y\in \A_d\otimes\mc E$
for any $0\leq j\leq d$ and $y\in\mc E$.

We denote by $\C[\{f,g,\tilde f,\tilde g\}]$ the polynomial algebra in
all variables $f_i,\,g_i,\,\tilde f_{k+i}, \,\tilde g_{k+d+1+i}$ appearing
in~\Ref{new f,g}, and by $\C[\{f,g\}]$ the polynomial algebra of all variables
$f_i,g_i$ described in \Ref{f,g}.

\medskip
Let
\beq
\label{Ckd}
\mc C_{k,d}\,\subset\,\A_d\otimes
\C[\{f,g,\tilde f,\tilde g\}]
\eeq
be the $\C$-subalgebra generated by all elements
$1,\,f_i,\,g_i,\,\tilde f_{k+i}b^i, \,\tilde g_{k+d+1+i}b^i, b$.

\begin{cor}
\label{cor on algebra II}
Consider the ideal $ I$ in $\mc C_{k,d}$ generated by
the left hand sides of equations \Ref{main eqns II}. Then the quotient algebra
$\mc C_{k,d}/I$ is canonically isomorphic to the algebra
$\A_d\otimes \C[\{f,g\}]$.
\end{cor}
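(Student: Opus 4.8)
The plan is to derive Corollary \ref{cor on algebra II} directly from Theorem \ref{thm on algebra II} by an explicit identification of generators. First I would observe that $\mc C_{k,d}$ is, by its very definition, generated over $\C$ by the elements $1, f_i, g_i, \tilde f_{k+i}b^i, \tilde g_{k+d+1+i}b^i, b$; since $b^{d+1}=0$ in $\A_d$, the element $b$ generates $\A_d$ and the subalgebra generated by $1, f_i, g_i$ is a homomorphic image of $\C[\{f,g\}]$. In fact $\mc C_{k,d}$ sits inside $\A_d\otimes\C[\{f,g,\tilde f,\tilde g\}]$, so the variables $f_i, g_i$ remain algebraically independent and the subalgebra they generate is exactly $\C[\{f,g\}]$; thus there is a natural surjection $\A_d\otimes\C[\{f,g\}]\twoheadrightarrow \mc C_{k,d}$ sending $b^j\otimes p(f,g)$ to $b^j p(f,g)$. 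The point of the corollary is that, after quotienting by $I$, this surjection becomes an isomorphism.

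Next I would use Theorem \ref{thm on algebra II}: the generators of $I$ are the left-hand sides of \Ref{main eqns II}, and by the theorem the ideal they generate is the same as the ideal generated by the $2d$ elements $\tilde f_{k+i}b^i-\tilde\phi_{k+i}$ and $\tilde g_{k+d+1+i}b^i-\tilde\psi_{k+d+1+i}$, where the $\tilde\phi, \tilde\psi$ are polynomials in the $f_i, g_i$ with coefficients in $\A_d$ — that is, elements of $\A_d\otimes\C[\{f,g\}]\subset \mc C_{k,d}$. Therefore in $\mc C_{k,d}/I$ every generator $\tilde f_{k+i}b^i$ (resp. $\tilde g_{k+d+1+i}b^i$) is congruent to an element of the subalgebra generated by $1, f_i, g_i, b$. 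Consequently $\mc C_{k,d}/I$ is generated by the images of $1, f_i, g_i, b$ alone, so the composite $\A_d\otimes\C[\{f,g\}]\to \mc C_{k,d}\to \mc C_{k,d}/I$ is surjective.

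To finish I need injectivity of that composite. Here I would argue that the relations imposed by $I$ only eliminate the "extra" generators $\tilde f_{k+i}b^i, \tilde g_{k+d+1+i}b^i$ and impose no relation among $b, f_i, g_i$. Concretely: consider the $\C$-algebra map $\A_d\otimes\C[\{f,g,\tilde f,\tilde g\}]\to \A_d\otimes\C[\{f,g\}]$ that is the identity on $\A_d$ and on the $f_i, g_i$, and that sends $\tilde f_{k+i}\mapsto b^{-?}$... — since $b$ is not invertible one instead defines it on $\mc C_{k,d}$ by sending $\tilde f_{k+i}b^i\mapsto \tilde\phi_{k+i}$, $\tilde g_{k+d+1+i}b^i\mapsto\tilde\psi_{k+d+1+i}$, and $f_i\mapsto f_i$, $g_i\mapsto g_i$, $b\mapsto b$. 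One checks this is a well-defined algebra homomorphism $\mc C_{k,d}\to \A_d\otimes\C[\{f,g\}]$ (the only checks are that it respects the generating relations of $\mc C_{k,d}$ as a subalgebra, which is routine), that it kills $I$ by construction (each generator of $I$ maps to $\tilde\phi_{k+i}-\tilde\phi_{k+i}=0$ etc., using Theorem \ref{thm on algebra II}), hence descends to $\mc C_{k,d}/I\to \A_d\otimes\C[\{f,g\}]$, and that it is a two-sided inverse to the surjection above. The main obstacle is this last step: one must be careful that the assignment $\tilde f_{k+i}b^i\mapsto\tilde\phi_{k+i}$ is consistent with \emph{all} relations holding in the subalgebra $\mc C_{k,d}$ — in particular relations of the form $b^{d+1}=0$ forcing $b^{d+1-i}\cdot(\tilde f_{k+i}b^i)=0$, which requires $b^{d+1-i}\tilde\phi_{k+i}=0$; this is exactly the kind of compatibility that Theorem \ref{thm on algebra II} guarantees since the $\tilde\phi$ are built to solve system \Ref{main eqns II} over $\A_d$. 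Once that consistency is in hand, the two maps are mutually inverse and the corollary follows.
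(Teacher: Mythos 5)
Your route is the intended one: the paper states the corollary with no separate proof, as an immediate elimination of the generators $\tilde f_{k+i}b^i$, $\tilde g_{k+d+1+i}b^i$ via Theorem \ref{thm on algebra II}, and your surjectivity argument is exactly that elimination. Two points need tightening, one cosmetic and one substantive. Cosmetic: the natural map $\A_d\otimes \C[\{f,g\}]\to\mc C_{k,d}$, $b^j\otimes p\mapsto b^jp$, is an embedding onto the subalgebra generated by $b,f_i,g_i$, not a surjection onto $\mc C_{k,d}$; what you need (and do prove) is surjectivity of its composite with $\mc C_{k,d}\to\mc C_{k,d}/I$.

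The substantive point is the well-definedness of your retraction $\mc C_{k,d}\to\A_d\otimes\C[\{f,g\}]$, which carries the whole burden of injectivity. Since $\mc C_{k,d}$ is a subalgebra of $\A_d\otimes\C[\{f,g,\tilde f,\tilde g\}]$ and not free on the listed generators, its defining relations are precisely the monomial relations: any product of $b$, $\tilde f_{k+i}b^i$, $\tilde g_{k+d+1+i}b^i$ whose total weighted $b$-degree exceeds $d$ (weight $1$ for $b$, weight $i$ for the tilded generators with index shift $i$) vanishes. Your assignment $\tilde f_{k+i}b^i\mapsto\tilde\phi_{k+i}$, $\tilde g_{k+d+1+i}b^i\mapsto\tilde\psi_{k+d+1+i}$ respects all of these if and only if $\tilde\phi_{k+i}$ and $\tilde\psi_{k+d+1+i}$ lie in $b^i\,\bigl(\A_d\otimes\C[\{f,g\}]\bigr)$. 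This is not guaranteed by the statement of Theorem \ref{thm on algebra II}, which only says they are polynomials in the $f_i,g_i$ with coefficients in $\A_d$, and "they solve the system over $\A_d$" is not by itself the reason: if, hypothetically, $\tilde\phi_{k+1}$ had a monomial of $b$-degree $0$, then $b^d\cdot\bigl(\tilde f_{k+1}b-\tilde\phi_{k+1}\bigr)=-\,b^d\tilde\phi_{k+1}$ would be a nonzero element of $I\cap\bigl(\A_d\otimes\C[\{f,g\}]\bigr)$ and the claimed isomorphism would fail. The divisibility does hold, but you must extract it from the proof of the theorem rather than its statement: the iteration there produces $\tilde f_{k+i}b^i=c_{k+i}b^i+W_{k+i}$ with every monomial of $W_{k+i}$ of $b$-degree at least $i+1$ (see \Ref{eq 3}, \Ref{eq 6}), so every monomial of $\tilde\phi_{k+i}$, $\tilde\psi_{k+d+1+i}$ has $b$-degree at least $i$. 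With that recorded, all the monomial relations (including products such as $\tilde f_{k+i}b^i\cdot\tilde f_{k+j}b^j$ with $i+j>d$) are respected because the images land in $b^i$ times the target and $b^{d+1}=0$ there; your retraction is then well defined, kills $I$ by \Ref{main eqns II add}, restricts to the identity on $\A_d\otimes\C[\{f,g\}]$, and the corollary follows as you say.
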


\medskip
\noindent
{\it Proof of Theorem \ref{thm on algebra II}.}\
The four equation in \Ref{main eqns II} have the following form
\vvn.3>
\begin{gather}
\label{eq 1}
d\tilde f_{k+1}b +
(d+2)\tilde g_{k+d+2}b + Y_{2k+d+1}\,=\,0\ ,
\\[5pt]
d(k+1)(k+d+1)\tilde f_{k+1}b + (d+2)k(k+d+2)\tilde g_{k+d+2}b -
(d+1)b + Z_{2k+d-2+1}\,=\,0\ ,
\notag
\\[-8pt]
\notag
\end{gather}
\begin{align}
\label{eq 2}
(d+1-i) & \tilde f_{k+i}b^i +
(d+1+i)\tilde g_{k+d+1+i}b^i +{}
\\
& \sum_{j=1}^i\,(d+1+i-2j)\tilde f_{k+j}b^j\tilde g_{k+d+1+i-j}b^{i-j} +
Y_{2k+d+i}\,=\, 0\ ,
\notag
\\[6pt]
(d+1-i)(k+i) & (d+k+1)\tilde f_{k+i}b^i +
(d+1+i)(k+i)(d+k+1)\tilde g_{k+d+1+i}b^i +{}
\notag
\\
\sum_{j=1}^i\,(d+1+i &{}- 2j)(k+j)(d+k+1+i-j)
\tilde f_{k+j}b^j\tilde g_{k+d+1+i-j}b^{i-j} + Z_{2k+d-2+i}\,=\,0\ .
\notag
\end{align}
In equations \Ref{eq 1}, $Y_{2k+d+1}$ and $Z_{2k+d-1}$
are suitable polynomials in
the variables $f_j,g_j,$ $\tilde f_{k+j}b^j,$ $ \tilde g_{k+d+1+j}b^j$
such that every monomial of $Y_{2k+d+1}$ and every monomial of $Z_{2k+d-1}$
has degree at least two with respect to $b$.
In equations \Ref{eq 2},
$Y_{2k+d+i}$ and $Z_{2k+d-2+i}$
are suitable polynomials in
the variables $f_j,g_j,\tilde f_{k+j}b^j, \tilde g_{k+d+1+j}b^j$
such that every monomial of $Y_{2k+d+i}$ and
every monomial of $Z_{2k+d-2+i}$ has degree at least $i+1$ with respect to $b$.

Transforming equations \Ref{main eqns II} to
equations \Ref{eq 1} and \Ref{eq 2} we distinguished the leading terms
(with respect to powers of $b$)
of the polynomials in \Ref{main eqns II}.

The variables
$\tilde f_{k+1}b,\, \tilde g_{k+d+2}b$ enter linearly the two equations
in \Ref{eq 1}. The determinant of this $2\times 2$
system is nonzero.
Solving this linear system, gives
\begin{align}
\label{eq 3}
\tilde f_{k+1}b & \,{}=\, c_{k+1}b + W_{k+1}\ ,
\\
\tilde g_{k+d+2}b & \,{}=\, c_{k+d+2}b + W_{k+d+2}\ ,
\notag
\end{align}
where $c_{k+1},c_{k+d+2}\in \C$ and
$W_{k+1}, W_{k+d+2}$
are suitable polynomials in
the variables $f_j,g_j,$ $\tilde f_{k+j}b^j,$ $ \tilde g_{k+d+1+j}b^j$
such that every monomial of $W_{k+1}$ and every monomial of $W_{k+d+2}$
has degree at least two with respect to $b$.

Consider the two equations of \Ref{eq 2} corresponding to $i=2$,
\begin{gather}
\label{eq 4}
a_1\tilde f_{k+2}b^2 +
a_2\tilde g_{k+d+3}b^2
+ a_3\tilde f_{k+1}b\tilde g_{k+d+2}b + Y_{2k+d+2}\,=\,0\ ,
\\
b_1\tilde f_{k+2}b^2 + b_2\tilde g_{k+d+3}b^2
+ b_3\tilde f_{k+1}b\tilde g_{k+d+2}b + Z_{2k+d} = 0\ ,
\notag
\end{gather}
where the numbers $a_j,b_j$ are determined in \Ref{eq 2}.
It is easy to see that the determinant of the matrix
$\left( \begin{array}{clcr}
a_1 & a_2
\\
b_1 & b_2
\end{array} \right)$
is nonzero.
Replace in \Ref{eq 4} the product $\tilde f_{k+1}b\tilde g_{k+d+2}b$ with
\be
(c_{k+1}b + W_{k+1})(c_{k+d+2}b + W_{k+d+2})\ .
\ee
Then solving
the linear system in \Ref{eq 4} with respect to
$\tilde f_{k+2}b^2$, $\tilde g_{k+d+3}b^2$ we get
\begin{align*}
\tilde f_{k+2}\>b^2 & \,{}=\, c_{k+2}\>b^2 + W_{k+2}\ ,
\\
\tilde g_{k+d+3}\>b^2 & \,{}=\, c_{k+d+3}\>b^2 + W_{k+d+3}\ ,
\end{align*}
where $c_{k+2},c_{k+d+3}\in \C$ and
$W_{k+2}, W_{k+d+3}$
are suitable polynomials in
the variables $f_j,g_j,$ $\tilde f_{k+j}b^j,$ $ \tilde g_{k+d+1+j}b^j$
such that every monomial of $W_{k+2}$ and every monomial of $W_{k+d+3}$
has degree at least three with respect to $b$.

Repeating this procedure we obtain for every $i=1,\dots,d$, equations
\begin{align*}
\tilde f_{k+i}b^i & \,{}=\, c_{k+i}b^i + W_{k+i}\ ,
\\
\tilde g_{k+d+1+i}b^i & \,{}=\, c_{k+d+1+i}b^i + W_{k+d+1+i}\ ,
\end{align*}
where $c_{k+i},c_{k+d+1+i}\in \C$ and
$W_{k+i}, W_{k+d+1+i}$ are
suitable polynomials in
the variables $f_j,g_j,$ $\tilde f_{k+j}b^j,$ $ \tilde g_{k+d+1+j}b^j$
such that every monomial of $W_{k+i}$ and every monomial of $W_{k+d+1+i}$
has degree at least $i+1$ with respect to $b$.

\medskip
For every $m$, replace in $W_m$ every variable
$\tilde f_{k+j}b^j$ and $\tilde g_{k+d+1+j}b^j$ with
$c_{k+j}b^j + W_{k+j}$ and $c_{k+d+1+j}b^j + W_{k+d+1+j}$, respectively.
Then for every $i=1,\dots,d$, we have
\begin{align}
\label{eq 6}
\tilde f_{k+i}b^i & \,{}=\, X^1_{k+i} + X^2_{k+i}\ ,
\\
\tilde g_{k+d+1+i}b^i & \,{}=\, X^1_{k+d+1+i} + X^2_{k+d+1+i}\ ,
\notag
\end{align}
where $X^1_{k+i}$, $X^1_{k+d+1+i}$ are suitable polynomials
in the $k+2d$ variables $f_j, g_j$, and $X^2_{k+i}$,
$ X^2_{k+d+1+i}$
are suitable polynomials in the variables
$f_j,g_j,$ $\tilde f_{k+j}b^j,$ $ \tilde g_{k+d+1+j}b^j$
such that every monomial of $X^2_{k+i}$ and every monomial of
$X^2_{k+d+1+i}$
has degree at least $i+2$ with respect to $b$.

Iterating this procedure we prove the theorem.
\qed

\subsection{Algebras $\Ol$ and $\Olo$}

For given $\bs\la=(k+d,k)$, we define an algebra $\Ol$ by the formula
\be
\Ol\,=\,\mc C_{k,d}/I\;,
\ee
where $\mc C_{k,d}$ is defined in~\Ref{Ckd}.
For any $x\in\mc C_{k,d}$, its image in $\Ol$ will be denoted $\{x\}$. Let
\be
\mc O^0_{\bs\la}\ =\ \C[\{f,g\}]\ .
\ee
By Corolalry~\ref{cor on algebra II}, the algebra homomorphism
\be
\label{iso formula}
q_{\bs\la}\ :\ \A_d \otimes \Olo\ \to\ \Ol\ ,
\qquad
f_i \mapsto \{f_i\}, \
g_i \mapsto \{g_i\}, \
b \mapsto \{b\}\ ,
\ee
for all $i$, is an isomorphism.

\medskip
Introduce the polynomials $\{f\}(u),\{g\}(u)\in \Ol[u]$ by the formulae:
\begin{align}
\label{f g}
\{f\}(u) & \,{}=\, \sum_{i=0}^{k-1}\{f_i\}u^i + u^k +
\sum_{i=1}^{d} \{\tilde f_{k+i}b^i\}u^{k+i}\ ,
\\
{} \{g\}(u) & \,{}=\,\sum_{i=0}^{k-1}\{g_i\}u^i +
\sum_{i=k+1}^{k+d}\{g_i\}u^i + u^{k+d+1} +
\sum_{i=1}^{d} \{\tilde g_{k+d+1+i}b^i\}u^{k+d+1+i}\ .
\notag
\end{align}
The polynomials $\{f\}(u), \{g\}(u)$ lie in the kernel of the
differential operator
\beq
\label{diff oper}
\D_\Ol \ =\ \partial^2\ -\ \frac{\Wr'(\{f\},\{g\})}{\Wr(\{f\},\{g\})}\,\partial
\ +\ \frac{\Wr(\{f\}',\{g\}')}{\Wr(\{f\},\{g\})}\ .
\eeq
The operator $\D_\Ol$
will be called the
{\it universal differential operator associated with}
$\Ol$.

\begin{cor}
\label{cor on diff eqn}
In formula \Ref{diff oper},
$\Wr(\{f\},\{g\})$ is a polynomial in $u$ of degree $2k+d$,\
$\Wr(\{f\}',\{g\}')$ is a polynomial in $u$
of degree $2k+d-1$ and the residue at $u=\infty$
of the ratio
\linebreak
$\Wr(\{f\}',\{g\}')/\Wr(\{f\},\{g\})$ equals $\{b\}$.
\end{cor}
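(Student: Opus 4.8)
The plan is to read off all three assertions from the explicit expansions of $\Wr(\{f\},\{g\})$ and $\Wr(\{f\}',\{g\}')$ as polynomials in $u$, using three inputs: the vanishing of high-degree coefficients already available in $\mc C_{k,d}$, the defining relations \Ref{main eqns II} of the ideal $I$, and the reduction of $\Ol$ modulo the nilpotent $\{b\}$, which by Corollary~\ref{cor on algebra II} recovers the honest polynomial ring $\Olo$.

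First I would collect coefficients. Writing $\Wr(f(u),g(u))=\sum_j U_ju^j$ and $\Wr(f'(u),g'(u))=\sum_j V_ju^j$ as in \Ref{f and g}, the subsequent observation that $U_i=0$ and $V_{i-2}=0$ for $i>2k+2d$ already in $\mc C_{k,d}$, together with the relations $U_{2k+d+i}=0$ ($i=1,\dots,d$), $V_{2k+d-2+i}=0$ ($i=2,\dots,d$) and $V_{2k+d-1}=U_{2k+d}\,b$ from \Ref{main eqns II}, gives in $\Ol$
\[
\Wr(\{f\},\{g\})\ =\ \sum_{j=0}^{2k+d}\{U_j\}\,u^j\ ,\qquad
\Wr(\{f\}',\{g\}')\ =\ \sum_{j=0}^{2k+d-1}\{V_j\}\,u^j\ ,
\]
where moreover the leading coefficient of the second polynomial is $\{V_{2k+d-1}\}=\{U_{2k+d}\}\{b\}$. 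So both are polynomials, of degree at most $2k+d$ and $2k+d-1$ respectively, and everything comes down to the single element $\{U_{2k+d}\}$.

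Next I would analyze $\{U_{2k+d}\}$ by reducing modulo $\{b\}$. The reduction $b\mapsto 0$ on $\C[\{f,g,\tilde f,\tilde g\}]\otimes\A_d$ restricts on $\mc C_{k,d}$ to a homomorphism $\theta:\mc C_{k,d}\to\Olo$ with $\theta(f_i)=f_i$, $\theta(g_i)=g_i$ and $\theta(\tilde f_{k+i}b^i)=\theta(\tilde g_{k+d+1+i}b^i)=\theta(b)=0$; it kills each generator of $I$ listed above (after the reduction each is a coefficient of $u^j$ in $\Wr(\bar f,\bar g)$ or $\Wr(\bar f',\bar g')$ above the respective degree, or the manifestly zero $V_{2k+d-1}-U_{2k+d}\cdot 0$), hence descends to $\bar\theta:\Ol\to\Olo$. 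Under the isomorphism $\Ol\simeq\A_d\otimes\Olo$ of Corollary~\ref{cor on algebra II}, $\bar\theta$ is the reduction $\A_d\otimes\Olo\to\Olo$, so $\ker\bar\theta$ is nilpotent. Now $\bar\theta$ carries $\{f\}(u),\{g\}(u)$ of \Ref{f g} to the genuine polynomials $\bar f(u)=\sum_{i=0}^{k-1}f_iu^i+u^k$ and $\bar g(u)=\sum_{i=0}^{k-1}g_iu^i+\sum_{i=k+1}^{k+d}g_iu^i+u^{k+d+1}$, of degrees $k$ and $k+d+1$; being compatible with Wronskians, it sends $\{U_{2k+d}\}$ to the leading coefficient of $\Wr(\bar f,\bar g)$, which a direct computation of top terms shows to equal $d+1\neq 0$. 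Since $\ker\bar\theta$ is nilpotent and $d+1$ is invertible in $\Olo$, $\{U_{2k+d}\}$ is a unit of $\Ol$. Hence $\Wr(\{f\},\{g\})$ has degree exactly $2k+d$ (the first assertion), and the coefficient of $u^{2k+d-1}$ in $\Wr(\{f\}',\{g\}')$ equals the unit $\{U_{2k+d}\}$ times $\{b\}$, which is nonzero for $d\ge1$, giving the stated degree $2k+d-1$ (for the degenerate value $d=0$ one has $\{b\}=0$ and the degree drops, consistently with the residue below).

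Finally the residue is immediate: since $\{U_{2k+d}\}$ is a unit, the expansions at $u=\infty$ give
\[
\frac{\Wr(\{f\}',\{g\}')}{\Wr(\{f\},\{g\})}\ =\
\frac{\{U_{2k+d}\}\{b\}\,u^{2k+d-1}+O(u^{2k+d-2})}{\{U_{2k+d}\}\,u^{2k+d}+O(u^{2k+d-1})}\ =\
\frac{\{b\}}{u}\ +\ O(u^{-2})\ ,
\]
so the residue at $u=\infty$ of this ratio is $\{b\}$. The only step that is not mere bookkeeping of leading terms is the upgrade from ``degree at most $2k+d$'' to ``degree exactly $2k+d$'', i.e.\ the invertibility of $\{U_{2k+d}\}$; this is exactly where Corollary~\ref{cor on algebra II} (the identification $\Ol\simeq\A_d\otimes\Olo$ with $\{b\}$ nilpotent) enters, and I expect it to be the only real — and still quite mild — obstacle.
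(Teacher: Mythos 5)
Your proposal is correct and follows exactly the route the paper intends: the relations \Ref{main eqns II} together with the automatic vanishing $U_i=V_{i-2}=0$ for $i>2k+2d$ kill all coefficients above degrees $2k+d$ and $2k+d-1$, the relation $V_{2k+d-1}=U_{2k+d}\,b$ gives the residue, and the invertibility of the leading coefficient (which the paper itself records later as $W_0=d+1+w_0$ with $w_0\in\langle\{b\}\rangle$) is what you establish via reduction modulo the nilpotent $\{b\}$. The paper leaves the corollary without an explicit proof, and your write-up supplies precisely that intended bookkeeping, with the additional (accurate) caveat about the degenerate case $d=0$.
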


\medskip
Introduce a notation for the coefficients of the universal differential operator
$\D_\Ol$:
\beq
\label{F-i}
F_1(u)\ = \frac{\Wr'(\{f\},\{g\})}{\Wr(\{f\},\{g\})}\ ,
\qquad
F_2(u)\ = \ \frac{\Wr(\{f\}',\{g\}')}{\Wr(\{f\},\{g\})}\ .
\eeq
Expand the coefficients in Laurent series at $u=\infty$:
\beq
\label{F-ii}
F_1(u)\ = \
\sum_{j=1}^\infty \,{F_{1j} u^{-j}}\ ,
\qquad
F_2(u)\ = \
\sum_{j=1}^\infty \, {F_{2j} u^{-j}}\ ,
\eeq
where $F_{sj}\in \Ol$,\ $F_{11}= 2k+d,\, F_{21}= \{b\}$.

\begin{lem}
\label{lem on coeff}
The $\C$-algebra $\Ol$ is generated by the elements $F_{sj}$,\ $s=1,2,\,
j=1,2,\dots\,.$
\end{lem}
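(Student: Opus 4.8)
The plan is to reduce the statement to its ``$b=0$'' specialization together with a Nakayama-type argument exploiting the nilpotence of $\{b\}$. Write $\mc G\subset\Ol$ for the $\C$-subalgebra generated by all the elements $F_{sj}$; we must show $\mc G=\Ol$. Two observations are immediate: $\{b\}=F_{21}\in\mc G$, and, by Corollary \ref{cor on algebra II} and the isomorphism $q_{\bs\la}$, one has $\Ol\simeq\A_d\otimes\Olo$ with $\Olo=\C[\{f,g\}]$, so that $\{b\}^{d+1}=0$ and $\Ol/\{b\}\Ol\simeq\Olo$. The heart of the argument is the \emph{intermediate claim}: the images of the $F_{sj}$ in $\Ol/\{b\}\Ol$ generate $\Olo$, equivalently $\mc G+\{b\}\Ol=\Ol$.

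Granting the intermediate claim, $\mc G=\Ol$ follows formally. Multiplying $\mc G+\{b\}\Ol=\Ol$ by $\{b\}$ and using $\{b\}\mc G\subseteq\mc G$ gives $\{b\}\Ol\subseteq\mc G+\{b\}^2\Ol$, hence $\Ol=\mc G+\{b\}^2\Ol$; iterating, $\Ol=\mc G+\{b\}^m\Ol$ for every $m\ge1$, and taking $m=d+1$ together with $\{b\}^{d+1}=0$ yields $\Ol=\mc G$.

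To prove the intermediate claim I would reduce the universal differential operator $\D_\Ol$ of \Ref{diff oper} modulo $\{b\}$. Since the reductions $f_0(u),g_0(u)$ of $\{f\}(u),\{g\}(u)$ are honest polynomials of distinct degrees, $\Wr(f_0,g_0)\ne0$ in $\Olo$, so the reduction is the well-defined operator $\D_0=\partial^2-F_1^0(u)\partial+F_2^0(u)$ whose kernel is spanned by $f_0(u)=u^k+\sum_{i=0}^{k-1}f_iu^i$ and $g_0(u)=u^{k+d+1}+\sum_{i=0}^{k-1}g_iu^i+\sum_{i=k+1}^{k+d}g_iu^i$; the coefficients of $f_0$ and $g_0$ are precisely the $2k+d$ polynomial generators of $\Olo$, and the images $F_{sj}^0\in\Olo$ of the $F_{sj}$ are the Laurent coefficients of $F_1^0(u),F_2^0(u)$ at $u=\infty$. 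By Corollary \ref{cor on diff eqn} and a computation of leading coefficients of the relevant Wronskians, the indicial polynomial of $\D_0$ at $u=\infty$ is $P(\rho)=\rho(\rho-1)-(2k+d)\rho+k(k+d+1)=(\rho-k)(\rho-k-d-1)$. I would then run the Frobenius recursion at $u=\infty$. There is a unique formal series solution $\hat f_0(u)=u^k+\sum_{j\ge1}\beta_ju^{k-j}$ of $\D_0y=0$, with $\beta_j$ determined from $\beta_0,\dots,\beta_{j-1}$ and the $F_{sj}^0$ after division by $P(k-j)=j(j+d+1)$, an invertible integer for $j\ge1$; since the polynomial $f_0$ satisfies the same recursion with the same normalization, $\hat f_0=f_0$, so all $f_i$ lie in the subalgebra generated by the $F_{sj}^0$. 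The analogous recursion for $\hat g_0(u)=u^{k+d+1}+\sum_{j\ge1}\gamma_ju^{k+d+1-j}$ requires division by $P(k+d+1-j)=j(j-d-1)$, invertible for all $j\ge1$ except at $j=d+1$; there the equation leaves $\gamma_{d+1}$ undetermined but is automatically satisfied since the polynomial solution $g_0$ exists, and one normalizes $\gamma_{d+1}=0$, matching the vanishing of the $u^k$-coefficient of $g(u)$ in \Ref{new f,g}. Hence $\hat g_0=g_0$, all $g_i$ lie in the subalgebra generated by the $F_{sj}^0$, and the intermediate claim follows. (Alternatively the intermediate claim is the $b=0$ specialization of the Lemma and may be quoted from \cite{MTV3}.)

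The step I expect to require the most care is the Frobenius recursion for $g_0$: the exponents $k$ and $k+d+1$ of $\D_0$ at $u=\infty$ differ by the positive integer $d+1$, so a priori a second solution could carry a logarithm or the recursion could fail to determine $g_0$; the normalization built into \Ref{new f,g} (zero coefficient at $u^k$) is exactly what resolves this resonance. The reduction modulo $\{b\}$ and the nilpotent Nakayama iteration are routine.
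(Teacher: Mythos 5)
Your proof is correct and follows essentially the same route as the paper: the paper also reduces to the $b=0$ statement (quoting Lemma 3.3 of \cite{MTV3}, which your Frobenius recursion at $u=\infty$ reproves, indicial polynomial $(\rho-k)(\rho-k-d-1)$ and the resonance at $j=d+1$ included) and then bootstraps using nilpotence of $\{b\}$. Your Nakayama-style iteration $\Ol=\mc G+\{b\}^m\Ol$ is just a cleaner packaging of the paper's successive substitution of $\{f_m\},\{g_m\}$ in terms of the $F_{sj}$ modulo increasing powers of $\langle\{b\}\rangle$.
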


\begin{proof}
By Theorem \ref{thm on algebra II}, we have an isomorphism
$q_{\bs\la} : \A_d \otimes \Olo\to \Ol$.
Hence, for all $(s,j)$,
we can write
$
F_{sj} = \sum_{t=0}^d F_{sj}^t \{b\}^t,
$
where $F_{sj}^t$ are polynomials in the generators $\{f_i\}$, $\{g_i\}$.
The operator
\bea
\partial^2\ -\ \sum_{j=1}^\infty \, {F_{1j}^0 u^{-j}}\partial
\ +\
\sum_{j=2}^\infty \, {F_{2j}^0 u^{-j}}\
\eea
annihilates the polynomials
$ \{f_0\}+\dots + \{f_{k-1}\}u^{k-1} + u^k$ and
$\{g_0\} + \{g_1\}u+\dots + \{g_{k-1}\}u^{k-1} +
\{g_{k+1}\}u^{k+1} + \dots + \{g_{k+d}\}u^{k+d} + u^{k+d+1}$.
By Lemma 3.3 in \cite{MTV3}, every $\{f_m\}, \{g_m\}$ can be written as a polynomial in
$F_{sj}^0$,\ $s=1,2,\,j=2,3,\dots $, with coefficients in $\C$:
\bea
\{f_m\}\ = \ \phi^0_m(F^0_{sj})\ ,
\qquad
{}\{g_m\}\ = \ \psi^0_m(F^0_{sj})\ .
\eea
We have
\bea
\{f_m\}\ = \ \phi^0_m(F_{sj}) + (\phi^0_m(F^0_{sj}) - \phi^0_m(F_{sj}))
= \phi^0_m(F_{sj}) + \{b\} \phi_m^1\ ,
\\
{}\{g_m\}\ = \ \psi^0_m(F_{sj}) + (\psi^0_m(F^0_{sj}) - \psi^0_m(F_{sj}))
= \psi^0_m(F_{sj}) + \{b\} \psi_m^1\ ,
\eea
where $\phi_m^1, \psi_m^1 \in \Ol$.
These formulae give a presentation of the
elements $\{f_m\}, \{g_m\}$ in terms of $F_{sj}$
modulo the ideal $\langle\{b\}\rangle\subset\Ol$.

Elements $\phi_m^1, \psi_m^1$ can be written as polynomials in the generators
$\{f_i\}, \{g_i\}$ with coefficients in $\C[\{b\}]$:
\bea
\phi^1_m\ =\ \phi_m^1(\{f_i\}, \{g_i\})\ ,
\qquad
\psi_m^1\ = \ \psi_m^1(\{f_i\},\{g_i\})\ .
\eea
Then
\bea
\phi^1_m\ &=&\ \phi_m^1(\{f_i\}, \{g_i\}) =
\phi_m^1(\phi_i^0(F^0_{sj}),\psi_i^0(F^0_{sj}))
\\
&=&
\phi_m^1(\phi_i^0(F_{sj}),\psi_i^0(F_{sj})) +
(\phi_m^1(\phi_i^0(F^0_{sj}),\psi_i^0(F^0_{sj})) -
\phi_m^1(\phi_i^0(F_{sj}),\psi_i^0(F_{sj})) )
\\
&=&
\phi_m^1(\phi_i^0(F_{sj}),\psi_i^0(F_{sj})) +
\{b\}\phi_m^2
\eea
and
\bea
\psi^1_m\ &=&\ \psi_m^1(\{f_i\}, \{g_i\}) =
\psi_m^1(\phi_i^0(F^0_{sj}),\psi_i^0(F^0_{sj}))
\\
&=&
\psi_m^1(\phi_i^0(F_{sj}),\psi_i^0(F_{sj})) +
(\psi_m^1(\phi_i^0(F0_{sj}),\psi_i^0(F^0_{sj})) -
\psi_m^1(\phi_i^0(F_{sj}),\psi_i^0(F_{sj})) )
\\
&=&
\psi_m^1(\phi_i^0(F_{sj}),\psi_i^0(F_{sj})) +
\{b\}\psi_m^2\ ,
\eea
where $\phi_m^2, \psi_m^2\in \Ol$. Thus,
\bea
\{f_m\}
& = & \phi^0_m(F_{sj}) + \{b\} \phi_m^1(\phi_i^0(F_{sj}),\psi_i^0(F_{sj})) +
\{b\}^2\phi_m^2\ ,
\\
{}\{g_m\}
&=& \psi^0_m(F_{sj}) + \{b\}\psi_m^1(\phi_i^0(F_{sj}),\psi_i^0(F_{sj})) +
\{b\}^2\psi_m^2\ .
\eea
These formulae give a presentation of elements $\{f_m\}, \{g_m\}$ in terms of $F_{sj}$
modulo the ideal $\langle\{b\}^2\rangle\subset\Ol$.
Continuing this procedure we prove the lemma.
\end{proof}

Define an algebra epimorphism
\beq
\label{pOl}
p^\O_{\bs\la}\ :\ \Ol\ \to \Olo
\eeq
by the formulae $\{b\} \mapsto 0,\,\{f_i\} \mapsto f_i,\,
\{g_i\} \mapsto g_i$ for all $i$.
Define an algebra monomorphism
\beq
\label{iOl}
i^\O_{\bs\la}\ :\ \A_d\ \to \Ol
\eeq
by the formula $b\mapsto \{b\}$.

\subsection{Grading on $\Ol$ and $\Olo$}

Define
the degrees of the elements $u, b, f_i, g_i, \tilde f_{k+i}b^i,
\tilde g_{k+d+1+i}b^i$ to be
$1,-1,k-i, k+d+1-i, -i, -i$, respectively.
Then the polynomials $f(u), g(u)$, defined in
\Ref{new f,g}, are homogeneous of degree $k$, $k+d+1$, respectively.

Equations of system \Ref{main eqns II} are homogeneous. Hence $\Ol$
has an induced grading. The same rule defines a grading on $\Olo$.
The isomorphism
$q_{\bs\la} : \A_d \otimes \Olo\ \to \Ol$ and
epimorphism $p^\O_{\bs\la} : \Ol \to \Olo$
are graded.

\begin{lem}
\label{char OL}
The graded character of $\Ol$ and $\Olo$ are given by the following formulae:
\vvn.5>
\begin{align}
\label{ch Ol}
\ch_\Ol(q) &\;{}=\; \frac{(1-q^{d+1})^2}{1-q}\,
\frac{q^{-d}}{(q)_{k+d+1}(q)_{k}}\;=\;
\frac{(1-q^{n-2k+1})^2} {1-q}\, \frac{q^{2k-n}}{(q)_{n-k+1}(q)_{k}}\;,
\\[8pt]
\ch_{\Olo}(q) &\; {}=\;\frac{1-q^{d+1} }{(q)_{k+d+1}(q)_{k}}\;=\;
\frac{1-q^{n-2k+1}}{(q)_{n-k+1}(q)_{k}}\;.
\notag
\\[-32pt]
\notag
\end{align}
\vvn.2>
\qed
\end{lem}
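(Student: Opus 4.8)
\textbf{Proof plan for Lemma \ref{char OL}.}
The plan is to compute the graded character of $\Ol$ directly from the isomorphism $q_{\bs\la}:\A_d\otimes\Olo\to\Ol$ of graded algebras, which reduces the problem to computing $\ch_{\A_d}(q)$ and $\ch_{\Olo}(q)$ and multiplying. Since $\A_d=\C[b]/\langle b^{d+1}\rangle$ with $\deg b=-1$, we have $\ch_{\A_d}(q)=1+q^{-1}+\dots+q^{-d}=\dfrac{1-q^{-(d+1)}}{1-q^{-1}}=\dfrac{q^{-d}(1-q^{d+1})}{1-q}$; equivalently $(1-q^{d+1})/(1-q)\cdot q^{-d}$. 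So once $\ch_{\Olo}(q)$ is known, the formula $\ch_\Ol(q)=\ch_{\A_d}(q)\,\ch_{\Olo}(q)$ gives the first displayed equality immediately, and the second equality in each line is just the substitution $d=n-2k$, $k+d=n-k$, $k+d+1=n-k+1$, which is purely notational.

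Thus the real content is the character of $\Olo=\C[\{f,g\}]$, the free polynomial algebra on the $2k+d$ variables listed in \Ref{f,g}: $f_i$ for $i=0,\dots,k-1$ with $\deg f_i=k-i$, and $g_i$ for $i=0,\dots,k-1,k+1,\dots,k+d$ with $\deg g_i=k+d+1-i$. For a free polynomial algebra the graded character is the product of $(1-q^{\deg})^{-1}$ over the generators. The $f$-variables contribute $\prod_{i=0}^{k-1}(1-q^{k-i})^{-1}=\prod_{j=1}^{k}(1-q^{j})^{-1}=1/(q)_k$. The $g$-variables have degrees $\{k+d+1-i: i=0,\dots,k-1\}\cup\{k+d+1-i: i=k+1,\dots,k+d\}=\{d+2,\dots,d+k+1\}\cup\{1,\dots,d\}$, i.e.\ all of $\{1,\dots,k+d+1\}$ except the single value $d+1$. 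Hence the $g$-contribution is $\dfrac{1-q^{d+1}}{(q)_{k+d+1}}$. Multiplying gives $\ch_{\Olo}(q)=\dfrac{1-q^{d+1}}{(q)_{k+d+1}(q)_k}$, which is the second line of the lemma; multiplying by $\ch_{\A_d}(q)=(1-q^{d+1})q^{-d}/(1-q)$ gives the first line.

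The only step requiring care is the bookkeeping of the exponent set of the $g$-variables and the verification that the degree assignment from Section~6.4 is consistent (so that $q_{\bs\la}$ and $p^\O_{\bs\la}$ are indeed graded, which the excerpt already asserts). No step is a genuine obstacle: the argument is a routine generating-function computation once one invokes the graded isomorphism $\A_d\otimes\Olo\cong\Ol$ from Corollary~\ref{cor on algebra II} together with the gradedness statement recorded just before this lemma. One should only double-check that the index range for the $g_i$ omits exactly $i=k$ (equivalently the degree $d+1$), since that single omission is what produces the factor $1-q^{d+1}$ in the numerator rather than a clean $1/(q)_{k+d+1}$.
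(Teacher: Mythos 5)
Your computation is correct and is exactly the routine argument the paper intends: the lemma is stated with no written proof precisely because it follows from the graded isomorphism $q_{\bs\la}:\A_d\otimes\Olo\to\Ol$ and the generator degrees ($\deg f_i=k-i$, $\deg g_i=k+d+1-i$, $\deg b=-1$), with the missing degree $d+1$ among the $g_i$ producing the factor $1-q^{d+1}$. Your bookkeeping of the exponent sets and the substitution $d=n-2k$ are both accurate, so nothing further is needed.
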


Let $F_{ij}\in \Ol$ be the elements defined in \Ref{F-ii}.

\begin{lem}
\label{lem Ol degree}
For any $(i,j)$, the element $F_{ij}$ is homogeneous of degree $j-i$.
\qed
\end{lem}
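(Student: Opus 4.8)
The plan is to track how the grading defined in the previous subsection propagates through the definitions of $F_1(u)$, $F_2(u)$ and their Laurent expansions. First I would record that, by the assignment $\deg u = 1$, $\deg b = -1$, $\deg f_i = k-i$, $\deg g_i = k+d+1-i$, $\deg(\tilde f_{k+i}b^i) = -i$, $\deg(\tilde g_{k+d+1+i}b^i) = -i$, the polynomials $\{f\}(u)$ and $\{g\}(u)$ of \Ref{f g} are homogeneous of degrees $k$ and $k+d+1$ respectively (each monomial $\{f_i\}u^i$ has degree $(k-i)+i = k$, and similarly for $\{g\}(u)$, using that $\Ol$ inherits the grading and $q_{\bs\la}$ is graded). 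This is exactly the statement recorded just before Lemma \ref{char OL}.

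Next I would compute the degrees of the Wronskians. Since $\Wr(\{f\},\{g\}) = \{f\}\{g\}' - \{f\}'\{g\}$ and $\der = d/du$ lowers the $u$-degree by $1$, the operator $\der$ has degree $-1$ in this grading; hence $\{g\}'$ is homogeneous of degree $k+d$, $\{f\}'$ of degree $k-1$, and $\Wr(\{f\},\{g\})$ is homogeneous of degree $k + (k+d) = 2k+d$. Likewise $\Wr(\{f\}',\{g\}')$ is homogeneous of degree $(k-1)+(k+d-1) = 2k+d-2$. Therefore
\[
F_1(u) = \frac{\Wr'(\{f\},\{g\})}{\Wr(\{f\},\{g\})}
\]
is a ratio of homogeneous elements of degrees $(2k+d-1)$ and $(2k+d)$, so $F_1(u)$ is homogeneous of degree $-1$; similarly $F_2(u) = \Wr(\{f\}',\{g\}')/\Wr(\{f\},\{g\})$ is homogeneous of degree $(2k+d-2)-(2k+d) = -2$. (Here I use that, working in the graded ring $\Ol[u,u^{-1}]]$ of Laurent series, a ratio of homogeneous elements is homogeneous with degree the difference, which is legitimate because the leading coefficients $F_{11}=2k+d$ and $F_{21}=\{b\}$ are nonzero in the appropriate degree — note $\{b\}$ has degree $-1$, consistent with $F_{21}u^{-1}$ having degree $-2$.)

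Finally, expanding $F_i(u) = \sum_{j\ge 1} F_{ij}u^{-j}$ as in \Ref{F-ii}: since $F_i(u)$ is homogeneous of degree $-i$ and $u^{-j}$ has degree $-j$, each coefficient $F_{ij}$ must be homogeneous of degree $-i-(-j) = j-i$. This gives the claim for $i=1,2$ and all $j$. The only point requiring a little care — the main (minor) obstacle — is justifying that homogeneity is preserved under the formal inversion $1/\Wr(\{f\},\{g\})$ and the Laurent expansion at $u=\infty$; this follows by writing $\Wr(\{f\},\{g\}) = c\,u^{2k+d}(1 + (\text{lower order in }u))$ with $c$ a nonzero scalar and expanding the geometric series, each term of which is homogeneous of the correct degree because multiplication and addition of homogeneous elements of equal degree stay homogeneous.
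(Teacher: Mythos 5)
Your degree count is correct and is exactly the argument the paper leaves implicit (the lemma is stated there with no proof): $\{f\}(u)$ and $\{g\}(u)$ are homogeneous of degrees $k$ and $k+d+1$, hence $F_1(u)$ and $F_2(u)$ are homogeneous of degrees $-1$ and $-2$, and reading off the coefficient of $u^{-j}$ gives $\deg F_{ij}=j-i$. One cosmetic correction: the leading coefficient of $\Wr(\{f\},\{g\})$ at $u^{2k+d}$ is not a nonzero scalar but $W_0=d+1+w_0$ with $w_0\in\langle\{b\}\rangle$ nilpotent, see \Ref{Wronskian}; it is invertible and homogeneous of degree $0$, which is all your geometric-series expansion actually needs.
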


\section{Special homomorphism of $\Ol$ and Bethe eigenleaves}
\label{Ol and B}

We keep notations of Section \ref{sec Ol}.

\subsection{Differential operators with polynomial kernel}

Let $W(u)\in \C[u]$ be a monic polynomial of degree $2k+d$. Let
$U(u)\in\A_d[u]$ be a polynomial
of the form
\beq
\label{V}
U(u)\ = \
b u^{2k+d-1} \,+\, \sum_{i=0}^{2k+d-2}\sum_{j=0}^dv_{i,j}b^ju^{i}
\eeq
with $v_{ij}\in \C$. Denote
\vvn-.2>
\begin{gather}
\label{diff ex}
\D \ = \ \partial^2\ -\ \frac{W'}{W}\,\partial
\ +\
\frac{U}{W}\ ,
\\[8pt]
\label{chi}
\chi(\al)\ =\ \al(\al-1) - (2k+d)\al + v_{2k+d-2,0}\ ,
\end{gather}
where $\al$ is a variable.

\smallskip
Consider the differential equation
\,$\D y(u)=0$ \,on an $\A_d$-valued function $y(u)$.

\begin{lem}
\label{lem on solutions}
Assume that all solutions to the differential equation $\D y(u)=0$ are polynomials
and $\chi(\al) = (\al-k)(\al-k-d-1)$.
Then the differential equation $\D y(u)= 0$ has unique solutions
$F(u),G(u)$ such that
\vvn.2>
\begin{align*}
F(u) &\;{}=\;
\sum_{i=0}^{k-1}\sum_{j=0}^{d} F_{ij}b^ju^i + u^k +
\sum_{i=1}^{d}\sum_{j=i}^{d}F_{k+i,j}b^ju^{k+i}\ ,
\\[2pt]
G(u) & \;{}=\; \sum_{i=0}^{k-1}\sum_{j=0}^{d} G_{ij}u^i +
\sum_{i=k+1}^{k+d}\sum_{j=0}^{d}G_{ij}b^ju^i + u^{k+d+1} +
\sum_{i=1}^{d}\sum_{j=i}^{d} G_{k+d+1+i,j}b^ju^{k+d+1+i}\ ,
\end{align*}
where $F_{ij},G_{ij}\in \C$.

\end{lem}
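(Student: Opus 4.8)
The plan is to reduce the problem to the structure theory already developed for the scalar case (the algebra $\Olo$ and Lemma 3.3 of \cite{MTV3}) by an order-filtration argument in the powers of $b$. First I would observe that the differential operator $\D$ in \Ref{diff ex} reduces modulo the ideal $\langle b\rangle\subset\A_d$ to a scalar operator $\D_0=\der^2-(W'/W)\der+(U_0/W)$, where $U_0$ is obtained from $U$ by setting $b=0$; note that $\D_0$ has no $bu^{2k+d-1}$ term since that term is divisible by $b$. The hypothesis that all solutions of $\D y=0$ are polynomials, together with $\chi(\al)=(\al-k)(\al-k-d-1)$, says that the indicial equation at $u=\infty$ has roots $k$ and $k+d+1$; hence the scalar operator $\D_0$ has a two-dimensional kernel spanned by a monic polynomial of degree $k$ and a monic polynomial of degree $k+d+1$. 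This gives the $b=0$ part of the sought solutions $F(u),G(u)$, namely the coefficients $F_{i0}$ and $G_{i0}$, as the kernel basis of $\D_0$ normalized as in the statement.

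Next I would set up the induction on the order of vanishing in $b$. Suppose $F^{(r)}(u),G^{(r)}(u)\in\A_d[u]$ have been constructed so that $\D F^{(r)}\equiv 0$ and $\D G^{(r)}\equiv 0$ modulo $b^{r+1}$, with the required shape (the polynomial degrees in $u$ bounded as in the displayed formulas, coefficients the asserted powers of $b$). Writing $F=F^{(r)}+b^{r+1}\Phi$ and imposing $\D F\equiv 0\pmod{b^{r+2}}$ gives an inhomogeneous scalar equation $\D_0\Phi\equiv -\,b^{-r-1}\D F^{(r)}\pmod b$ for the correction $\Phi$; since $b^{r+1}\Phi$ only involves $b$-powers $\ge r+1$ and the shape constraint forces $\Phi$ to have $u$-degree at most $k+d+1$ in the $F$-branch and at most $k+d+1$ in the $G$-branch, one must solve $\D_0\Phi=R$ for a given polynomial right-hand side $R$ in a controlled space of polynomials. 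Here is where I would invoke the explicit description of the image of $\D_0$ on polynomials: because the indicial roots are the distinct integers $k$ and $k+d+1$, the map $\D_0$ from polynomials of degree $\le k+d+1$ to polynomials of the appropriate degree is surjective onto the relevant subspace with a two-dimensional kernel, and one can normalize $\Phi$ uniquely by demanding that it contain no component along the degree-$k$ and degree-$(k+d+1)$ kernel elements already accounted for in $F^{(r)},G^{(r)}$ (equivalently, by fixing the leading coefficients and the coefficient of $u^k$). This is essentially the content of Lemma 3.3 in \cite{MTV3} applied fibrewise, and it is what pins down the coefficients $F_{k+i,j}$ for $j\ge i$ and kills the lower-order terms that would violate the prescribed shape.

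The main obstacle I anticipate is the bookkeeping that guarantees the shape constraints are preserved at each step: one must check that solving $\D_0\Phi=R$ never introduces a monomial $b^j u^{k+i}$ with $j<i$ in the $F$-branch (and similarly $b^j u^{k+d+1+i}$ with $j<i$ in the $G$-branch), i.e. that the triangular structure between the $u$-degree and the $b$-degree is self-reproducing under the recursion. This follows by tracking degrees: the right-hand side $R=-b^{-r-1}\D F^{(r)}\bmod b$ inherits its $u$-degree from the highest terms of $F^{(r)}$, and because $W$ has degree $2k+d$ and $U$ has the form \Ref{V} with the $b$-free part of $U$ of $u$-degree $\le 2k+d-2$, multiplying through by $W$ and matching degrees forces the correction at $b$-order $r+1$ to live in $u$-degrees $\le k+r+1$ on the small branch and $\le k+d+1$ on the large branch—exactly matching the ranges $j\ge i$ in the statement once one reindexes. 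The uniqueness of $F,G$ then follows because the only freedom in solving each $\D_0\Phi=R$ is addition of a kernel element of degree $k$ or $k+d+1$, and the normalization (monic of the stated degree, prescribed vanishing of certain coefficients) removes it. I would also remark that $\chi(\al)$ being exactly $(\al-k)(\al-k-d-1)$ rather than merely having integer roots is used twice: once to know the kernel of $\D_0$ has the stated two degrees, and once to ensure that in the inhomogeneous step no resonance obstructs surjectivity of $\D_0$ on the relevant polynomial space.
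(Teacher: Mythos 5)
Your overall strategy --- expanding in powers of $b$, reducing to the scalar operator $\D_0$, and pinning the corrections down by the indicial analysis at $u=\infty$ together with a normalization killing the kernel elements of degrees $k$ and $k+d+1$ --- is the same as the paper's. The uniqueness half of your argument and the degree bookkeeping (the coefficient of $u^{k+i}$ only appearing with $b^j$, $j\ge i$, because $\chi(k+m)=m(m-d-1)\neq0$ for $1\le m\le d$) are fine. The gap is in the \emph{existence} half of the inductive step. You solve $\D_0\Phi=R$ by asserting that, since the indicial roots are $k$ and $k+d+1$, ``the map $\D_0$ from polynomials of degree $\le k+d+1$ \dots is surjective onto the relevant subspace.'' That is not a proof and, taken at face value, is false: the map $y\mapsto Wy''-W'y'+U_0y$ on polynomials of degree at most $k+d+1$ has image of dimension $k+d$, while the right-hand sides produced by the recursion (e.g.\ $-(u^{2k+d-1}+U_1)y_0$ at the first step) range over a space of polynomials of degree up to $3k+d-1$; membership of the specific right-hand side in the image is exactly the nontrivial point. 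Non-resonance of $\chi$ at $u=\infty$ controls nothing at the finite singular points (the roots of $W$), where an inhomogeneous solve can genuinely produce logarithms, so polynomial solvability of each step is not automatic.

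What actually guarantees solvability is the standing hypothesis, which you invoke only at order zero: all $\A_d$-valued solutions of $\D y=0$ are polynomials. This is how the paper argues: the $\A_d$-valued solution space of $\D$ in \Ref{diff ex} is free of rank two and reduces mod $b$ onto the kernel of $\D_0$, so one may \emph{start} from a genuine polynomial solution $y=y_0+by_1+\dots+b^dy_d$ of $\D y=0$ with $y_0$ monic of degree $k$ (resp.\ $k+d+1$); its components $y_j$ then automatically solve your inhomogeneous equations in polynomials, and the indicial analysis is used only to bound $\deg y_j$ by $k+j$ (resp.\ $k+d+1+j$) and to normalize uniquely by adding homogeneous solutions. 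Your appeal to Lemma 3.3 of \cite{MTV3} is also off target: that lemma recovers the coefficients of the two kernel polynomials from the coefficients of the operator and has nothing to do with surjectivity of an inhomogeneous solve. If you replace the surjectivity claim by the lifting argument just described, your write-up becomes essentially the paper's proof.
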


\begin{proof}
Write
\vvn-.8>
\be
U(u)\ =\ bu^{2k+d-1}\,+\,\sum_{j=0}^d b^jU_j(u)
\ee
with $U_j(u)\in \C[u]$ and
$\deg\,U_j \leq 2k+d-2$ for all $j$.

Let $y(u) = y_0(u) + by_1(u) + \dots + b^dy_d(u)$ be a solution with
$y_i(u) \in \C[u]$. Assume that $y_0(u) \neq 0$. Then $y_0(u)$ is
of degree $k$ or $k+d+1$ and $y_0(u)$ satisfies the equation
$\D_0y_0(u)=0$, where
\vvn-.4>
\be
\D_0 \ = \ \partial^2\ -\ \frac{W'}{W}\,\partial\ +\ \frac{U_0}{W}\ .
\ee

Assume that $y_0$ is of degree $k$ and monic. The polynomial $y_1(u)$
is a solution of the inhomogeneous differential equation
\beq
\D_0 y_1(u)\ +\ \frac {u^{2k+d-1} + U_1(u)}{W(u)}\,y_0(u)\ = \ 0\ .
\eeq
The term $\D_0 y_1(u)$ is of order $u^{k-1}$ as $u\to\infty$. The polynomial
$y_1(u)$ is defined up to addition of a solution of the homogeneous equation.
Therefore, $y_1(u)$ does exist and unique if it has the form
\vvn-.5>
\beq
y_1(u)\ = \ \frac {-1}{\chi(k+1)}\,u^{k+1}\ +\ \sum_{i=0}^{k-1}\,y_{i1}\,u^i\
\eeq
with $y_{i1}\in\C$.
Continuing this reasoning, we can show that a solution
$y(u) = y_0(u) + by_1(u) + \dots + b^dy_d(u)$ with $y_i(u)\in\C[u]$
does exist and unique if $y_0(u)$ is a monic polynomial of degree $k$ and
for $j=1,\dots,d$, the polynomial $y_j(u)$
has the form
\vvn.2>
\be
y_j(u)\ = \ \frac {(-1)^j}{\prod_{m=1}^j\chi(k+m)}\,u^{k+j}
\ +\ \sum_{i=0}^{k-1}\,y_{ij}\,u^i\ + \sum_{i=k+1}^{k+j-1}\,y_{ij}\,u^i\
\vv.3>
\ee
with $y_{ij}\in\C$. We take this $y(u)$ to be $F(u)$ in the lemma.
Similarly, we can construct the polynomial $G(u)$ in the lemma.
\end{proof}

\subsection{Special homomorphisms $\Ol\to\A_d$}
\label{Special homomorphisms}

Let $\{f\}(u),\{g\}(u)$ be the polynomials defined in \Ref{f g}.
Let $\D_\Ol$ be the universal differential operator defined in \Ref{diff oper}.

Let $W(u)\in \C[u]$ be a monic polynomial of degree $2k+d$. Let
$U(u)\in\A_d[u]$ be a polynomial
of the form described in
\Ref{V}. Let $\D$ and $\chi(\al)$ be defined by \Ref{diff ex} and
\Ref{chi}, respectively. Assume that all solutions to the differential
equation $\D y(u)=0$ are polynomials and $\chi(\al) =
(\al-k)(\al-k-d-1)$.
Consider the two polynomials $F(u),G(u)$, described in Lemma \ref{lem on solutions}.
Write them in the form:
\begin{align*}
F(u) & \;{}=\;\sum_{i=0}^{k-1}F_iu^i + u^k +
\sum_{i=1}^{d} \tilde F_{k+i}b^iu^{k+i}\ ,
\\[2pt]
G(u) &\;{}=\; \sum_{i=0}^{k-1} G_iu^i +
\sum_{i=k+1}^{k+d}G_iu^i + u^{k+d+1} + \sum_{i=1}^{d}
\tilde G_{k+d+1+i}b^iu^{k+d+1+i}\ ,
\end{align*}
where
\vvn-.3>
\be
F_i\,=\,\sum_{j=0}^{d} F_{ij}b^j,\ G_i=\sum_{j=0}^{d} G_{ij}b^j,
\ \tilde F_{k+i}=\sum_{j=i}^{d} F_{k+i,j}b^{j-i},\
\tilde G_{k+d+1+i}=\sum_{j=i}^{d} G_{k+d+1+i,j}b^{j-i}.
\ee

\begin{lem}
\label{basic lem}
A map
\begin{gather}
\{f_i\}\ \mapsto \ F_i\ , \quad \{g_i\}\ \mapsto\ G_i\ , \quad
\{\tilde f_{k+i}b^i\}\ \mapsto \ \tilde F_{k+i}b^i\ ,
\\[4pt]
\{\tilde g_{k+d+1+i}b^i\}\ \mapsto \ \tilde G_{k+d+1+i}b^i\ ,
\quad \{b\}\ \mapsto\ b \
\end{gather}
defines an algebra homomorphism $\eta : \Ol\to\A_d$. Under this homomorphism,
\vvn.2>
\be
\eta(\{f\}(u))\ =\ F(u)\ ,\qquad \eta(\{g\}(u))\ =\ G(u)\ ,
\qquad \eta(\D_\Ol)\ =\ \D\ .
\vv.2>
\ee
\end{lem}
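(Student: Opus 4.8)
The plan is to define the map $\eta$ on generators and check that it respects the defining relations of $\Ol = \mc C_{k,d}/I$, i.e.\ that the elements of $\A_d$ assigned to the left-hand sides of the equations \Ref{main eqns II} all vanish. The key point is that the polynomials $F(u),G(u)$ produced by Lemma \ref{lem on solutions} are, by construction, annihilated by $\D$; hence their Wronskian $\Wr(F,G)$ is proportional to $W$ and $\Wr(F',G')/\Wr(F,G)$ is the potential $U/W$ of $\D$. Concretely, first I would recall that for a second-order operator $\partial^2 - (W'/W)\partial + (U/W)$ with two-dimensional polynomial kernel spanned by $F$ and $G$, one has $\Wr(F,G) = c\,W$ for a nonzero constant $c$ (from the standard first-order ODE satisfied by the Wronskian, $\Wr' = (W'/W)\Wr$), and then $\Wr(F',G')$ equals $c\,U$ by directly differentiating the relation $\D F = \D G = 0$ and eliminating. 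Matching leading coefficients fixes $c$: since $F$ is monic of degree $k$ and $G$ monic of degree $k+d+1$, and $W$ is monic of degree $2k+d$, the top coefficient of $\Wr(F,G)$ is $(k+d+1)-k = d+1$, so $c = d+1$; this is the analogue of the normalization appearing in Corollary \ref{cor on diff eqn} and in the proof of Lemma \ref{lem on pairs of polynomials and leaves}.

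Next I would compare this with the universal situation. The polynomials $\{f\}(u),\{g\}(u)$ over $\Ol$ satisfy $\D_\Ol\{f\} = \D_\Ol\{g\} = 0$ by the very definition \Ref{diff oper} of $\D_\Ol$, and the defining equations \Ref{main eqns II} of $I$ are exactly the conditions that $\Wr(\{f\},\{g\})$ and $\Wr(\{f\}',\{g\}')$ have the prescribed shape (degree $2k+d$, resp.\ $2k+d-1$, with the leading-coefficient and $b$-residue normalizations of Corollary \ref{cor on diff eqn}). So the assignment $\{f_i\}\mapsto F_i$, $\{g_i\}\mapsto G_i$, $\{\tilde f_{k+i}b^i\}\mapsto\tilde F_{k+i}b^i$, $\{\tilde g_{k+d+1+i}b^i\}\mapsto\tilde G_{k+d+1+i}b^i$, $\{b\}\mapsto b$ is a well-defined algebra map from $\mc C_{k,d}$ to $\A_d$ (it is just a specialization of the generating polynomial-coefficients), and it kills the ideal $I$ because the images $F(u),G(u)$ satisfy precisely the Wronskian identities that the generators of $I$ encode — here one uses that $F$ has $b$-degree filtration matching the shape in \Ref{new f,g}, guaranteed by Lemma \ref{lem on solutions}, so that $\tilde F_{k+i}$ and $\tilde G_{k+d+1+i}$ are genuinely divisible by the right powers of $b$. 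This gives the algebra homomorphism $\eta : \Ol \to \A_d$.

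Finally, the three displayed identities are immediate once $\eta$ is known to be a homomorphism: $\eta(\{f\}(u)) = F(u)$ and $\eta(\{g\}(u)) = G(u)$ because $\eta$ applied coefficientwise to \Ref{f g} reproduces the expansions of $F,G$ written just above Lemma \ref{basic lem}; and $\eta(\D_\Ol) = \D$ because $\eta$ sends $\Wr(\{f\},\{g\})\mapsto\Wr(F,G) = (d+1)W$, $\Wr(\{f\}',\{g\}')\mapsto\Wr(F',G') = (d+1)U$, so the ratios defining the coefficients $F_1(u),F_2(u)$ of $\D_\Ol$ go to $W'/W$ and $U/W$, which are the coefficients of $\D$.

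The main obstacle is the well-definedness on the quotient: verifying that $\eta$ annihilates every generator of $I$, not just that the map on $\mc C_{k,d}$ exists. This amounts to checking that the Wronskian $\Wr(F,G)$ of the Lemma \ref{lem on solutions} solutions really has degree exactly $2k+d$ with leading coefficient $d+1$ and that $\Wr(F',G')$ has degree $2k+d-1$ — equivalently, that the higher coefficients $U_{2k+d+i}$, $V_{2k+d-2+i}$ vanish and the first pair satisfies the mixed relation $V_{2k+d-1} - U_{2k+d}b = 0$. All of this follows from $\D F = \D G = 0$ together with the hypotheses that every solution of $\D y = 0$ is a polynomial and $\chi(\al) = (\al-k)(\al-k-d-1)$ (which forces the kernel exponents at infinity to be exactly $k$ and $k+d+1$), but translating "$F,G$ span the kernel of $\D$" into the precise vanishing of the Wronskian coefficients indexing $I$ is the computational heart of the argument, parallel to the construction in the proof of Theorem \ref{thm on algebra II}.
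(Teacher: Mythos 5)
Your proposal is correct and takes essentially the same route as the paper's (very terse) proof: the paper reduces everything to $\eta(\{f\}(u))=F(u)$, $\eta(\{g\}(u))=G(u)$ "by the definition of $\Ol$", and your computation — $\Wr(F,G)=cW$, $\Wr(F',G')=cU$ from $\D F=\D G=0$, hence all generators of the ideal $I$ are killed — is exactly the verification the paper leaves implicit. One harmless imprecision: the coefficient $c$ of $u^{2k+d}$ in $\Wr(F,G)$ equals $d+1$ only up to a nilpotent correction (compare $W_0=d+1+w_0$ in \Ref{Wronskian}), but this does not affect anything, since the same invertible $c$ multiplies both $W$ and $U$, so it cancels in the ratios defining $\eta(\D_\Ol)$ and plays no role in the vanishing of the relations $U_{2k+d+i}$, $V_{2k+d-2+i}$ and $V_{2k+d-1}-U_{2k+d}b$.
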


Here $\eta(\{f\}(u))$ is the polynomial in $u$ obtained from
$\{f\}(u)$ by replacing the coefficients with their images in
$\A_d$. Similarly, $\eta(\{g\}(u))$ and $ \eta(\D_\Ol)$ are defined.

\begin{proof}
It is enough to prove that $\eta(\{f\}(u))=F(u),\, \eta(\{g\}(u))=G(u)$
and this follows from the definition of $\Ol$.
\end{proof}

Lemma \ref{basic lem}
assigns a homomorphism $\eta:\Ol\to \A_d$ to every differential operator
$\D$ satisfying the assumptions of Lemma \ref{lem on solutions}.

The homomorphism $\eta$ of Lemma \ref{basic lem} is such that
\vvn.2>
\beq
\label{special}
\eta (\Wr(\{f\}(u),\{g\}(u)))\ \in\ \C[u]\ .
\vv.2>
\eeq
We call an arbitrary homomorphism $\eta :\Ol\to\A_d$ {\it
a special homomorphism} if $\eta : \{b\}\mapsto b$ and
$\eta$ has property \Ref{special}.

\subsection{Special homomorphisms and Bethe eigenleaves}
\label{Special hom and leaves}
Under notations of Section \ref{Special homomorphisms},
define $n$ by the formula
$n=2k+d$. Then $d=n-2k$. Define $\bs\la = (k+d,k) = (n-k,k)$.

For $\bs a\in \C^n$, consider the $\B$-module $\Ma$ and its submodule $\Mal$,
see definitions in Section \ref{sec Deformed Ma}.
Assume that $\Mal$ has a Bethe eigenleaf $\Malp$.
Consider the universal differential operator $\D_\Malp$ of the Bethe
eigenleaf $\Malp$.
By Lemmas \ref{lem on leaf} and \ref{lem on solutions on leaf},
the differential operator $\D_\Malp$ satisfies the assumptions of
Lemma \ref{lem on solutions}, if we identify the operator
$B_{21}:\Malp \to\Malp$ in Lemmas
\ref{lem on leaf} and \ref{lem on solutions on leaf} with the element
$b\in \A_d$ in Lemma \ref{lem on solutions}.

By Lemma \ref{basic lem},
the differential operator $\D_\Malp$ determines
a special homomorphism $\eta :\Ol\to\A_d$, which will be called {\it the
special homomorphism associated with a Bethe eigenleaf}.
We have $\eta(\D_\Ol)=\D_\Malp$ by Lemma \ref{basic lem}.

\subsection{Wronski homomorphisms} Set again $n= 2k+d$.
The Wronskian $\Wr(\{f\}(u),\{g\}(u))\in\Ol[u]$ has the form
\beq
\label{Wronskian}
\Wr(\{f\}(u),\{g\}(u))\ = \ \sum_{j=0}^{n}(-1)^jW_ju^{n-j}\ ,
\eeq
with $W_j\in \Ol$ for all $j$ and $W_0 = d+1+ w_0$,
where $w_0$ is an element of the ideal $\langle\{b\}\rangle\subset \Ol$.
Thus, the coefficient $W_0$ is invertible in $\Ol$.

Let $\sigma_s$, $s=1,\dots, n$, be indeterminates. Define a grading on
$\C[\sigma_1,\dots,\sigma_n]$ by setting $\deg \sigma_s=s$ for all $s$.
The algebra homomorphism,
\vvn.2>
\be
\pi_{\bs\la}\ :\ \C[\sigma_1,\dots,\sigma_n]\ \to\ \Ol\ , \qquad
\sigma_s\ \mapsto\ \frac{W_s}{W_0}\ , \qquad s=1,\dots, n\ ,
\vv.2>
\ee
will be called the {\it Wronski homomorphism} for $\Ol$. The composition
\vvn.4>
\be
\pi_{\bs\la}^0\ =\ p_{\bs\la}^0\pi_{\bs\la}
\ :\ \C[\sigma_1,\dots,\sigma_n]\ \to\ \Olo\
\vv.4>
\ee
will be called the {\it Wronski homomorphism} for $\Olo$.
Both Wronski homomorphism $\pi_{\bs\la}^0$ are graded.

\medskip
\noindent
{\bf Remark.}\ The map
$\pi^0_{\bs\la} : \C[\sigma_1,\dots,\sigma_n] \to \Olo$ is the standard Wronski map,
see for example \cite{EG}.

\subsection{Fibers of Wronski map}
\label{comalg}
Let $A$ be a commutative $\C$-algebra. The algebra $A$ considered as an $A$-module
is called the {\it regular representation \/} of $A$. The dual space $A^*$ is
naturally an $A$-module, which is called the {\it coregular representation\/}.

A bilinear form $(\,{,}\,):A\otimes A\to\C$
is called {\it invariant\/} if $(ab,c)=(a,bc)$ for all $a,b,c\in A$.
A finite-dimensional commutative algebra $A$ with an invariant
nondegenerate symmetric bilinear form ${(\,{,}\,):A\otimes A\to\C}$ is called
a {\it Frobenius algebra\/}.

\medskip
For $\bs a\in\C^n$, let ${I^\O_\lba}$ be the ideal
in $\O_{\bs\la}\>$ generated by the elements $\pi(\sigma_s)-a_s$,
$s=1,\dots, n$.
Let
\vvn-.1>
\beq
\label{Olaa}
\O_\lba\>=\,\O_{\bs\la}/I^\O_\lba
\vv.3>
\eeq
be the quotient algebra. The algebra $\O_\lba$
is a scheme-theoretic fiber of the Wronski homomorphism.

\begin{lem}\label{local Wr}
If the algebra $\O_\lba$ is finite-dimensional, then it is a
Frobenius algebra.
\end{lem}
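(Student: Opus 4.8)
The plan is to exhibit $\Ola$ as a zero-dimensional complete intersection over $\C$ and then invoke the classical fact that an Artinian complete intersection is a Frobenius algebra. By Corollary~\ref{cor on algebra II} (equivalently, by the definition $\Ol=\mc C_{k,d}/I$) we have $\Ol\cong\A_d\otimes\Olo$, where $\Olo=\C[\{f,g\}]$ is a polynomial algebra in the $n=2k+d$ variables listed in~\Ref{f,g}. Writing $R$ for the polynomial algebra $\C[b]\otimes\C[\{f,g\}]$ in $n+1$ variables, this reads $\Ol=R/(b^{d+1})$. By~\Ref{Olaa}, the fiber $\Ola=\Ol/I^\O_\lba$ is obtained from $\Ol$ by imposing the $n$ relations $\pi_{\bs\la}(\sigma_s)-a_s=0$, $s=1,\dots,n$; these elements are well defined in $\Ol$ because $W_0$ is invertible there. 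Choosing lifts $p_s\in R$ of $\pi_{\bs\la}(\sigma_s)$, we obtain
\be
\Ola\ \cong\ R/J\ ,\qquad J\,=\,(\,b^{d+1},\ p_1-a_1,\ \dots,\ p_n-a_n\,)\ ,
\ee
so $\Ola$ is the quotient of an $(n+1)$-variable polynomial ring by an ideal with $n+1$ generators.

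Now assume $\Ola$ is finite-dimensional over $\C$; then $R/J$ is Artinian, hence of Krull dimension $0$. For every maximal ideal $\m\subset R$ containing $J$, the localization $R_\m$ is a regular local ring of dimension $n+1$, and the $n+1$ generators of $J$ generate an $\m$-primary ideal; therefore they form a regular sequence in $R_\m$. Thus $\Ola$ is locally a complete intersection, and self-duality of the Koszul complex on this regular sequence gives $\on{Ext}^{n+1}_{R_\m}(R_\m/JR_\m,R_\m)\cong R_\m/JR_\m$, so the canonical module of $\Ola$ is locally free of rank one; in other words $\Ola$ is a Gorenstein Artinian $\C$-algebra. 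Finally, Gorensteinness of an Artinian algebra $A$ means precisely that $A^{*}=\on{Hom}_\C(A,\C)$ is free of rank one over $A$; fixing a generator $\lambda$ of $A^{*}$ and putting $(x,y)=\lambda(xy)$ produces a bilinear form on $A=\Ola$ which is symmetric and invariant by commutativity and associativity of the product, and nondegenerate because $A^{*}$ is $A$-free of rank one. Hence $\Ola$ is a Frobenius algebra.

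The one step that needs care is the passage from ``finite-dimensional'' to ``complete intersection'', which rests on the standard fact that in a Cohen--Macaulay (here even regular) local ring of dimension $m$, any $m$ elements generating a primary ideal form a regular sequence; consequently an Artinian quotient of a regular ring by as many elements as its dimension is automatically a complete intersection, hence Gorenstein. One could instead try to identify $\Ola\cong\A_d\otimes\Ola^{0}$ for the corresponding fiber $\Ola^{0}$ of the classical Wronski map and invoke that a tensor product of Frobenius algebras is again Frobenius; but since $\pi_{\bs\la}(\sigma_s)$ need not lie in the polynomial part $\Olo\subset\Ol$, that identification is less transparent than the complete-intersection argument just given.
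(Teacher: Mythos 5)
Your proposal is correct and follows essentially the same route as the paper: both reduce to the observation that $\O_\lba$ is a finite-dimensional quotient of the polynomial ring $\C[f_i,g_i,b]$ in $n+1$ variables by an ideal with $n+1$ generators ($b^{d+1}$ together with lifts of $\pi_{\bs\la}(\sigma_s)-a_s$). The only difference is that the paper then cites Lemma 3.9 of \cite{MTV3} for the fact that such a quotient is Frobenius, whereas you prove that fact directly via the complete-intersection/Gorenstein argument, which is a valid filling-in of the cited step.
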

\begin{proof}

We have a natural isomorphism
\be
\Ol\ \simeq \ \A_d\otimes \C[\{f,g\}] \ =\
\C[f_i,g_i,b]/\langle b^{d+1}\rangle\ .
\ee
The ideal $I^\O_\lba\subset \Ol$ is generated by $n$ elements
$\pi(\sigma_s)-a_s$, $s=1,\dots, n$. Hence,
$\O_\lba$ is the quotient of the polynomial algebra
$\C[f_i,g_i,b]$ with $n+1$ generators by an ideal with
$n+1$ generators. Any such a
finite-dimensional quotient is a Frobenius algebra, see for instance,
Lemma 3.9 in \cite{MTV3}.
\end{proof}

\section{Isomorphisms}
\label{sec ISOMORR}
\subsection{Isomorphism $\tau_{\bs\la} : \Ol\to\B_{\bs\la}$}
\label{Isomorphism tau}
Let $\Vl$ be a deformed isotypical component of $\V^S$, see
Section \ref{Deformed isotypical components}.
Let $\Bl$ be
the image of $\B$ in ${\rm End} (\Vl)$.
Denote $\hat B_{ij}\in\Bl$ the image of the standard generators $B_{ij}\in\B$.

Consider a map
\vvn-.1>
\be
\label{def tau}
\tau_{\bs\la}\ :\ \O_{\bs\la}\ \to\ \B_{\bs\la}\,,
\qquad
F_{ij}\ \mapsto\ \hat B_{ij}\,,
\vv.2>
\ee
where the generators $F_{ij}$ of the algebra
$\O_{\bs\la}$ are defined in \Ref{F-ii}.
In particular,
\beq
\label{F21=e21}
\tau_{\bs\la}\ : \ \F_{21}=\{b\}\ \mapsto \hat B_{21} = e_{21}|_\Vl\ .
\eeq

\begin{thm}
\label{first}
The map $\tau_{\bs\la}$ is a well-defined isomorphism of graded algebras.
\end{thm}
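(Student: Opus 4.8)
The plan is to verify that $\tau_{\bs\la}$ is a well-defined graded algebra homomorphism and then that it is bijective.

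\emph{Well-definedness.} Since $\tau_{\bs\la}$ is prescribed on the algebra generators $F_{ij}$ of $\Ol$ (Lemma~\ref{lem on coeff}), the only issue is that every polynomial relation among the $F_{ij}$ in $\Ol$ must map to a relation among the $\hat B_{ij}$ in $\Bl$. I would deduce this from the special homomorphisms attached to Bethe eigenleaves in Section~\ref{Special hom and leaves}. For $\bs a\in\R^n$ with distinct real roots and each eigenleaf $\Malp$ of $\Mal$, Lemma~\ref{basic lem} produces a homomorphism $\eta_{\phi,\bs a}:\Ol\to\A_{n-2k}$ with $\eta_{\phi,\bs a}(\D_\Ol)=\D_\Malp$; comparing the coefficients of these two operators, and identifying $\A_{n-2k}$ with the image of $\B$ in $\End(\Malp)$ via $b\mapsto B_{21}|_\Malp=e_{21}|_\Malp$ (Lemma~\ref{lem on leaf}), one gets $\eta_{\phi,\bs a}(F_{ij})=B_{ij}|_\Malp$. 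On the other hand, the reductions $\Vl\to\Mal$ over all such $\bs a$ are jointly injective, because $\bigcap_{\bs a}I^\V_{\bs a}=\V^S\cap\bigl(V^{\otimes n}\otimes\bigcap_{\bs a}I_{\bs a}\bigr)=0$ — a symmetric polynomial vanishing at every point with distinct real coordinates is $0$, as $\R^n$ is Zariski dense — and each $\Mal$ is the sum of its eigenleaves, which have pairwise distinct $\B^0$-eigenvalues, by Lemma~\ref{lem sum of leaves}. Hence $\Bl$ embeds into the product over all pairs $(\phi,\bs a)$ of the images of $\B$ in $\End(\Malp)$. If $P(F_{ij})=0$ in $\Ol$, then $P(B_{ij}|_\Malp)=\eta_{\phi,\bs a}\bigl(P(F_{ij})\bigr)=0$ for every leaf, so $P(\hat B_{ij})=0$ in $\Bl$. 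Thus $\tau_{\bs\la}$ is a well-defined algebra homomorphism; it is graded because $F_{ij}$ and $\hat B_{ij}$ both have degree $j-i$ (Lemmas~\ref{lem Ol degree} and~\ref{lem Bethe degree}). Surjectivity is then immediate: $\Bl$ is generated as an algebra by the $\hat B_{ij}=\tau_{\bs\la}(F_{ij})$, while the $F_{ij}$ generate $\Ol$.

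\emph{Injectivity.} This is the heart of the matter, and I would settle it by a rank count over $R:=\C[\sigma_1,\dots,\sigma_n]=\C[z_1,\dots,z_n]^S$. The map $\tau_{\bs\la}$ is $R$-linear: by Lemma~\ref{Uz} the $F_{1j}$ go to the multiplication operators, and by Newton's identities this identifies the copy of $R$ inside $\Ol$ coming from the Wronski homomorphism $\pi_{\bs\la}$ with the one inside $\Bl$. Both $\Ol$ and $\Bl$ are finite $R$-modules ($\Bl\subset\End_R(\Vl)$, and $\Vl$ is free of finite rank by Lemma~\ref{lem on isotyp decomp}). Moreover $\Ol\cong\A_{n-2k}\otimes\Olo$ (Corollary~\ref{cor on algebra II}) is a finite free $R$-module — the Wronski map is finite and flat, cf.\ \cite{EG}, \cite{MTV3} — and comparing the graded characters of Lemma~\ref{char OL} with those of Lemma~\ref{lem on isotyp decomp} gives $\mathrm{rank}_R\Ol=\mathrm{rank}_R\Vl=:r$. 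Since $\tau_{\bs\la}$ is a surjection of $R$-modules, $\mathrm{rank}_R\Bl\le r$. For the reverse inequality, pick $\bs a_0\in\R^n$ with distinct real roots lying in the locus where the fiber dimension of the finite $R$-module $\Bl$ equals $\mathrm{rank}_R\Bl$; this locus is nonempty Zariski open, hence meets the Zariski-dense set $\R^n$. One has $I^\V_{\bs a_0}=\m_{\bs a_0}\V^S$ (both $\V^S/I^\V_{\bs a_0}$ and $\V^S/\m_{\bs a_0}\V^S$ have dimension $2^n$ by Lemmas~\ref{VSfree} and~\ref{factor=weyl}), so $\Vl\otimes_R\kappa(\bs a_0)=\mc M_{\bs\la,\bs a_0}$ and $\Bl\otimes_R\kappa(\bs a_0)$ surjects onto the image $\B'$ of $\B$ in $\End_\C(\mc M_{\bs\la,\bs a_0})$. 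By Lemma~\ref{lem sum of leaves}, $\mc M_{\bs\la,\bs a_0}$ is the direct sum of its eigenleaves, and because these have distinct $\B^0$-eigenvalues, $\B'$ is the direct sum over the leaves of the images of $\B$ in $\End_\C(\mc M_{\phi,\bs\la,\bs a_0})$; by Lemma~\ref{lem on leaf} each such image is $\C[\,e_{21}|_{\mc M_{\phi,\bs\la,\bs a_0}}\,]$, of $\C$-dimension $n-2k+1=\dim_\C\mc M_{\phi,\bs\la,\bs a_0}$, so $\dim_\C\B'=\dim_\C\mc M_{\bs\la,\bs a_0}=r$. Hence $\mathrm{rank}_R\Bl=\dim_\C\bigl(\Bl\otimes_R\kappa(\bs a_0)\bigr)\ge r$, whence $\mathrm{rank}_R\Bl=r=\mathrm{rank}_R\Ol$. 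Consequently $\ker\tau_{\bs\la}$ has $R$-rank zero, i.e.\ is torsion; being a submodule of the torsion-free $R$-module $\Ol$, it is zero, and $\tau_{\bs\la}$ is an isomorphism.

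\emph{Main obstacle and an alternative.} The well-definedness and surjectivity steps are bookkeeping once the eigenleaf machinery of Sections~\ref{sec More}--\ref{Ol and B} is available; the real work is the computation of $\mathrm{rank}_R\Bl$. The delicate point there is that the "distinct real roots" locus, needed in order to invoke Lemma~\ref{lem sum of leaves}, is not Zariski open, so one must intersect it with the Zariski-open locus on which $\Bl$ has minimal fiber dimension — legitimate precisely because $\R^n$ is Zariski dense in $\C^n$. An alternative route to injectivity would be to prove first that $\Vl$ is free of rank one over $\Bl$ (essentially Theorem~\ref{first1}): then $\dim_\C(\Bl)_m=\dim_\C(\Vl)_m=\dim_\C(\Ol)_m$ for all $m$ and injectivity follows at once from the graded surjectivity; but establishing that the annihilator of a cyclic vector vanishes requires its own ingredient, namely a nondegenerate invariant bilinear form on $\Vl$ for which $\Bl$ is self-adjoint.
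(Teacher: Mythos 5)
Your proposal is correct, and while the well-definedness, gradedness and surjectivity steps follow the paper's own route (restrict to Bethe eigenleaves over real $\bs a$ with distinct roots, use the special homomorphisms of Lemma~\ref{basic lem} to match $\eta(F_{ij})$ with $\hat B_{ij}|_\Malp$, and conclude by Zariski density of such fibers — the paper phrases this as a section of the bundle with fiber $\End(\Mal)$ being nonzero on some fiber), your injectivity argument is genuinely different. The paper separates elements of $\Ol$ directly: writing a nonzero element as $\sum_{j\ge j^0}R_j(\{f_i\},\{g_i\})\{b\}^j$, it evaluates $R_{j^0}$ at a generic pair $(F_0,G_0)$ and invokes Lemma~\ref{lem on pairs of polynomials and leaves} to produce a single eigenleaf on which the image acts nontrivially; this is short, needs no commutative algebra over $R=\C[\sigma_1,\dots,\sigma_n]$, and uses the explicit presentation $\Ol\simeq\A_d\otimes\C[\{f,g\}]$. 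Your route instead runs a rank count over $R$, with the lower bound on $\operatorname{rank}_R\Bl$ coming from a fiber computation at a generic real point via Lemma~\ref{lem sum of leaves}; it avoids Lemma~\ref{lem on pairs of polynomials and leaves} altogether and proves in passing facts the paper establishes only afterwards (the $\C[\sigma_1,\dots,\sigma_n]$-linearity of $\tau_{\bs\la}$, i.e.\ Lemma~\ref{symm OK}, and the rank equality underlying Theorem~\ref{first1}), and you correctly flag the need to intersect the non-Zariski-open real locus with the open locus of minimal fiber dimension. The one ingredient you assert without proof is that $\Ol$ is a finite torsion-free (free) $R$-module \emph{via $\pi_{\bs\la}$}: the finite-flat Wronski map you cite is $\pi^0_{\bs\la}:R\to\Olo$, and in the paper even injectivity of $\pi_{\bs\la}$ appears only as Corollary~\ref{cor emb}, \emph{after} Theorem~\ref{first}, so to avoid circularity you should add the short transfer argument — filter $\Ol$ by powers of $\{b\}$, note that each graded piece is $\Olo$ with $R$ acting through $\pi^0_{\bs\la}$ (since $\pi_{\bs\la}(\sigma_s)\equiv\pi^0_{\bs\la}(\sigma_s)$ modulo $\langle\{b\}\rangle$), and use freeness of $\Olo$ over $R$; with that sentence supplied, your rank computation (using the character identities of Lemmas~\ref{char OL} and~\ref{lem on isotyp decomp}) is complete and yields injectivity as claimed.
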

\begin{proof}
Let $R(F_{ij})$ be a polynomial in generators $F_{ij}\in \Ol$ with
complex coefficients. Assume that $R(F_{ij})$ is equal to zero in
$\O_{\bs\la}$. We will prove that the corresponding polynomial $R(\hat
B_{ij})$ is equal to zero in $\B_{\bs\la}$. This will prove that
$\tau_{\bs\la}$ is a well-defined algebra homomorphism.

Consider the vector bundle over $\C^n$ with fiber $\Mal$ over a point
$\bs a$.
The polynomial
$R(\hat B_{ij})$
defines a section of the associated bundle with
fiber ${\rm End}(\Mal)$.
If $R(\hat B_{ij})$ is not equal to zero identically, then there exist
a fiber $\Mal$ and a Bethe eigenleaf $\Malp
\subset \Mal$, such that $R(\hat B_{ij}|_\Malp)\in \End(\Malp)$ is not equal to zero.
Let
\beq
\label{dd}
\D_{\Malp}\ =\ \der^2 - \bar B_1(u)\partial + \bar B_2(u)\ ,
\eeq
be the universal differential operator of the Bethe eigenleaf
$\Malp$, see \Ref{D-Malp}. Write
\beq
\label{dk}
\bar B_1(u)\ = \ \sum_{j=1}^\infty \,{\bar B_{1j} u^{-j}}\ , \qquad
\bar B_2(u)\ = \ \sum_{j=1}^\infty \, {\bar B_{2j} u^{-j}}\ .
\eeq
Then $\bar B_{ij}=\hat B_{ij}|_\Malp$ for all $(i,j)$.
Consider the special homomorphism $\eta : \Ol\to\A_d$ associated
with the Bethe eigenleaf
$\Malp$, see Sections \ref{Special homomorphisms} and
\ref{Special hom and leaves}.
By Lemma \ref{basic lem}, $\eta(\D_\Ol)=D^\Malp$. This equality
contradicts to the fact that
$R(F_{ij})$ is zero in $\Ol$ and
$R(\bar B_{ij})$ is nonzero in ${\rm End}(\Malp)$.
Thus, $R(\hat B_{ij})$ is zero in $\Bl$.

By Lemmas \ref{lem Bethe degree} and \ref{lem Ol degree},
the elements $F_{ij}$ and $\hat B_{ij}$
are of the same degree. Therefore, the homomorphism
$\tau_{\bs\la}$ is graded.

Since the elements $\hat B_{ij}$ generate the algebra $\B_{\bs\la}\,$,
the map $\tau_{\bs\la}$ is surjective.

Let $R(F_{ij})$ be a polynomial in generators $F_{ij}\in \Ol$ with complex
coefficients. Assume that $R(F_{ij})$ is a nonzero element of $\Ol$.
We will prove that the corresponding polynomial
$R(\hat B_{ij})$
is not equal to zero in $\B_{\bs\la}$. This will prove that
the homomorphism $\tau_{\bs\la}$ is injective.

Since $\Ol\simeq \C[\{f,g\}]\otimes \A_d$.
Any nonzero element $R(F_{ij})\in\Ol$ can be written in the form
\bea
R(F_{ij}) \ = \ \sum_{j=j^0}^d\, R_j(\{f_i\},\{g_i\})\,\{b\}^j\ ,
\eea
where $R_j(\{f_i\},\{g_i\}) \in \C[\{f_i\},\{g_i\}]$ and
$R_{j^0}(\{f_i\},\{g_i\})$
is a nonzero polynomial.

For generic numbers $F_i^0, G_i^0\in\C$, we have
$R_{j^0}(F^0_i,G^0_i)\neq 0$. Consider two polynomials
$F_0(u)=u^k+\sum_i F^0_iu^i$ and $G_0(u)=u^{k+d+1}+\sum_i
G^0_iu^i$. By Lemma \ref{lem on pairs of polynomials and leaves},
there exists a Bethe eigenleaf such that $\D_{\Malp,0} =
\D_{F_0,G_0}$. Let $\bar B_{ij}$ be the coefficients of $\D_\Malp$,
see \Ref{dd} and \Ref{dk}. Then $R(\bar B_{ij})\neq 0$. Hence,
$R(\hat B_{ij})$ is not equal to zero in $\B_{\bs\la}$.
\end{proof}

\subsection{Algebras $\Ol$ and $\Bl$ as $\C[\sigma_1,\dots,\sigma_n]$-modules}
\label{sec sym-modules}

The algebra $\C[z_1,\dots,z_n]^S$ $=$
\linebreak
$\C[\sigma_1,\dots,\sigma_n]$
is embedded into the algebra $\B_{\bs\la}$
as the subalgebra of operators of multiplication by symmetric polynomials,
see Lemma~\ref{Uz}.
This embedding makes $\B_{\bs\la}$ a
$\C[\sigma_1,\dots,\sigma_n]$-module.

The Wronski homomorphism $\pi_{\bs\la} : \C[\sigma_1,\dots,\sigma_n]
\to \Ol$ makes $\Ol$ a $\C[\sigma_1,\dots,\sigma_n]$-module.

\begin{lem}\label{symm OK}
The map $\,\tau_{\bs\la}:\O_{\bs\la}\to\B_{\bs\la}$ is
an isomorphism of $\,\C[\sigma_1,\dots,\sigma_n]\<$-modules, that is,
for any $s=1,\dots,n$, $\tau_{\bs\la}(\pi_{\bs\la}(\sigma_s))$
is the operator of multiplication by $\sigma_s$.
\end{lem}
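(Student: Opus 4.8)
The plan is to reduce the claim to the identification of what $\sigma_s$ corresponds to on both sides and then invoke the already-established isomorphism $\tau_{\bs\la}$. First I would recall that by Lemma~\ref{Uz} the operator of multiplication by $\sigma_s$ on $\V^S$, hence on the deformed isotypical component $\Vl$, coincides with the image of a particular standard generator of $\B$; more precisely, the coefficients of the denominator $W(u)=\prod_{i=1}^n(u-z_i)$ of $\bar B_1(u)$ in Lemma~\ref{elm on diff oper} are, up to sign, the $\sigma_s$, and these are elements of the Bethe algebra expressible through the $B_{1j}$ via Newton's identities. So multiplication by $\sigma_s$ on $\Vl$ is $P_s(\hat B_{1j})$ for a fixed universal polynomial $P_s$ (independent of $\bs\la$), coming from the fact that $B_{1j}$ acts as multiplication by the power sum $\sum z_i^{j-1}$.

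Next I would do the matching computation on the $\Ol$ side. By definition of the Wronski homomorphism $\pi_{\bs\la}$, we have $\pi_{\bs\la}(\sigma_s)=W_s/W_0$, where the $W_j$ are the coefficients of $\Wr(\{f\}(u),\{g\}(u))$ as in \Ref{Wronskian}. On the other hand, the normalized Wronskian $W(u)/W_0 = \sum_j (-1)^j (W_j/W_0) u^{n-j}$ is exactly the denominator of $F_1(u)=\Wr'(\{f\},\{g\})/\Wr(\{f\},\{g\})$, so by the expansion \Ref{F-ii} the $F_{1j}$ are the coefficients of the logarithmic derivative of $W(u)/W_0$, i.e.\ the power sums of its roots. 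Hence $W_s/W_0 = P_s(F_{1j})$ for the very same universal polynomial $P_s$ (Newton's identities again). Applying the algebra homomorphism $\tau_{\bs\la}$, which sends $F_{1j}\mapsto \hat B_{1j}$, gives $\tau_{\bs\la}(\pi_{\bs\la}(\sigma_s)) = \tau_{\bs\la}(P_s(F_{1j})) = P_s(\hat B_{1j})$, which by the first paragraph is precisely the operator of multiplication by $\sigma_s$ on $\Vl$. Since $\tau_{\bs\la}$ is already known to be an algebra isomorphism by Theorem~\ref{first}, and it carries the subalgebra $\pi_{\bs\la}(\C[\sigma_1,\dots,\sigma_n])$ onto the subalgebra of multiplications by symmetric polynomials compatibly with the generators $\sigma_s$, it is an isomorphism of $\C[\sigma_1,\dots,\sigma_n]$-modules.

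The step I expect to require the most care is verifying that the \emph{same} universal polynomial $P_s$ governs both sides, i.e.\ that both $B_{1j}$ on $\Vl$ and $F_{1j}$ in $\Ol$ are literally the power sums of the roots of the respective degree-$n$ monic (after normalization) polynomials $W(u)$, with matching leading coefficients. On the $\B$ side this is the content of Lemma~\ref{elm on diff oper} together with the proof of Lemma~\ref{Uz}; on the $\Ol$ side it is Corollary~\ref{cor on diff eqn} and the normalization $W_0=d+1+w_0$ being invertible, so that $W(u)/W_0$ is genuinely monic of degree $n$ and $F_{11}=2k+d=n-1$ matches $\deg W - 1$. Once the bookkeeping of signs and the invertibility of $W_0$ (resp.\ the corresponding leading coefficient on the $\V^S$ side) is pinned down, the rest is a formal consequence of $\tau_{\bs\la}$ being a homomorphism sending $F_{1j}\mapsto\hat B_{1j}$.
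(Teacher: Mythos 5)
Your argument is correct and is essentially the paper's own proof: the paper's entire proof consists of the two formulae $B_1(u)=\pm\bigl(e_{11}(u)+e_{22}(u)\bigr)$ and $F_1(u)=-\Wr'(\{f\},\{g\})/\Wr(\{f\},\{g\})$, i.e.\ both series are logarithmic derivatives of the respective degree-$n$ polynomials ($\prod_i(u-z_i)$ acting on $\V^S_{\bs\la}$, and the normalized Wronskian in $\O_{\bs\la}$), so that $\tau_{\bs\la}(F_{1j})=\hat B_{1j}$ forces $\tau_{\bs\la}(\pi_{\bs\la}(\sigma_s))$ to be multiplication by $\sigma_s$ --- exactly the Newton-identities matching you spell out. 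One cosmetic slip: $F_{11}=2k+d=n=\deg\Wr(\{f\},\{g\})$, not $n-1$ (the leading coefficient of the logarithmic-derivative expansion equals the degree itself), which does not affect your argument.
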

\begin{proof}
The proof follows from the two formulae:
\beq
\label{F1}
B_1(u)\ = \ e_{11}(u) + e_{22}(u)\ ,
\qquad
F_1(u)\,=\,-\,\frac{\Wr'(\{f\}(u),\{g\}(u))}
{\Wr(\{f\}(u),\{g\}(u))}\ .
\eeq
\end{proof}

\begin{cor}
\label{cor emb}
The Wronski homomorphism $\pi_{\bs\la} : \C[\sigma_1,\dots,\sigma_n]
\to \Ol$ is an embedding.
\end{cor}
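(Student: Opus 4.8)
The plan is to derive Corollary \ref{cor emb} as an immediate consequence of Lemma \ref{symm OK} together with Theorem \ref{first}. The key observation is that an isomorphism of modules must restrict to an injection on the image of the structure map, so I want to transfer the injectivity question about $\pi_{\bs\la}$ into the algebra $\B_{\bs\la}$, where injectivity is transparent.

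First I would note that by Theorem \ref{first} the map $\tau_{\bs\la} : \Ol \to \B_{\bs\la}$ is an algebra isomorphism, in particular injective. Next, by Lemma \ref{symm OK}, for every $s = 1,\dots,n$ the element $\tau_{\bs\la}(\pi_{\bs\la}(\sigma_s)) \in \B_{\bs\la}$ is the operator of multiplication by $\sigma_s$ on $\Vl$. So the composite $\tau_{\bs\la}\circ\pi_{\bs\la} : \C[\sigma_1,\dots,\sigma_n] \to \B_{\bs\la}$ coincides with the natural map sending a symmetric polynomial to the corresponding multiplication operator on the deformed isotypical component $\Vl$. By Lemma \ref{lem on isotyp decomp}(iii), $\Vl$ is a \emph{free} $\C[z_1,\dots,z_n]^S = \C[\sigma_1,\dots,\sigma_n]$-module of positive rank, so a nonzero symmetric polynomial acts as a nonzero operator; hence the multiplication map $\C[\sigma_1,\dots,\sigma_n] \to \End(\Vl)$ is injective, and therefore so is $\tau_{\bs\la}\circ\pi_{\bs\la}$.

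Finally, since $\tau_{\bs\la}$ is injective and $\tau_{\bs\la}\circ\pi_{\bs\la}$ is injective, it follows formally that $\pi_{\bs\la}$ is injective: if $\pi_{\bs\la}(p) = 0$ then $\tau_{\bs\la}(\pi_{\bs\la}(p)) = 0$, so $p = 0$. Thus $\pi_{\bs\la}$ is an embedding.

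There is no real obstacle here; the statement is a bookkeeping corollary. The only point requiring a moment's care is the justification that multiplication by a nonzero symmetric polynomial is a nonzero operator on $\Vl$, which is where freeness (equivalently, torsion-freeness) of $\Vl$ as a $\C[\sigma_1,\dots,\sigma_n]$-module from Lemma \ref{lem on isotyp decomp}(iii) is used; alternatively one may invoke Lemma \ref{Uz} and the fact that the rank of $\Vl$ equals that of $\V^{S,0}_{\bs\la}$, which is positive. I would write the proof in two or three sentences.

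\begin{proof}
By Theorem \ref{first}, $\tau_{\bs\la}$ is injective, and by Lemma \ref{symm OK} the composite $\tau_{\bs\la}\circ\pi_{\bs\la}$ sends $\sigma_s$ to the operator of multiplication by $\sigma_s$ on $\Vl$. Since $\Vl$ is a free $\C[\sigma_1,\dots,\sigma_n]$-module of positive rank by Lemma \ref{lem on isotyp decomp}, multiplication by a nonzero element of $\C[\sigma_1,\dots,\sigma_n]$ is a nonzero operator; hence $\tau_{\bs\la}\circ\pi_{\bs\la}$ is injective, and therefore $\pi_{\bs\la}$ is injective.
\end{proof}
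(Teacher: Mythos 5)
Your proof is correct and matches the paper's intended argument: the corollary is stated immediately after Lemma \ref{symm OK} precisely because $\tau_{\bs\la}\circ\pi_{\bs\la}$ identifies with the (injective) action of $\C[\sigma_1,\dots,\sigma_n]$ by multiplication on the free module $\V^S_{\bs\la}$ of positive rank, whence $\pi_{\bs\la}$ is injective. The remark about injectivity of $\tau_{\bs\la}$ is harmless but superfluous, since injectivity of the composite already forces injectivity of $\pi_{\bs\la}$.
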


Consider the projection
$p^\O_{\bs\la} : \Ol \to \Olo$ defined in \Ref{pOl}. The composition
\vvn.3>
\be
\pi^0_{\bs\la} = p^\O_{\bs\la} \pi_{\bs\la}\ :\
\C[\sigma_1,\dots,\sigma_n]\ \to\ \Olo
\vv.3>
\ee
is the standard Wronski map.
Its degree $d^0_{\bs\la}$ is given by the Schubert calculus.
In particular, we have
\beq
\label{d0}
(d+1)\,d^0_{\bs\la}\ = \ \dim\, (V^{\otimes n})_{\bs\la}\ ,
\eeq
where $(V^{\otimes n})_{\bs\la}\subset V^{\otimes n}$ is the
$\glt$-isotypical component corresponding to the irreducible
polynomial $\glt$-representation with highest weight $\bs\la
=(n-k,k)$ and $d=n-2k$.

\begin{prop}
\label{prop on f.d.}
For $\bs a\in\C^n$, let ${I^\O_\lba}$ be the ideal in $\O_{\bs\la}\>$
generated by the elements $\pi(\sigma_s)-a_s$, $s=1,\dots, n$.
Let $\O_\lba\>=\,\O_{\bs\la}/I^\O_\lba$ be the quotient algebra. Then
\vvn.2>
\be
\dim\,
\O_\lba\ = \
\dim\, (V^{\otimes n})_{\bs\la}\ .
\ee
\end{prop}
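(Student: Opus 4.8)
The plan is to reduce the statement to the claim that $\Ol$, regarded as a module over $\C[\sigma_1,\dots,\sigma_n]$ through the Wronski homomorphism $\pi_{\bs\la}$, is \emph{free} of rank $\dim(V^{\otimes n})_{\bs\la}$. Granting this, note that $I^\O_\lba$ is by definition the ideal of $\Ol$ generated by the elements $\pi_{\bs\la}(\sigma_s)-a_s$, so $\O_\lba=\Ol\otimes_{\C[\sigma_1,\dots,\sigma_n]}\C_{\bs a}$, where $\C_{\bs a}=\C[\sigma_1,\dots,\sigma_n]/\langle\sigma_1-a_1,\dots,\sigma_n-a_n\rangle\cong\C$ is the residue field at $\bs a$; tensoring a free module of rank $r$ with a residue field of the base gives a $\C$-space of dimension $r$, so $\dim_\C\O_\lba=\dim(V^{\otimes n})_{\bs\la}$. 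In particular this also shows $\O_\lba$ is finite-dimensional, so Lemma~\ref{local Wr} applies for every $\bs a$.

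To prove the freeness claim I would use the isomorphism of graded algebras $\Ol\simeq\A_d\otimes\Olo$, $d=n-2k$ (Corollary~\ref{cor on algebra II}), under which the ideal $\langle\{b\}\rangle=\ker p^\O_{\bs\la}$ is nilpotent with $\langle\{b\}\rangle^{d+1}=0$, together with the Schubert-calculus input that $\pi^0_{\bs\la}$ makes $\Olo$ a free $\C[\sigma_1,\dots,\sigma_n]$-module of rank $d^0_{\bs\la}$, where $(d+1)\,d^0_{\bs\la}=\dim(V^{\otimes n})_{\bs\la}$ by \Ref{d0}. I would cite this input from \cite{MTV3}: it is built into the identification, recalled in the Introduction, of $\V^{S,0}_{\bs\la}$ with the regular representation of $\A_{n-2k}\otimes\Olo$, since $\V^{S,0}_{\bs\la}$ is a free $\C[\sigma_1,\dots,\sigma_n]$-module of rank $\dim(V^{\otimes n})_{\bs\la}$ by \Ref{iso decom} and the $\sigma_s$ act purely through the $\Olo$-factor. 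The only subtlety is that the module structure on $\Ol$ is through $\pi_{\bs\la}$, not through $1\otimes\pi^0_{\bs\la}$; but since $\pi^0_{\bs\la}=p^\O_{\bs\la}\pi_{\bs\la}$, the two differ exactly by correction terms lying in $\langle\{b\}\rangle$.

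I would resolve this by passing to the $\langle\{b\}\rangle$-adic filtration $\Ol=F^0\supseteq F^1\supseteq\dots\supseteq F^{d+1}=0$ with $F^j=\langle\{b\}\rangle^j\Ol$. Each $\pi_{\bs\la}(\sigma_s)$ preserves this filtration, and since $\pi_{\bs\la}(\sigma_s)\equiv\pi^0_{\bs\la}(\sigma_s)$ modulo $\langle\{b\}\rangle$, on every quotient $F^j/F^{j+1}$ it acts exactly as $\pi^0_{\bs\la}(\sigma_s)$ does on $\Olo$ under the natural identification $F^j/F^{j+1}\simeq\Olo$. Hence $\bigoplus_{j=0}^d F^j/F^{j+1}\simeq\Olo^{\oplus(d+1)}$ as $\C[\sigma_1,\dots,\sigma_n]$-modules, which is free of rank $(d+1)\,d^0_{\bs\la}$. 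Since each $F^j/F^{j+1}$ is free, hence projective, the exact sequences $0\to F^{j+1}\to F^j\to F^j/F^{j+1}\to0$ split over $\C[\sigma_1,\dots,\sigma_n]$, and splicing them from $j=d$ downwards gives $\Ol\simeq\bigoplus_{j=0}^d F^j/F^{j+1}$ as $\C[\sigma_1,\dots,\sigma_n]$-modules. Thus $\Ol$ is free of rank $(d+1)\,d^0_{\bs\la}=\dim(V^{\otimes n})_{\bs\la}$ (consistently with the graded character in Lemma~\ref{char OL}), and the reduction in the first paragraph finishes the proof.

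The step I expect to be the real obstacle is verifying the cited freeness of $\Olo$ over $\C[\sigma_1,\dots,\sigma_n]$: this is the statement that the Wronski map on the relevant Schubert cell is finite and flat of degree $d^0_{\bs\la}$, and one must be sure \cite{MTV3} is quoted in the precise graded-module form used above. Everything after that is formal, the only fine points being that $\Ol$ is finitely generated and bounded below as a graded $\C[\sigma_1,\dots,\sigma_n]$-module, which makes graded Nakayama and the splittings legitimate. If one preferred not to quote freeness of $\Olo$, an alternative is to compute $\dim_\C\O_\lba$ for generic real $\bs a$ directly, via Theorem~\ref{first}, Lemma~\ref{symm OK}, and the decomposition of $\Ma$ into $(d+1)$-dimensional Bethe eigenleaves (Lemmas~\ref{lem sum of leaves}, \ref{lem on pairs of polynomials and leaves}, \ref{lem on isotyp decomp M}), and then invoke flatness of $\Ol$ over $\C[\sigma_1,\dots,\sigma_n]$, obtained as above from the filtration, to propagate the value to all $\bs a$.
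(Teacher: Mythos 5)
Your argument is correct, but it is organized differently from the paper's. The paper proves the proposition in one line by quoting Lemma~\ref{aux lem}: using $\Ol\simeq\A_d\otimes\Olo=\C[\{f,g\},b]/\langle b^{d+1}\rangle$, the algebra $\O_\lba$ is a quotient of a polynomial ring in $n+1$ variables by the $n+1$ elements $b^{d+1}$ and $\pi_{\bs\la}(\sigma_s)-a_s$, and the (unproved, \qed'd) counting lemma gives $\dim\O_\lba=(d+1)\dim\bigl(\Olo/\langle\pi^0_{\bs\la}(\sigma_s)-a_s\rangle\bigr)$; this is then combined with the Schubert-calculus input that every scheme-theoretic fiber of the standard Wronski map has length $d^0_{\bs\la}$, and with \Ref{d0}. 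You instead prove the stronger statement that $\Ol$ is a free $\C[\sigma_1,\dots,\sigma_n]$-module of rank $\dim(V^{\otimes n})_{\bs\la}$, via the $\langle\{b\}\rangle$-adic filtration: your check that $\pi_{\bs\la}(\sigma_s)$ acts on $F^j/F^{j+1}\simeq\Olo$ as $\pi^0_{\bs\la}(\sigma_s)$ is correct (any element of $\langle\{b\}\rangle$ is $\{b\}x$ and maps $F^j$ into $F^{j+1}$), and the splicing of split exact sequences and the base change $\O_\lba=\Ol\otimes_{\C[\sigma_1,\dots,\sigma_n]}\C_{\bs a}$ are fine. The input you flag as the obstacle, freeness of $\Olo$ of rank $d^0_{\bs\la}$, is indeed needed, but it is exactly the same \cite{MTV3}/Schubert-calculus input the paper's own terse proof relies on (constancy of the fiber length of the standard Wronski map), and it is available in the form you want: Theorem~\ref{first2} together with \Ref{iso decom} identifies $\Olo$ with the free module $\sing\V^{S,0}_{\bs\la}$ compatibly with the $\sigma_s$-action, giving rank $d^0_{\bs\la}$ with $(d+1)d^0_{\bs\la}=\dim(V^{\otimes n})_{\bs\la}$. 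What your route buys is that it bypasses the unproved Lemma~\ref{aux lem} and yields flatness of the deformed Wronski homomorphism $\pi_{\bs\la}$ as a byproduct (useful elsewhere, e.g.\ for uniformity in $\bs a$); what it costs is a somewhat longer argument where the paper intends a formal dimension count.
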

\begin{proof}
The proposition follows from Lemma~\ref{aux lem}
\end{proof}

Let $H_s(x_1,\dots,x_m,b)$, $s=1,\dots,m$, be $m$ polynomials in
$\C[x_1,\dots,x_m,b]$ such that
\be
H_s(x_1,\dots,x_m,b)\ = \ \sum_{j=0}^d H_{sj}(x_1,\dots,x_m)\,b^j\ .
\ee
Let $I\subset \C[x_1,\dots,x_m,b]$ be the ideal generated by $m+1$ polynomials:
$b^{d+1}$ and \>$H_s(x_1,\dots,\alb x_m,\alb b)$, \,$s=1,\dots,m$.
Let $I_0\subset \C[x_1,\dots,x_m]$ be the ideal generated by the polynomials
$H_{s0}(x_1,\dots,x_m)$, \,$s=1,\dots,m$.

\begin{lem}
\label{aux lem}
Assume that $\C[x_1,\dots,x_m]/I_0$ is finite-dimensional. Then
\vvn.3>
\be
\rightline{\hfill\qedph$\dim \C[x_1,\dots,x_m,b]/I\ = \
(d+1)\, (\dim \C[x_1,\dots,x_m]/I_0)\ .$\qed}
\vv.3>
\ee
\end{lem}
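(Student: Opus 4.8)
The plan is to filter the algebra $A := \C[x_1,\dots,x_m,b]/I$ by powers of the element $b$ and show that the associated graded pieces are all isomorphic, as $\C$-vector spaces, to $A_0 := \C[x_1,\dots,x_m]/I_0$. Since $b^{d+1}=0$ in $A$, the filtration $A \supset bA \supset b^2A \supset \dots \supset b^{d}A \supset b^{d+1}A = 0$ has exactly $d+1$ steps, so once each quotient $b^jA/b^{j+1}A$ is shown to have dimension $\dim A_0$, summing over $j=0,\dots,d$ gives the claimed equality $\dim A = (d+1)\dim A_0$.

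First I would establish the case $j=0$: the quotient $A/bA$ is $\C[x_1,\dots,x_m,b]/(I + (b))$. Reducing the generators of $I$ modulo $b$, the polynomial $b^{d+1}$ becomes $0$ and each $H_s(x_1,\dots,x_m,b)$ becomes its constant-in-$b$ term $H_{s0}(x_1,\dots,x_m)$, so $A/bA \simeq \C[x_1,\dots,x_m]/I_0 = A_0$, which is finite-dimensional by hypothesis. In particular $A$ itself is finite-dimensional, being a successive extension of finite-dimensional pieces (one should note $A$ is generated over $A_0$ by the nilpotent $b$, hence Nakayama-type reasoning already bounds $\dim A \le (d+1)\dim A_0$; the content is the reverse inequality, i.e. that no collapse occurs at higher powers of $b$).

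The key step is to produce, for each $j=1,\dots,d$, a surjection $A_0 \twoheadrightarrow b^jA/b^{j+1}A$ of $\C$-vector spaces and argue it is injective. Surjectivity is immediate: $b^jA/b^{j+1}A$ is generated over $A/bA = A_0$ by the image of $b^j$. For injectivity I would use the explicit form of the relations. Multiplication by $b^j$ gives a $\C$-linear map $A_0 \to A$, $\bar p \mapsto b^j p$; I must show its composite with the projection to $b^jA/b^{j+1}A$ has no kernel, equivalently that if $b^j p \in b^{j+1}A$ then $p \in I_0 + (\text{something killed})$. Concretely, $b^j p \in b^{j+1}A + I$ means $b^j p = b^{j+1} q + \sum_s c_s H_s + c_0 b^{d+1}$ in the polynomial ring for suitable $c_s, q$; expanding everything in powers of $b$ and comparing the coefficient of $b^j$, using that each $H_s$ contributes $H_{s0}$ at the bottom plus higher-$b$ terms, one extracts $p \equiv (\text{a combination involving the }H_{s0})\pmod{\text{lower order in }b}$, forcing $\bar p = 0$ in $A_0$. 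This is where the hypothesis that $I$ has exactly the $m+1$ generators $b^{d+1}$ and the $H_s$ — and in particular that $b^{d+1}$ is the \emph{only} relation introducing a pure power of $b$ — is used in an essential way.

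The main obstacle I anticipate is precisely this injectivity bookkeeping: making the coefficient-comparison argument rigorous requires care because the $c_s$ themselves are polynomials in $b$, so the "coefficient of $b^j$" picks up cross terms $c_s^{(i)} H_{s0}$ for various $i$, and one must organize the induction on $j$ so that the terms of $b$-order $> j$ are absorbed into the next filtration step rather than obstructing the argument. An alternative that sidesteps the combinatorics is to invoke flatness: $\C[x_1,\dots,x_m,b]/(b^{d+1})$ is free of rank $d+1$ over $\C[x_1,\dots,x_m]$, and one checks that the sequence $H_1,\dots,H_m$ remains "regular enough" modulo $b^{d+1}$ — but since we only assume $\C[x_1,\dots,x_m]/I_0$ is finite-dimensional (not a complete intersection), the cleanest route is the direct filtration argument above, mirroring Lemma 3.9 in \cite{MTV3}.
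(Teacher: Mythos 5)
The paper states this lemma with the proof omitted, so the comparison is with what a complete argument actually requires. Your filtration of $A=\C[x_1,\dots,x_m,b]/I$ by powers of $b$, and the resulting upper bound $\dim A\le (d+1)\dim A_0$ with $A_0=\C[x_1,\dots,x_m]/I_0$, are fine; but the entire content of the lemma is the injectivity of $A_0\to b^jA/b^{j+1}A$, and that is precisely the step you leave as an "anticipated obstacle", with a sketched mechanism that does not close. Indeed, if $b^jp=b^{j+1}q+\sum_s c_sH_s+c_0b^{d+1}$ and $c_s=\sum_i c_s^{(i)}b^i$, then comparing coefficients of $b^j$ gives $p=\sum_s c_s^{(j)}H_{s0}+\sum_s\sum_{l\ge1}c_s^{(j-l)}H_{sl}$; the first sum lies in $I_0$, but the cross terms $c_s^{(j-l)}H_{sl}$ do not, in general. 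The lower coefficients only say that the vectors $\bigl(c_s^{(i)}\bigr)_s$, $i<j$, are syzygies of $(H_{10},\dots,H_{m0})$, and to absorb the cross terms into $I_0$ one must know that these syzygies are Koszul syzygies, i.e.\ that $H_{10},\dots,H_{m0}$ form (locally) a regular sequence. Without that input the bookkeeping cannot terminate: the conclusion is not a formal consequence of the shape of the generators, and your proposal never supplies the missing ingredient.

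Moreover, your reason for dismissing the regular-sequence/flatness route rests on a misconception: since $I_0$ is generated by exactly $m$ polynomials in $m$ variables and has finite codimension, the $H_{s0}$ automatically form a regular sequence in $\C[x_1,\dots,x_m]_p$ at each point $p$ of the finite support (they are a system of parameters in a Cohen--Macaulay local ring), so the hypothesis \emph{is} a complete-intersection hypothesis in disguise. The same reasoning shows $(H_1,\dots,H_m,b^{d+1})$ is a regular sequence in $\C[x_1,\dots,x_m,b]_{(p,0)}$, hence $b$ is a nonzerodivisor on $M=\C[x_1,\dots,x_m,b]_{(p,0)}/(H_1,\dots,H_m)$. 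Then multiplication by $b^j$ gives $M/bM\simeq b^jM/b^{j+1}M$ for every $j\le d$, so the length of $M/b^{d+1}M$ equals $(d+1)$ times the length of $M/bM\simeq \C[x_1,\dots,x_m]_p/I_0$, and summing over the finitely many support points (all of which lie in $\{b=0\}$ and correspond bijectively to the support of $A_0$) gives $\dim A=(d+1)\dim A_0$. In short, the correct proof is essentially the route you set aside; your direct filtration argument is the same statement but is missing exactly this commutative-algebra input, so as written it has a genuine gap at its key step.
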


\subsection{Isomorphism $\mu_{\bs\la}:\Ol \to \V^S_{\bs\la}$}
\label{sec iso mu}

By Lemma \ref{lem on isotyp decomp}, the space $\V^S_{\bs\la}$ is
a graded free
\linebreak
$\C[\sigma_1,\dots,\sigma_n]$-module.
It has a unique (up to proportionality) vector of degree ${2k-n}$.
Fix such a vector $v_{\bs\la}\in\V^S_{\bs\la}$ and
consider a linear map
\vvn.3>
\be
\mu_{\bs\la}\ :\
\O_{\bs\la}\to\V^S_{\bs\la}\,,\qquad F \mapsto\tau_{\bs\la}(F)\,v_{\bs\la}\,.
\vv.3>
\ee

\begin{thm}
\label{first1}
The map\/ \>$\mu_{\bs\la}:\O_{\bs\la}\to\V^S_{\bs\la}$ is an isomorphism
of graded vector spaces. The maps\/
$\tau_{\bs\la}$ and\/ $\mu_{\bs\la}$ intertwine the action of multiplication
operators on $\O_{\bs\la}$ and the action of the Bethe algebra $\B_{\bs\la}$
on $\V^S_{\bs\la}$, that is, for any $F,G\in\O_{\bs\la}$, we have
\vvn.4>
\beq
\label{mutau}
\mu_{\bs\la}(FG)\,=\,\tau_{\bs\la}(F)\,\mu_{\bs\la}(G)\,.
\vv.3>
\eeq
In other words, the maps\/ $\tau_{\bs\la}$ and\/ $\mu_{\bs\la}$ give
an isomorphism of the regular representation of\/ $\O_{\bs\la}$ and
the\/ $\B_{\bs\la}$-module $\V^S_{\bs\la}$.
\end{thm}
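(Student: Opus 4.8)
Our starting point is that \Ref{mutau} is formal: since $\tau_{\bs\la}$ is a ring homomorphism, $\mu_{\bs\la}(FG)=\tau_{\bs\la}(FG)\,v_{\bs\la}=\tau_{\bs\la}(F)\,\tau_{\bs\la}(G)\,v_{\bs\la}=\tau_{\bs\la}(F)\,\mu_{\bs\la}(G)$; and $\mu_{\bs\la}$ is graded because $\tau_{\bs\la}$ is graded (Theorem \ref{first}) and $v_{\bs\la}$ is homogeneous. Taking $F=\pi_{\bs\la}(\si_s)$ in \Ref{mutau} and using Lemma \ref{symm OK}, $\mu_{\bs\la}$ is in addition a homomorphism of $\C[\si_1,\dots,\si_n]$-modules. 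So the content of the theorem is the bijectivity of $\mu_{\bs\la}$; since $\tau_{\bs\la}$ is already an algebra isomorphism $\O_{\bs\la}\to\B_{\bs\la}$, this is the same as saying that $v_{\bs\la}$ is a cyclic vector for the $\B_{\bs\la}$-module $\V^S_{\bs\la}$ whose annihilator in $\B_{\bs\la}$ is zero.

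To prove bijectivity I would work over $\C[\si_1,\dots,\si_n]$. Both $\O_{\bs\la}$ and $\V^S_{\bs\la}$ are graded free $\C[\si_1,\dots,\si_n]$-modules -- the former because $\O_{\bs\la}\simeq\A_d\otimes\Olo$ (Theorem \ref{thm on algebra II}) and the fibers of the Wronski map have constant dimension (Proposition \ref{prop on f.d.}), the latter by Lemma \ref{lem on isotyp decomp}(iii) -- and their ranks and graded characters agree, by Proposition \ref{prop on f.d.}, \Ref{d0}, Lemma \ref{char OL} and Lemma \ref{lem on isotyp decomp}(iii). Since $\mu_{\bs\la}$ is a graded $\C[\si_1,\dots,\si_n]$-linear map between such modules, it is an isomorphism as soon as it is injective; and since $\V^S_{\bs\la}$ is torsion-free over $\C[\si_1,\dots,\si_n]$, injectivity may be checked after base change to a generic point of $\C^n$, i.e.\ after reduction modulo $I_{\bs a}$ with $\bs a$ in the dense set of Lemma \ref{lem sum of leaves} (real, pairwise distinct roots). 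Such a reduction sends $\V^S_{\bs\la}$ onto the deformed isotypical component $\Mal\subset\Ma$, sends $\O_{\bs\la}$ onto $\Ola$, and sends $v_{\bs\la}$ onto a vector $\bar v_{\bs\la}\in\Mal$; under it $\tau_{\bs\la}$ induces an isomorphism of $\Ola$ onto the Bethe algebra of $\Mal$ (both of dimension $\dim(V^{\otimes n})_{\bs\la}$, by Proposition \ref{prop on f.d.} and Lemma \ref{lem on isotyp decomp M}(i)), so it suffices to prove that $B\mapsto B\bar v_{\bs\la}$ is injective on the Bethe algebra of $\Mal$.

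Here the eigenleaf decomposition does the work. For $\bs a$ as above, $\Mal=\oplus_\phi\Malp$ by Lemma \ref{lem sum of leaves}, and both the Bethe algebra of $\Mal$ and $\bar v_{\bs\la}$ decompose accordingly. By Lemma \ref{lem on leaf}, on each leaf $\Malp$ the Bethe algebra is $\C[\,B_{21}|_{\Malp}\,]$ with $B_{21}|_{\Malp}=e_{21}|_{\Malp}$ nilpotent of exact order $n-2k+1$, i.e.\ a single Jordan block of size $n-2k+1$ on the $(n-2k+1)$-dimensional space $\Malp$; for a single Jordan block the map $B\mapsto Bw$ is injective precisely when $w$ is cyclic, equivalently when $(e_{21}|_{\Malp})^{n-2k}w\neq0$. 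So what remains -- and this is the step I expect to be the real obstacle -- is to verify that the component of $\bar v_{\bs\la}$ in every eigenleaf $\Malp$ is cyclic. I would do this by pinning down $v_{\bs\la}$ via its characterization as the distinguished homogeneous generator of $\V^S_{\bs\la}$ and following it through $\V^S_{\bs\la}\to\Mal\to\Malp$ against the adapted bases of Lemma \ref{lem on isotyp decomp W}(ii), on which the action of $e_{21}$ is transparent. Granting this, $B\mapsto B\bar v_{\bs\la}$ is injective, hence, the two sides having equal dimension, bijective; doing this over the dense set of admissible $\bs a$ and using freeness of $\V^S_{\bs\la}$ over $\C[\si_1,\dots,\si_n]$ yields injectivity -- and, comparing graded dimensions, bijectivity -- of $\mu_{\bs\la}$. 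Everything besides the leafwise cyclicity of $v_{\bs\la}$ is bookkeeping with freeness, graded characters, and the isomorphism $\tau_{\bs\la}$ of Theorem \ref{first}.
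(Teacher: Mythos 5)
Your formal steps (the intertwining identity \Ref{mutau}, gradedness, $\C[\sigma_1,\dots,\sigma_n]$-linearity, and the reduction of bijectivity to injectivity via the equality of the graded characters \Ref{char VS} and \Ref{ch Ol}) are fine, and specializing to a generic fiber and using the eigenleaf decomposition of Lemma \ref{lem sum of leaves} together with the Jordan-block description of Lemma \ref{lem on leaf} is a legitimate strategy. But the step you yourself single out as ``the real obstacle'' and then grant --- that the projection of $\bar v_{\bs\la}$ to every Bethe eigenleaf $\Malp$ is a cyclic vector for the nilpotent $e_{21}|_{\Malp}$ --- is precisely the mathematical content of the theorem, and nothing in your sketch fills it. The characterization of $v_{\bs\la}$ as the distinguished homogeneous vector of $\V^S_{\bs\la}$ and the adapted bases of Lemma \ref{lem on isotyp decomp W} do not control its projections to the individual leaves of a given fiber: a priori such a projection could vanish, or could lie in the image of $e_{21}|_{\Malp}$, and excluding this is exactly the nontrivial nonvanishing statement --- fiberwise it is equivalent to the injectivity you are trying to prove, and globally it is the paper's assertion that $(B_{21})^{n-2k}Hv_{\bs\la}\neq 0$ for every nonzero $H\in\C[\sigma_1,\dots,\sigma_n]$. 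So the proposal reduces the theorem to its crux and stops there.

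For comparison, the paper does not specialize at all: it uses the global nonvanishing just quoted, observes that the kernel of $\mu_{\bs\la}$ is then (via $\tau_{\bs\la}$) an ideal of $\O_{\bs\la}\simeq\A_d\otimes\C[\{f,g\}]$ containing no element of the form $\{b\}^{n-2k}\tilde H$ with $\tilde H$ in the image of the Wronski embedding $\pi_{\bs\la}$, and that any nonzero ideal of $\O_{\bs\la}$ must contain such an element; injectivity follows, and surjectivity comes from the same character count you use. A smaller but real defect in your route: you claim that reduction modulo $I_{\bs a}$ makes $\tau_{\bs\la}$ an isomorphism of $\O_\lba$ onto the Bethe algebra of $\Mal$, ``both of dimension $\dim(V^{\otimes n})_{\bs\la}$''; Lemma \ref{lem on isotyp decomp M}(i) gives that dimension for the module $\Mal$, not for its Bethe algebra, and in the paper the dimension of $\B_\lba$ is only available after Theorem \ref{first1} (via Theorem \ref{second}), so that intermediate claim is circular as stated, even though your argument could be rearranged so as not to rely on it.
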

\begin{proof}
For any nonzero $H\in \C[\sigma_1,\dots,\sigma_n]$, the vector
$(B_{21})^dHv_{\bs\la}$ is a nonzero vector.
Thus, the kernel of $\mu_{\bs\la}$ is an ideal
$I$ in $\B_{\bs\la}$, which does not contain elements of the form
$(B_{21})^dH$. Hence,
$\tau_{\bs\la}^{-1}(I)$ is an ideal in $\Ol$, which
does not contain elements of the form
$\{b\}^d\tilde H$, where $\tilde H\in \pi(\C[\sigma_1,\dots,\sigma_n])$.
It is easy to see that any ideal in $\Ol$, which does not contain elements of
the form $\{b\}^d\tilde H$, is the zero ideal.
This reasoning proves that $\mu_{\bs\la}$ is injective.

The map $\mu_{\bs\la}$ is a graded linear map.
We have the equality of characters,
$\ch_{\V^{S}_{\bs\la}}(q) = \ch_\Ol(q)$, due to
formulae \Ref{char VS} and \Ref{ch Ol}.
Hence, the map $\mu_{\bs\la}$ is surjective.
Formula~\Ref{mutau} follows from Theorem~\ref{first}.
\end{proof}

\subsection{Isomorphism of algebras $\O_\lba$ and $\B_\lba$}
Let $\bs a=(a_1,\dots,a_n) \in \C^n$.
Consider the $\B$-module $\Mal$. Denote $\Bal$ the image of $\B$ in
$\End (\Mal)$.

Let $I^\B_\lba\subset\B_{\bs\la}$ be the ideal generated by
the elements $\si_s(\bs z)-a_s$, $s=1,\dots,n$. Consider the subspace
$\,I^\V_\lba=I^\B_\lba\>\V^S_{\bs\la}$.

\begin{lem}
\label{identify}
We have
\vvn.2>
\be
\tau_{\bs\la}({I^\O_\lba})={I^\B_\lba}\,,\qquad
\mu_{\bs\la}({I^\O_\lba})={I^\V_\lba}\,,\qquad
\B_\lba=\B_{\bs\la}/{I^\B_\lba}\,,\qquad
\M_\lba=\V^S_{\bs\la}/{I^\V_\lba}\,.
\vv.4>
\ee
\end{lem}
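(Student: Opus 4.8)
The plan is to establish the four equalities in turn, using throughout the isomorphisms $\tau_{\bs\la}\colon\Ol\to\Bl$ and $\mu_{\bs\la}\colon\Ol\to\Vl$ of Theorems~\ref{first} and~\ref{first1}. For $\tau_{\bs\la}(I^\O_\lba)=I^\B_\lba$: by definition $I^\O_\lba$ is the ideal of $\Ol$ generated by $\pi_{\bs\la}(\si_s)-a_s$, $s=1,\dots,n$, and $I^\B_\lba$ is the ideal of $\Bl$ generated by the operators of multiplication by $\si_s(\bs z)-a_s$. Since $\tau_{\bs\la}$ is an algebra isomorphism and, by Lemma~\ref{symm OK}, $\tau_{\bs\la}(\pi_{\bs\la}(\si_s))$ is precisely multiplication by $\si_s(\bs z)$, the map $\tau_{\bs\la}$ carries the first generating set onto the second, giving $\tau_{\bs\la}(I^\O_\lba)=I^\B_\lba$.

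For $\mu_{\bs\la}(I^\O_\lba)=I^\V_\lba$ I would use the intertwining relation~\Ref{mutau} and surjectivity of $\mu_{\bs\la}$. Writing an element of $I^\O_\lba$ as $\sum_s(\pi_{\bs\la}(\si_s)-a_s)\,G_s$ with $G_s\in\Ol$, relation~\Ref{mutau} and Lemma~\ref{symm OK} give $\mu_{\bs\la}\bigl(\sum_s(\pi_{\bs\la}(\si_s)-a_s)G_s\bigr)=\sum_s(\si_s(\bs z)-a_s)\,\mu_{\bs\la}(G_s)$; since $\mu_{\bs\la}$ is onto, the $\mu_{\bs\la}(G_s)$ run over all of $\Vl$, so $\mu_{\bs\la}(I^\O_\lba)=\sum_s(\si_s(\bs z)-a_s)\,\Vl$. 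As $\Bl$ is commutative and unital, $I^\B_\lba=\sum_s(\si_s(\bs z)-a_s)\,\Bl$ and $\Bl\,\Vl=\Vl$, so this last sum is $I^\B_\lba\,\Vl=I^\V_\lba$.

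Next I would prove $\M_\lba=\Vl/I^\V_\lba$. The submodule $I^\V_{\bs a}\subset\V^S$ is $\gltt$-stable, hence stable under $B_{22}\in\B$, so it is compatible with the decomposition~\Ref{dir dec} of $\V^S$ into the generalized eigenspaces $\V^S_{\bs\mu}$ of $B_{22}$. Because for $\bs\mu=(n-k,k)$ with $0\le k\le n/2$ the eigenvalues $k(n-k+1)$ are pairwise distinct (equality of two of them forces $(k-k')(n-k-k'+1)=0$, hence $k=k'$), the canonical map $\V^S\to\Ma$ restricts to a surjection $\Vl\to\M_\lba$ with kernel $I^\V_{\bs a}\cap\Vl$, which plainly contains $I^\V_\lba=\sum_s(\si_s(\bs z)-a_s)\Vl$. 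To see equality I would compare dimensions: $\dim\M_\lba=\dim(V^{\otimes n})_{\bs\la}$ by Lemma~\ref{lem on isotyp decomp M} together with Lemmas~\ref{factor=weyl} and~\ref{weyl}, while by the previous step $\Vl/I^\V_\lba\cong\Ol/I^\O_\lba=\O_\lba$ linearly, of dimension $\dim(V^{\otimes n})_{\bs\la}$ by Proposition~\ref{prop on f.d.}; a surjection $\Vl/I^\V_\lba\to\M_\lba$ of finite-dimensional spaces of equal dimension is an isomorphism, so $I^\V_{\bs a}\cap\Vl=I^\V_\lba$. (Alternatively, the $S_n$-averaging operator commutes with multiplication by the symmetric polynomials $\si_s(\bs z)-a_s$, which shows directly that $I^\V_{\bs a}=(V^{\otimes n}\otimes I_{\bs a})\cap\V^S=\sum_s(\si_s(\bs z)-a_s)\V^S$; intersecting with $\Vl$ then gives the claim.)

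Finally, $\B_\lba=\Bl/I^\B_\lba$. By the previous step $\M_\lba$ is a quotient $\B$-module of $\Vl$, so $\B_\lba$ is the image of $\Bl$ in $\End(\M_\lba)$. By Theorems~\ref{first} and~\ref{first1} the $\Bl$-module $\Vl$ is, via $\tau_{\bs\la}$ and $\mu_{\bs\la}$, the regular representation of $\Ol$, and the submodule $I^\V_\lba$ corresponds to the ideal $I^\O_\lba$ by the second equality; hence $\M_\lba\cong\O_\lba$ as a module over $\Ol\cong\Bl$. The regular representation of the unital algebra $\O_\lba$ is faithful, so the image of $\Ol$ in $\End(\O_\lba)$ is $\O_\lba=\Ol/I^\O_\lba$; transporting along $\tau_{\bs\la}$ and using $\tau_{\bs\la}(I^\O_\lba)=I^\B_\lba$ yields $\B_\lba=\Bl/I^\B_\lba$. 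The step needing most care is $\M_\lba=\Vl/I^\V_\lba$: one must check that $I^\V_{\bs a}$ splits along the $B_{22}$-eigenspaces (this rests on distinctness of the eigenvalues $k(n-k+1)$) and, above all, that the evident inclusion $I^\V_\lba\subseteq I^\V_{\bs a}\cap\Vl$ is an equality.
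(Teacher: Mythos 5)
Your argument is correct and follows essentially the same route as the paper, whose own proof is just a citation of Theorems \ref{first}, \ref{first1} and Lemmas \ref{symm OK}, \ref{factor=weyl}; you have filled in exactly the details those citations are meant to supply. In particular, your handling of the one delicate point, the equality $I^\V_{\bs a}\cap\V^S_{\bs\la}=I^\V_\lba$ (via the separation of the $B_{22}$-eigenvalues $k(n-k+1)$ together with the dimension count from Proposition \ref{prop on f.d.}, or alternatively the symmetrization argument), is sound.
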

\begin{proof}
The lemma follows from Theorems~\ref{first}, \ref{first1} and
Lemmas~\ref{symm OK}, \ref{factor=weyl}.
\end{proof}

By Lemma~\ref{identify},
the maps $\tau_{\bs\la}$ and $\mu_{\bs\la}$ induce the maps
\beq
\label{taumu}
\tau_\lba:\O_\lba\to\B_\lba\,,\qquad
\mu_\lba:\O_\lba\to\M_\lba\,.
\eeq

\begin{thm}
\label{second}
The map $\tau_\lba$ is an isomorphism of algebras. The map $\mu_\lba$ is
an isomorphism of vector spaces. The maps\/ $\tau_\lba$ and\/ $\mu_\lba$
intertwine the action of multiplication operators on $\O_\lba$ and the action
of the Bethe algebra $\B_\lba$ on $\M_\lba$, that is, for any $F,G\in\O_\lba$,
we have
\vvn-.2>
\be
\mu_\lba(FG)\,=\,\tau_\lba(F)\,\mu_\lba(G)\,.
\vv.3>
\ee
In other words, the maps\/ $\tau_\lba$ and\/ $\mu_\lba$ give an isomorphism of
the regular representation of\/ $\O_\lba$ and the\/ $\B_\lba$-module $\M_\lba$.
\end{thm}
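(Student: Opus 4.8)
The plan is to derive Theorem~\ref{second} from Theorems~\ref{first} and \ref{first1} by passing to the quotients by the ideals $I^\O_\lba$, $I^\B_\lba$, $I^\V_\lba$, using Lemma~\ref{identify} to supply all the needed compatibilities. In fact the two induced maps in \Ref{taumu} are well defined precisely because of the equalities $\tau_{\bs\la}(I^\O_\lba)=I^\B_\lba$ and $\mu_{\bs\la}(I^\O_\lba)=I^\V_\lba$ from Lemma~\ref{identify}, so the only issue is to see that these induced maps inherit the structure asserted in Theorems~\ref{first} and \ref{first1}.

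First I would handle $\tau_\lba$. By Theorem~\ref{first} the map $\tau_{\bs\la}:\O_{\bs\la}\to\B_{\bs\la}$ is an algebra isomorphism, and by Lemma~\ref{identify} it carries the ideal $I^\O_\lba$ bijectively onto the ideal $I^\B_\lba$. A bijective algebra homomorphism sending an ideal onto an ideal induces a bijective algebra homomorphism of the corresponding quotient algebras; since $\B_\lba=\B_{\bs\la}/I^\B_\lba$ by Lemma~\ref{identify}, this shows that $\tau_\lba:\O_\lba\to\B_\lba$ is an isomorphism of algebras.

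Next, for $\mu_\lba$: by Theorem~\ref{first1} the map $\mu_{\bs\la}:\O_{\bs\la}\to\V^S_{\bs\la}$ is a linear isomorphism, and by Lemma~\ref{identify} it maps the subspace $I^\O_\lba$ onto $I^\V_\lba$; hence it descends to a linear isomorphism $\O_\lba\to\V^S_{\bs\la}/I^\V_\lba=\M_\lba$, which is exactly $\mu_\lba$. For the intertwining relation I would lift: given $F,G\in\O_\lba$, pick representatives $\tilde F,\tilde G\in\O_{\bs\la}$; formula~\Ref{mutau} of Theorem~\ref{first1} gives $\mu_{\bs\la}(\tilde F\tilde G)=\tau_{\bs\la}(\tilde F)\,\mu_{\bs\la}(\tilde G)$ in $\V^S_{\bs\la}$, and projecting onto $\M_\lba$ yields $\mu_\lba(FG)=\tau_\lba(F)\,\mu_\lba(G)$; this is independent of the choice of lifts because $\mu_{\bs\la}$ and $\tau_{\bs\la}$ respect the relevant ideals. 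The final assertion of the theorem is just a restatement of this relation together with the two isomorphism statements.

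I do not expect a genuine obstacle in this argument: the substantive content was already established in Theorem~\ref{first} (injectivity and surjectivity of $\tau_{\bs\la}$, via Bethe eigenleaves, special homomorphisms, and Lemma~\ref{lem on pairs of polynomials and leaves}) and in Lemma~\ref{identify}. The only point deserving care is confirming that the three ideals genuinely correspond as stated, which is exactly what Lemma~\ref{identify} provides; as a sanity check one may verify that $\dim\O_\lba=\dim(V^{\otimes n})_{\bs\la}$ from Proposition~\ref{prop on f.d.} agrees with the dimension of $\M_\lba$ obtained from Lemmas~\ref{lem on isotyp decomp M} and \ref{factor=weyl}.
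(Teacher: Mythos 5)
Your proposal is correct and follows exactly the paper's route: the paper proves Theorem \ref{second} by citing Theorems \ref{first}, \ref{first1} and Lemma \ref{identify}, and your write-up merely spells out the routine passage to quotients that this citation leaves implicit. No discrepancies to report.
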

\begin{proof}
The theorem follows from Theorems~\ref{first}, \ref{first1}
and Lemma~\ref{identify}.
\end{proof}

\begin{rem}
By Lemma~\ref{local Wr}, the algebra $\O_\lba$ is Frobenius.
Therefore, its regular and coregular representations are isomorphic.
\end{rem}

\section{Comparison of actions of $\B$ and $\B^0$ on $\V^S$}
\label{sec comparison}

\subsection{Isomorphism $\nu_{\bs\la} : \A_d\otimes \B^0_{\bs\la} \to
\B_{\bs\la}$}
\label{isom nu}

\begin{lem}
\label{lem on ker p}

Consider the principal ideal $\langle \hat B_{21}\rangle \subset
\B_{\bs\la}$ and the graded algebra
epimorphism $p^\B_{\bs\la}:\B_{\bs\la}\to\B_{\bs\la}^0$, defined in \Ref{p-la-b}.
Then
$\langle \hat B_{21}\rangle = {\rm ker}\,p^\B_{\bs\la}$.

\end{lem}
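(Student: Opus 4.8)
The plan is to transport the statement through the isomorphism $\tau_{\bs\la}\colon\Ol\to\B_{\bs\la}$ of Theorem~\ref{first}, under which $\hat B_{21}=\tau_{\bs\la}(\{b\})$ by~\Ref{F21=e21}, and thereby reduce the lemma to the elementary fact that $\langle\{b\}\rangle$ is the kernel of the projection $p^\O_{\bs\la}\colon\Ol\to\Olo$ defined in~\Ref{pOl}.

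First I would establish the commutative square
\[
\begin{CD}
\Ol @>{\tau_{\bs\la}}>> \B_{\bs\la}\\
@V{p^\O_{\bs\la}}VV @VV{p^\B_{\bs\la}}V\\
\Olo @>{\tau^0_{\bs\la}}>> \B^0_{\bs\la}
\end{CD}
\]
where $\tau^0_{\bs\la}\colon\Olo\to\B^0_{\bs\la}$ is the isomorphism of~\cite{MTV3} (the $\bs K=0$ analogue of Theorem~\ref{first}), which sends, for each $(i,j)$, the generator $F^0_{ij}$ of $\Olo$ to the image of $B^0_{ij}$ in $\B^0_{\bs\la}$. All four maps are algebra homomorphisms and, by Lemma~\ref{lem on coeff}, $\Ol$ is generated by the elements $F_{ij}$, so it is enough to verify commutativity on each $F_{ij}$. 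Going right and then down gives $\tau_{\bs\la}(F_{ij})=\hat B_{ij}$ by the definition of $\tau_{\bs\la}$, and $p^\B_{\bs\la}(\hat B_{ij})$ is the image of $B^0_{ij}$ in $\B^0_{\bs\la}$ by Lemma~\ref{lem on p-la}(ii). Going down and then right, the ring homomorphism $p^\O_{\bs\la}$ sends $\{f\}(u),\{g\}(u)$ to the polynomials defining $\Olo$ (it sets $\{b\}=0$), hence carries $\D_\Ol$ to the corresponding universal differential operator over $\Olo$, hence carries its Laurent coefficients $F_{ij}$ to $F^0_{ij}$; applying $\tau^0_{\bs\la}$ then yields again the image of $B^0_{ij}$. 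So the square commutes.

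Given the square, the lemma follows at once: since $\tau_{\bs\la}$ and $\tau^0_{\bs\la}$ are isomorphisms, $\ker p^\B_{\bs\la}=\tau_{\bs\la}(\ker p^\O_{\bs\la})$. By Corollary~\ref{cor on algebra II} the isomorphism $q_{\bs\la}\colon\A_d\otimes\Olo\to\Ol$ identifies $p^\O_{\bs\la}$ with the augmentation $\A_d\otimes\Olo\to\Olo$, whose kernel is the principal ideal $\langle b\rangle$; thus $\ker p^\O_{\bs\la}=\langle\{b\}\rangle$, and therefore $\ker p^\B_{\bs\la}=\tau_{\bs\la}(\langle\{b\}\rangle)=\langle\tau_{\bs\la}(\{b\})\rangle=\langle\hat B_{21}\rangle$ by~\Ref{F21=e21}. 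The only step with any real content is the commutativity of the square, and inside it the identity $p^\B_{\bs\la}(\hat B_{ij})=$ (image of $B^0_{ij}$), which is exactly Lemma~\ref{lem on p-la}(ii); the remaining manipulations are bookkeeping with isomorphisms already at hand. Equivalently, one may observe directly that $p^\B_{\bs\la}$ kills $\hat B_{21}$, so it descends to a map $\B_{\bs\la}/\langle\hat B_{21}\rangle\simeq\Ol/\langle\{b\}\rangle\simeq\Olo$, and this descended map agrees on the generators $F^0_{ij}$ with the isomorphism $\Olo\to\B^0_{\bs\la}$ of~\cite{MTV3}, hence is itself an isomorphism.
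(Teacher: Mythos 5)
Your argument is correct, but it reaches the equality $\langle\hat B_{21}\rangle=\ker p^\B_{\bs\la}$ by a different mechanism than the paper. The paper records only the easy inclusion $\langle \hat B_{21}\rangle\subset\ker p^\B_{\bs\la}$ and then forces equality by a graded dimension count: via the isomorphism $\tau_{\bs\la}$ of Theorem \ref{first} and $\ker p^\O_{\bs\la}=\langle\{b\}\rangle$, the quotient $\B_{\bs\la}/\langle\hat B_{21}\rangle$ is graded-isomorphic to $\Olo$, whose graded character agrees with that of $\B^0_{\bs\la}$ by \Ref{char B^0_{la}} and \Ref{ch Ol}, so the graded surjection onto $\B^0_{\bs\la}=\B_{\bs\la}/\ker p^\B_{\bs\la}$ must be injective. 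You instead build the commuting square $p^\B_{\bs\la}\circ\tau_{\bs\la}=\tau^0_{\bs\la}\circ p^\O_{\bs\la}$, taking as input the isomorphism $\tau^0_{\bs\la}:\Olo\to\B^0_{\bs\la}$ of Theorem 5.3 of \cite{MTV3} with its generator-level description, and transport kernels; no character computation is needed, and as a by-product you obtain Corollary \ref{lem on isom tau0} and the identification in the Remark after it at the same time, whereas the paper deduces these from the lemma (using \cite{MTV3} only through the character formula, so the two proofs simply draw on different results of \cite{MTV3}, and neither is circular). The one step you pass over a bit quickly is the parenthetical ``it sets $\{b\}=0$'': the elements $\{\tilde f_{k+i}b^i\}$ and $\{\tilde g_{k+d+1+i}b^i\}$ are generators of $\Ol$, not visibly multiples of $\{b\}$ inside $\mc C_{k,d}$, so to conclude that $p^\O_{\bs\la}(\{f\}(u))$ and $p^\O_{\bs\la}(\{g\}(u))$ are the undeformed monic polynomials (hence $p^\O_{\bs\la}(F_{ij})=F^0_{ij}$) one should note that $p^\O_{\bs\la}$ is graded, these elements have negative degree $-i$, and $\Olo$ has no nonzero elements of negative degree (equivalently, every monomial of $\tilde\phi_{k+i}$, $\tilde\psi_{k+d+1+i}$ in \Ref{main eqns II add} is divisible by $b$). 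With that remark supplied, the rest of your bookkeeping---agreement of the two composites on the generators $F_{ij}$ via Lemma \ref{lem on coeff} and Lemma \ref{lem on p-la}(ii), and $\ker p^\O_{\bs\la}=\langle\{b\}\rangle$ via $q_{\bs\la}$ and Corollary \ref{cor on algebra II}---is sound, and your closing observation about the descended map is just a reformulation of the same square.
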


\begin{proof}
Clearly,
we have
$\langle \hat B_{21}\rangle \subset {\rm ker}\,p^\B_{\bs\la}$.
Consider the commutative diagram of algebra homomorphisms,
\vvn-.4>
\beq
\label{2nd diagram}
\begin{CD}
\A_d @> i^\O_{\bs\la}
>> \Ol
@>p_{\bs\la}^\O >> \Olo
\\
{\rm id}@VVV \tau_{\bs\la} @VVV
\\
\A_d @>i^\B_{\bs\la} >> \B_{\bs\la} @> p^\B_{\bs\la}>> \B^0_{\bs\la}
\end{CD} \phantom{aa} .
\eeq
We have ${\rm ker}\,p^\O_{\bs\la} = \langle \{b\}\rangle$.
The graded characters of $\Olo$ and $\B^0_{\bs\la}$ are equal due to
\Ref{char B^0_{la}}, \Ref{ch Ol}.
Hence
$\langle \hat B_{21}\rangle = {\rm ker}\,p^\B_{\bs\la}$.
\end{proof}

\begin{cor}
\label{lem on isom tau0}
The isomorphism $\tau_{\bs\la}$ induces an isomorphism
\vvn.3>
\be
\tau_{\bs\la}^0\ :\ \Olo\ \to\ \B^0_{\bs\la}\ .
\ee
\end{cor}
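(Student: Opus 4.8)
The plan is to obtain the corollary directly from Theorem \ref{first} and Lemma \ref{lem on ker p}, using the commutative diagram \Ref{2nd diagram} together with the elementary fact that an algebra isomorphism carrying one principal ideal onto another descends to an isomorphism of the quotients by those ideals.

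First I would record that $\tau_{\bs\la}:\Ol\to\B_{\bs\la}$ is an isomorphism of algebras by Theorem \ref{first}, and that by \Ref{F21=e21} it sends $\{b\}$ to $\hat B_{21}$; hence it restricts to a bijection of the principal ideal $\langle\{b\}\rangle\subset\Ol$ onto the principal ideal $\langle\hat B_{21}\rangle\subset\B_{\bs\la}$. Next I would identify both ideals with kernels of the projections in the top and bottom rows of \Ref{2nd diagram}: by the definition \Ref{pOl} of $p^\O_{\bs\la}$ we have $\ker p^\O_{\bs\la}=\langle\{b\}\rangle$, and by Lemma \ref{lem on ker p} we have $\ker p^\B_{\bs\la}=\langle\hat B_{21}\rangle$. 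Combining these gives $\tau_{\bs\la}(\ker p^\O_{\bs\la})=\ker p^\B_{\bs\la}$, so $\tau_{\bs\la}$ passes to the quotients and yields an isomorphism $\tau_{\bs\la}^0:\Olo=\Ol/\langle\{b\}\rangle\ \to\ \B_{\bs\la}/\langle\hat B_{21}\rangle=\B^0_{\bs\la}$, which is exactly the map completing the right square of \Ref{2nd diagram}, i.e. $\tau_{\bs\la}^0\circ p^\O_{\bs\la}=p^\B_{\bs\la}\circ\tau_{\bs\la}$. Gradedness of $\tau_{\bs\la}^0$ is inherited from that of $\tau_{\bs\la}$, $p^\O_{\bs\la}$ and $p^\B_{\bs\la}$.

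Since every ingredient is already in place, I do not anticipate a genuine obstacle; the only point needing care is the logical ordering — Lemma \ref{lem on ker p}, whose proof rests on the character identities \Ref{char B^0_{la}} and \Ref{ch Ol}, must precede this corollary, which is why the statement is phrased through the diagram \Ref{2nd diagram}. Should one wish to avoid citing Lemma \ref{lem on ker p}, the same conclusion follows by a direct character count: $p^\B_{\bs\la}$ is surjective, its kernel visibly contains $\langle\hat B_{21}\rangle=\tau_{\bs\la}(\langle\{b\}\rangle)$, and comparing $\ch_{\Olo}$ with $\ch_{\B^0_{\bs\la}}$ (equal by \Ref{ch Ol} and \Ref{char B^0_{la}}) forces this containment to be an equality, whence $\tau_{\bs\la}^0$ is an isomorphism.
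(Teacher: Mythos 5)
Your argument is correct and is essentially the paper's own: the corollary is stated as an immediate consequence of Lemma \ref{lem on ker p}, and the intended reasoning is exactly what you wrote, namely that $\tau_{\bs\la}$ (Theorem \ref{first}) carries $\ker p^\O_{\bs\la}=\langle\{b\}\rangle$ onto $\ker p^\B_{\bs\la}=\langle\hat B_{21}\rangle$ via \Ref{F21=e21} and hence descends to an isomorphism of the quotients in diagram \Ref{2nd diagram}. Your remark on the logical ordering (Lemma \ref{lem on ker p}, resting on the character identities \Ref{char B^0_{la}} and \Ref{ch Ol}, must come first) matches the paper's structure as well.
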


\medskip
\noindent
{\bf Remark.} The isomorphism $\tau_{\bs\la}^0 : \Olo \to \B^0_{\bs\la}$
is the isomorphism denoted $\tau_{\bs\la}$ in Theorem 5.3 of \cite{MTV3}.

Denote
\vvn-.7>
\be
r^\O_{\bs\la} \ :\ \A_d \otimes \Olo\ \to\ \Olo
\vv.3>
\ee
the algebra epimorphism such that
$ b\otimes x \mapsto 0, \ 1\otimes x \mapsto x$ for any $x\in \Olo$.
Denote
\vvn.3>
\be
r^\B_{\bs\la}\ :\ \A_d \otimes \B^0_{\bs\la} \ \to\ \B^0_{\bs\la}
\vv.3>
\ee
the algebra epimorphism such that
$ b\otimes x \mapsto 0, \ 1\otimes x \mapsto x$ for any $x\in \B^0_{\bs\la}$.

\begin{thm}
\label{thm on 3d diagram}
The following diagram
is commutative,
\beq
\label{3d diagram}
\begin{CD}
\A_d @> {\rm id}\otimes 1
>>
\A_d\otimes \B^0_{\bs\la} @>r^\B_{\bs\la} >> \B^0_{\bs\la}
\\
{\rm id}@VVV \nu_{\bs\la}
@VVV {\rm id}@VVV
\\
\A_d @>i^\B_{\bs\la} >>
\B_{\bs\la} @> p^\B_{\bs\la}>> \B^0_{\bs\la}
\end{CD} ,
\eeq
where $\nu_{\bs\la}$ is the isomorphism defined by the formula
$\nu_{\bs\la}\, = \,
\tau_{\bs\la}\,q_{\bs\la}\,({\rm id}\otimes (\tau_{\bs\la}^0)^{-1})$.
\end{thm}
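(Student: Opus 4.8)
**

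The plan is to verify the commutativity of diagram~\Ref{3d diagram} square by square, using the definition $\nu_{\bs\la} = \tau_{\bs\la}\,q_{\bs\la}\,({\rm id}\otimes(\tau_{\bs\la}^0)^{-1})$ together with the already-established commutativity of diagram~\Ref{2nd diagram}. First I note that, by Theorem~\ref{first}, $\tau_{\bs\la}:\Ol\to\B_{\bs\la}$ is an isomorphism; by Corollary~\ref{lem on isom tau0}, $\tau_{\bs\la}^0:\Olo\to\B^0_{\bs\la}$ is an isomorphism; and by the isomorphism $q_{\bs\la}:\A_d\otimes\Olo\to\Ol$ of~\Ref{iso formula}, the composite $\nu_{\bs\la}$ is a well-defined algebra isomorphism $\A_d\otimes\B^0_{\bs\la}\to\B_{\bs\la}$. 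So the content of the theorem is purely the commutativity of the two squares.

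For the left square, I must check $i^\B_{\bs\la} = \nu_{\bs\la}\,({\rm id}\otimes 1)$, i.e. that $\nu_{\bs\la}$ sends $b\otimes 1 \mapsto \hat B_{21}$, or more precisely $b^j\otimes 1 \mapsto (\hat B_{21})^j$. Unwinding: $({\rm id}\otimes(\tau_{\bs\la}^0)^{-1})$ fixes $b\otimes 1$; then $q_{\bs\la}$ sends $b\otimes 1$ to $\{b\}=i^\O_{\bs\la}(b)$ by the definition in~\Ref{iso formula}; and finally $\tau_{\bs\la}$ sends $\{b\}=F_{21}$ to $\hat B_{21}=e_{21}|_{\Vl}$ by~\Ref{F21=e21}. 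Composing, $b\otimes 1\mapsto \hat B_{21}=i^\B_{\bs\la}(b)$, which is exactly the content of the left square. This step is essentially the chain of three definitions and should be routine.

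For the right square, I must check $p^\B_{\bs\la}\,\nu_{\bs\la} = \tau_{\bs\la}^0\,r^\B_{\bs\la}$. Here is where I would lean on the already-proven square~\Ref{2nd diagram}, which says $p^\B_{\bs\la}\,\tau_{\bs\la} = \tau_{\bs\la}^0\,p^\O_{\bs\la}$. Substituting the definition of $\nu_{\bs\la}$ gives $p^\B_{\bs\la}\,\nu_{\bs\la} = p^\B_{\bs\la}\,\tau_{\bs\la}\,q_{\bs\la}\,({\rm id}\otimes(\tau_{\bs\la}^0)^{-1}) = \tau_{\bs\la}^0\,p^\O_{\bs\la}\,q_{\bs\la}\,({\rm id}\otimes(\tau_{\bs\la}^0)^{-1})$. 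So it remains to show $p^\O_{\bs\la}\,q_{\bs\la}\,({\rm id}\otimes(\tau_{\bs\la}^0)^{-1}) = r^\B_{\bs\la}$ as maps $\A_d\otimes\B^0_{\bs\la}\to\Olo$, i.e. (after composing with the isomorphism $\tau_{\bs\la}^0$ on the right, which I have already pulled out) that $p^\O_{\bs\la}\,q_{\bs\la} = r^\O_{\bs\la}$ as maps $\A_d\otimes\Olo\to\Olo$. But this is immediate from the definitions: $q_{\bs\la}$ sends $f_i\mapsto\{f_i\}$, $g_i\mapsto\{g_i\}$, $b\mapsto\{b\}$, while $p^\O_{\bs\la}$ (defined in~\Ref{pOl}) sends $\{b\}\mapsto 0$, $\{f_i\}\mapsto f_i$, $\{g_i\}\mapsto g_i$; hence $p^\O_{\bs\la}\,q_{\bs\la}$ sends $1\otimes x\mapsto x$ and $b\otimes x\mapsto 0$, which is precisely $r^\O_{\bs\la}$. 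One then checks $\tau_{\bs\la}^0\,r^\O_{\bs\la}\,({\rm id}\otimes(\tau_{\bs\la}^0)^{-1}) = r^\B_{\bs\la}$, again directly from the definitions of these two "project away $b$" epimorphisms, since $\tau_{\bs\la}^0$ identifies $\Olo$ with $\B^0_{\bs\la}$ compatibly. Assembling, the right square commutes.

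The only real subtlety — and the step I would treat most carefully — is bookkeeping the identifications: making sure that the $\tau_{\bs\la}^0$ appearing implicitly inside $\nu_{\bs\la}$ (through $({\rm id}\otimes(\tau_{\bs\la}^0)^{-1})$) is the same map as the vertical arrow labeled $\tau_{\bs\la}^0$ on the right edge of~\Ref{3d diagram}, and that it is indeed the restriction of $\tau_{\bs\la}$ to the quotient as established in Corollary~\ref{lem on isom tau0} and square~\Ref{2nd diagram}. Once that identification is pinned down, the proof is just the two diagram chases above, each of which reduces to unwinding the generator-level definitions of $q_{\bs\la}$, $i^\O_{\bs\la}$, $p^\O_{\bs\la}$, $r^\O_{\bs\la}$ and their counterparts. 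I do not anticipate any genuine obstacle beyond this notational care, since all the hard analytic and algebraic input (the isomorphism theorems for $\tau_{\bs\la}$, $\tau_{\bs\la}^0$, $q_{\bs\la}$ and the commutativity of~\Ref{2nd diagram}) is already in hand.
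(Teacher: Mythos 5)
Your proposal is correct and follows essentially the paper's own route: the paper proves the theorem by stacking the elementary commutative squares built from the generator-level definitions of $q_{\bs\la}$, $i^\O_{\bs\la}$, $p^\O_{\bs\la}$, $r^\O_{\bs\la}$, $\tau_{\bs\la}$, $\tau^0_{\bs\la}$ (the bottom layer being diagram~\Ref{2nd diagram}), which is exactly the chase you carry out square by square. No gaps; the identification of the induced map $\tau^0_{\bs\la}$ that you flag is indeed the only point requiring care, and it is supplied by Lemma~\ref{lem on ker p} and Corollary~\ref{lem on isom tau0}.
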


\begin{proof}
The theorem follows from the commutativity of the following diagram:
\be
\label{4th diagram}
\begin{CD}
\A_d
@> {\rm id}\otimes 1 >>
\A_d\otimes \B^0_{\bs\la}
@>r^\B_{\bs\la} >>
\B^0_{\bs\la}
\\
{\rm id}@VVV
{\rm id} \otimes (\tau_{\bs\la}^0)^{-1}
@VVV (\tau_{\bs\la}^0)^{-1} @VVV
\\
\A_d @> {\rm id}\otimes 1 >>
\A_d\otimes \Olo
@> r^\O_{\bs\la}>> \Olo
\\
{\rm id}@VVV
q_{\bs\la}
@VVV {\rm id}
@VVV
\\
\A_d @> i^\O_{\bs\la}
>>
\Ol
@> p^\O_{\bs\la}>> \Olo
\\
{\rm id}@VVV
\tau_{\bs\la}
@VVV
\tau_{\bs\la}^0
@VVV
\\
\A_d @> i^\B_{\bs\la}
>>
\Bl
@> p^\B_{\bs\la}>> \B^0_{\bs\la}
\end{CD}\ {} \ .
\ee
\end{proof}

\subsection{$\A_d\otimes \B^0_{\bs\la}$-module $\V^{S,0}_{\bs\la}$}
\label{A-B-module V{S,O}}

By Lemma \ref{lem on char of VS}, the space $\sing\,\V^{S,0}_{\bs\la}$ is
a graded free
$\C[\sigma_1,\dots,\sigma_n]$-module.
It has a unique (up to proportionality) vector of degree ${2k-n}$.
Fix such a vector $v^0_{\bs\la}\in\,\sing\,\V^{S,0}_{\bs\la}$ and
consider a linear map
\bea
\mu^0_{\bs\la}\ :\
\O^0_{\bs\la}\ \to\
\sing \V^{S,0}_{\bs\la}\,,\qquad F \mapsto
\tau^0_{\bs\la}(F)\,v^0_{\bs\la}\,.
\eea

\begin{thm}[Theorem 5.6 of \cite{MTV3}]
\label{first2}
The map $\mu^0_{\bs\la}$
is an isomorphism
of graded vector spaces. The maps\/
$\tau^0_{\bs\la}$ and\/ $\mu^0_{\bs\la}$ intertwine the action of multiplication
operators on $\O^0_{\bs\la}$ and the action of the Bethe algebra $\B^0_{\bs\la}$
on $\sing \V^S_{\bs\la}$, that is, for any $F,G\in\O^0_{\bs\la}$, we have
\vvn.3>
\be
\mu^0_{\bs\la}(FG)\,=\,\tau^0_{\bs\la}(F)\,\mu^0_{\bs\la}(G)\,.
\vv.3>
\ee
In other words, the maps\/ $\tau^0_{\bs\la}$ and\/ $\mu^0_{\bs\la}$ give
an isomorphism of the regular representation of\/ $\O^0_{\bs\la}$ and
the\/ $\B^0_{\bs\la}$-module $\sing \V^{S,0}_{\bs\la}$.
\end{thm}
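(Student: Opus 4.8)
The plan is to derive Theorem \ref{first2} --- which is Theorem 5.6 of \cite{MTV3} --- inside the present setup as the $\bs K=0$ reduction of Theorems \ref{first} and \ref{first1}, obtained by passing to the quotient modulo $\hat B_{21}$. First recall the pieces already in place: by Corollary \ref{lem on isom tau0} and the commutative diagram \Ref{2nd diagram}, the algebra isomorphism $\tau^0_{\bs\la}:\Olo\to\B^0_{\bs\la}$ is the map induced by $\tau_{\bs\la}:\Ol\to\B_{\bs\la}$ on the quotients by $\ker p^\O_{\bs\la}=\langle\{b\}\rangle$ and $\ker p^\B_{\bs\la}=\langle\hat B_{21}\rangle$ (Lemma \ref{lem on ker p}); and by Lemma \ref{lem on p-la}, $\sing\,\V^{S,0}_{\bs\la}$ carries the $\B^0_{\bs\la}$-module structure for which it equals $\V^S_{\bs\la}/\ker p^\V_{\bs\la}$ with $\B$ acting through $p^\B_{\bs\la}$. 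Since $\tau^0_{\bs\la}$ is a homomorphism the intertwining identity $\mu^0_{\bs\la}(FG)=\tau^0_{\bs\la}(F)\,\mu^0_{\bs\la}(G)$ is automatic, so the entire content is that $\mu^0_{\bs\la}$ is a graded linear isomorphism.

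The single nontrivial step is the identification $\ker p^\V_{\bs\la}=\hat B_{21}\,\V^S_{\bs\la}$. The inclusion $\hat B_{21}\,\V^S_{\bs\la}\subseteq\ker p^\V_{\bs\la}$ is immediate, because $p^\V_{\bs\la}$ is $\B$-linear through $p^\B_{\bs\la}$ and $p^\B_{\bs\la}(\hat B_{21})=0$ (equivalently $B^0_{21}=0$; see also Lemma \ref{lem on ker p}). For the reverse inclusion I would argue by a graded-character count. By Theorem \ref{first1} the map $\mu_{\bs\la}$ is a $\B$-module isomorphism $\Ol\to\V^S_{\bs\la}$, so \Ref{mutau} gives $\mu_{\bs\la}(\langle\{b\}\rangle)=\hat B_{21}\,\mu_{\bs\la}(\Ol)=\hat B_{21}\,\V^S_{\bs\la}$; hence $\V^S_{\bs\la}/\hat B_{21}\,\V^S_{\bs\la}\simeq\Ol/\langle\{b\}\rangle\simeq\Olo$ (Corollary \ref{cor on algebra II}), while $\V^S_{\bs\la}/\ker p^\V_{\bs\la}=\sing\,\V^{S,0}_{\bs\la}$ by construction. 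By Lemma \ref{lem on char of VS} and Lemma \ref{char OL} these two graded spaces have the same graded character, so the natural surjection $\Olo\twoheadrightarrow\sing\,\V^{S,0}_{\bs\la}$ induced by $p^\V_{\bs\la}$ is an isomorphism, which forces $\ker p^\V_{\bs\la}=\hat B_{21}\,\V^S_{\bs\la}$. Then $\mu^0_{\bs\la}$ is precisely the graded linear isomorphism induced by $\mu_{\bs\la}$ on these two quotients (with $v^0_{\bs\la}$ the image of $v_{\bs\la}$, up to a nonzero scalar), and Theorem \ref{first2} follows.

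Alternatively one can mimic the proof of Theorem \ref{first1} in the undeformed case: $\ker\mu^0_{\bs\la}$ is an ideal of $\Olo$ which, since $\sing\,\V^{S,0}_{\bs\la}$ is a free $\C[\sigma_1,\dots,\sigma_n]$-module on which $\tau^0_{\bs\la}(\pi^0_{\bs\la}(\sigma_s))$ acts by multiplication by $\sigma_s$ (Lemma \ref{symm OK} together with the diagram \Ref{2nd diagram}), contains no nonzero element of $\pi^0_{\bs\la}(\C[\sigma_1,\dots,\sigma_n])$; since $\Olo=\C[\{f,g\}]$ is a domain finite over that polynomial subalgebra (the Wronski map being finite, cf. \cite{EG}, \cite{MTV3}), such an ideal must vanish, which gives injectivity, and equality of graded characters then gives surjectivity. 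In either route the genuine obstacle is the graded-character identity $\ch_{\sing\,\V^{S,0}_{\bs\la}}(q)=\ch_{\Olo}(q)$ --- equivalently, that $\ker p^\V_{\bs\la}$ is no larger than $\hat B_{21}\,\V^S_{\bs\la}$ --- which is the step that actually uses the numerology of Lemma \ref{weyl} and the Schubert-calculus input of \cite{MTV3}; the rest is bookkeeping with the diagrams of Section \ref{sec comparison}.
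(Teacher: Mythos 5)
Your argument is correct as a piece of internal bookkeeping, but it is a genuinely different route from the paper's: the paper gives no proof of Theorem \ref{first2} at all --- it is imported verbatim as Theorem 5.6 of \cite{MTV3} --- whereas you recover it from the nilpotent-deformation results by reducing modulo $\hat B_{21}$. Your main chain is sound: $\hat B_{21}\V^S_{\bs\la}\subseteq\ker p^\V_{\bs\la}$ follows from Lemma \ref{lem on p-la}(ii), Theorem \ref{first1} with \Ref{mutau} identifies $\V^S_{\bs\la}/\hat B_{21}\V^S_{\bs\la}$ with $\Ol/\langle\{b\}\rangle\simeq\Olo$ as graded spaces, and a graded surjection between spaces with equal, finite-dimensional graded components is an isomorphism, which pins down $\ker p^\V_{\bs\la}$ and exhibits $\mu^0_{\bs\la}$ (with $v^0_{\bs\la}$ proportional to $p^\V_{\bs\la}(v_{\bs\la})$, necessarily nonzero in the one-dimensional degree-$(2k-n)$ component) as the induced map; the intertwining identity is indeed automatic from $\tau^0_{\bs\la}$ being an algebra homomorphism compatible with \Ref{2nd diagram}. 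Two caveats. First, the character identity you invoke is not literally Lemma \ref{lem on char of VS}: that lemma gives $\ch_{\V^{S,0}_{\bs\la}}$, and you must divide by the $\slt$-string factor $q^{2k-n}(1-q^{n-2k+1})/(1-q)$ coming from the decomposition of each isotypical component into $e_{21}$-strings over $\sing\V^{S,0}_{\bs\la}$ before comparing with $\ch_{\Olo}$ in Lemma \ref{char OL}; this is routine but should be said. Second, and more substantively, what your derivation buys is a consistency statement --- the $\bs K=0$ picture of \cite{MTV3} sits inside the nilpotent picture as the quotient by $\langle\hat B_{21}\rangle$ --- rather than an independent proof of the cited theorem: Theorems \ref{first} and \ref{first1} themselves rest on inputs quoted from \cite{MTV3} (the generic eigenvector statement behind Lemma \ref{lem on pairs of polynomials and leaves}, and the character \Ref{char B^0_{la}} used in Lemma \ref{lem on ker p}), so presenting this as a proof of Theorem 5.6 of \cite{MTV3} would risk circularity at the level of the literature, and it should be framed as a re-derivation within the present paper's framework.
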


Consider the linear map
\bea
\bar \mu^0_{\bs\la}\ :\
\A_d\otimes \O^0_{\bs\la}\ \to\ \V^{S,0}_{\bs\la}\ ,
\qquad
b^j\otimes F\ \mapsto\ (e_{21})^j\tau^0_{\bs\la}(F)\,v^0_{\bs\la}\,,
\eea
and the algebra isomorphism
\bea
{\rm id}\otimes \tau^0_{\bs\la}\ : \
\A_d\otimes \O^0_{\bs\la}\ \to \
\A_d\otimes \B^0_{\bs\la}\ .
\eea

\begin{cor}
\label{cor on reg repn 0}
The map\ $\bar \mu^0_{\bs\la}$ is an isomorphism of graded vector
spaces. The maps\/ ${\rm id}\otimes \tau^0_{\bs\la}$ and\/ $\bar
\mu^0_{\bs\la}$ intertwine the action of multiplication operators on
$\A_d\otimes \O^0_{\bs\la}$ and the action of the algebra
$\A_d\otimes\B^0_{\bs\la}$ on $\V^S_{\bs\la}$, that is, for any
$F,G\in\O^0_{\bs\la}$ and $i,j\geq 0$, we have
\vvn.3>
\be
\bar \mu^0_{\bs\la}(b^{i+j}\otimes FG)
\ =
({\rm id}\otimes
\tau^0_{\bs\la})(b^i\otimes F)\,\bar \mu^0_{\bs\la}(b^j\otimes G)\,.
\vv.3>
\ee
In other words, the maps\/ ${\rm id}\otimes \tau^0_{\bs\la}$ and\/
$\bar \mu^0_{\bs\la}$ give
an isomorphism of
the regular representation of\/ $\A_d\otimes \O^0_{\bs\la}$ and
the\/ $\A_d\otimes \B^0_{\bs\la}$-module $\V^{S,0}_{\bs\la}$
defined in Section \ref{Algebra Adotimes Bbsla}.
\end{cor}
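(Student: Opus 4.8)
The plan is to deduce Corollary \ref{cor on reg repn 0} directly from Theorem \ref{first2} by keeping track of the $\glt$-module structure of $\V^{S,0}_{\bs\la}$. The starting point is the purely representation-theoretic fact that, for $\bs\la=(n-k,k)$ and $d=n-2k$, the irreducible $\glt$-module $L_{\bs\la}$ is the $(d+1)$-dimensional string spanned by $w,e_{21}w,\dots,e_{21}^dw$ for a highest weight vector $w$, with $e_{21}^{d+1}w=0$. Combined with the isotypical decomposition $\V^{S,0}_{\bs\la}\simeq L_{\bs\la}\otimes\sing\V^{S,0}_{\bs\la}$ recalled in Section \ref{alg sec} (the second factor being the multiplicity space, on which $\glt$ acts trivially and which is identified with the space of highest weight vectors in $\V^{S,0}_{\bs\la}$), this yields a direct sum decomposition
\be
\V^{S,0}_{\bs\la}\ =\ \bigoplus_{j=0}^d\,e_{21}^j\bigl(\sing\V^{S,0}_{\bs\la}\bigr)\ ,
\ee
in which each map $e_{21}^j$, $0\le j\le d$, is injective on $\sing\V^{S,0}_{\bs\la}$, and $e_{21}^{d+1}$ vanishes on it.

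Next I would note that $\{1,b,\dots,b^d\}$ is a $\C$-basis of $\A_d$, so $\A_d\otimes\Olo=\bigoplus_{j=0}^d b^j\otimes\Olo$, and that under this splitting $\bar\mu^0_{\bs\la}$ is the direct sum of the maps $b^j\otimes F\mapsto e_{21}^j\mu^0_{\bs\la}(F)$. By Theorem \ref{first2} the map $\mu^0_{\bs\la}\colon\Olo\to\sing\V^{S,0}_{\bs\la}$ is a graded linear isomorphism, and each $e_{21}^j$ with $0\le j\le d$ is injective on its image; hence each summand is an isomorphism onto $e_{21}^j(\sing\V^{S,0}_{\bs\la})$, and the displayed decomposition shows that $\bar\mu^0_{\bs\la}$ is a linear isomorphism onto $\V^{S,0}_{\bs\la}$. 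It is moreover graded: since $\deg e_{21}=-1=\deg b$, the factor $e_{21}^j$ exactly absorbs the summand $-j$ in $\deg(b^j\otimes F)=-j+\deg F$, so $\bar\mu^0_{\bs\la}$ is homogeneous with the same degree shift as $\mu^0_{\bs\la}$.

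Finally, the intertwining identity is obtained by expanding both sides. The left side is $e_{21}^{i+j}\tau^0_{\bs\la}(FG)\,v^0_{\bs\la}$, and the right side is $e_{21}^i\tau^0_{\bs\la}(F)\,e_{21}^j\tau^0_{\bs\la}(G)\,v^0_{\bs\la}$, using that $b^i\otimes\tau^0_{\bs\la}(F)$ acts on $\V^{S,0}_{\bs\la}$ by $v\mapsto e_{21}^i\tau^0_{\bs\la}(F)v$. Since $\B^0$ commutes with $U(\glt)$ by Theorem \ref{T-thm}, the operator $\tau^0_{\bs\la}(F)$ commutes with $e_{21}^j$; and since $\tau^0_{\bs\la}$ is an algebra homomorphism (Corollary \ref{lem on isom tau0}), the right side collapses to $e_{21}^{i+j}\tau^0_{\bs\la}(FG)\,v^0_{\bs\la}$, which is the left side. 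When $i+j>d$ both sides vanish, because then $b^{i+j}=0$ in $\A_d$ while $e_{21}^{i+j}=0$ on $\V^{S,0}_{\bs\la}$. Given Theorem \ref{first2}, there is no serious obstacle; the only steps needing care are the verification of the direct sum decomposition $\V^{S,0}_{\bs\la}=\bigoplus_{j=0}^d e_{21}^j(\sing\V^{S,0}_{\bs\la})$, the matching of gradings via $\deg e_{21}=\deg b=-1$, and the use of $[\B^0,U(\glt)]=0$ to move $e_{21}$ past $\tau^0_{\bs\la}(F)$.
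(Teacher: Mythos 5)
Your proposal is correct and follows exactly the route the paper intends: the corollary is stated without proof as an immediate consequence of Theorem \ref{first2}, via the decomposition $\V^{S,0}_{\bs\la}\simeq L_{\bs\la}\otimes\sing\V^{S,0}_{\bs\la}$ (equivalently $\bigoplus_{j=0}^d e_{21}^j(\sing\V^{S,0}_{\bs\la})$), the grading $\deg e_{21}=\deg b=-1$, and the commutation of $\B^0$ with $U(\glt)$ from Theorem \ref{T-thm}. You have simply written out the details the authors left implicit, including the correct treatment of the case $i+j>d$.
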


\subsection{Comparison of
$\A_d\otimes \B^0_{\bs\la}$-module $\V^{S,0}_{\bs\la}$
and $\B_{\bs\la}$-module $\V^S_{\bs\la}$}
\label{last subsec}

Define a linear map
\bea
\eta_{\bs\la}\ :\ \V^{S,0}_{\bs\la} \ \to\ \V^{S}_{\bs\la}
\eea
by the formula
\bea
(e_{21})^j B\,v^0_{\bs\la}\ \mapsto\
\nu_{\bs\la}(b^j\otimes B) v_{\bs\la}\
\eea
for any $j\geq 0$ and $B\in \B^0_{\bs\la}$.

\begin{thm}
\label{last thm}
The map $\eta_{\bs\la}$
is an isomorphism
of graded vector spaces. The maps\/
$\nu_{\bs\la} : \A_d\otimes \B^0_{\bs\la}\to \B_{\bs\la}$
and\/ $\eta_{\bs\la}$ intertwine the action of
$\A_d\otimes\B^0_{\bs\la}$ on $\V^{S,0}$
and the action of $\B_{\bs\la}$
on $\V^S_{\bs\la}$.
In other words, the maps\/
$\nu_{\bs\la}$
and\/ $\eta_{\bs\la}$
give
an isomorphism of the
$\A_d\otimes \B^0_{\bs\la}$-module $\V^{S,0}_{\bs\la}$ and
$\B_{\bs\la}$-module $\V^S_{\bs\la}$.
\end{thm}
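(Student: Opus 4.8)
The plan is to exhibit $\eta_{\bs\la}$ as a composition of isomorphisms that have already been produced, and then to read off every assertion of the theorem from the corresponding commutative diagrams. Recall the algebra isomorphism $q_{\bs\la}:\A_d\otimes\Olo\to\Ol$ of Corollary \ref{cor on algebra II}, the graded linear isomorphism $\bar\mu^0_{\bs\la}:\A_d\otimes\Olo\to\V^{S,0}_{\bs\la}$, $b^j\otimes F\mapsto(e_{21})^j\tau^0_{\bs\la}(F)v^0_{\bs\la}$, of Corollary \ref{cor on reg repn 0}, and the graded linear isomorphism $\mu_{\bs\la}:\Ol\to\Vl$, $F\mapsto\tau_{\bs\la}(F)v_{\bs\la}$, of Theorem \ref{first1}. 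First I would set $\eta_{\bs\la}:=\mu_{\bs\la}\circ q_{\bs\la}\circ(\bar\mu^0_{\bs\la})^{-1}$; since $q_{\bs\la}$ is graded (Section \ref{sec Ol}), this is automatically a graded linear isomorphism $\V^{S,0}_{\bs\la}\to\Vl$, which already gives the first claim of the theorem.

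Next I would check that this composition agrees with the formula in the statement, which is essentially a matter of unwinding definitions. Given $B\in\B^0_{\bs\la}$, put $F=(\tau^0_{\bs\la})^{-1}(B)\in\Olo$; then $(e_{21})^j Bv^0_{\bs\la}=(e_{21})^j\tau^0_{\bs\la}(F)v^0_{\bs\la}=\bar\mu^0_{\bs\la}(b^j\otimes F)$, so $(\bar\mu^0_{\bs\la})^{-1}\bigl((e_{21})^j Bv^0_{\bs\la}\bigr)=b^j\otimes F$. Applying $q_{\bs\la}$ and then $\mu_{\bs\la}$ and using $\nu_{\bs\la}=\tau_{\bs\la}\,q_{\bs\la}\,({\rm id}\otimes(\tau^0_{\bs\la})^{-1})$ from Theorem \ref{thm on 3d diagram}, one gets
\[
\eta_{\bs\la}\bigl((e_{21})^j Bv^0_{\bs\la}\bigr)\;=\;\tau_{\bs\la}\bigl(q_{\bs\la}(b^j\otimes F)\bigr)v_{\bs\la}\;=\;\nu_{\bs\la}(b^j\otimes B)\,v_{\bs\la}\,,
\]
which is exactly the formula defining $\eta_{\bs\la}$. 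This simultaneously settles well-definedness: the vectors $(e_{21})^j Bv^0_{\bs\la}$ span $\V^{S,0}_{\bs\la}$ by surjectivity of $\bar\mu^0_{\bs\la}$, and on each of them the right-hand side of the defining formula equals the value of the genuine linear map $\mu_{\bs\la}\circ q_{\bs\la}\circ(\bar\mu^0_{\bs\la})^{-1}$.

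Finally, for the intertwining property I would chase the three identifications in turn. Corollary \ref{cor on reg repn 0} says $\bar\mu^0_{\bs\la}$, together with ${\rm id}\otimes\tau^0_{\bs\la}$, identifies the $\A_d\otimes\B^0_{\bs\la}$-module $\V^{S,0}_{\bs\la}$ with the regular representation of $\A_d\otimes\Olo$; the algebra isomorphism $q_{\bs\la}$ identifies the regular representation of $\A_d\otimes\Olo$ with that of $\Ol$; and Theorem \ref{first1} says $\mu_{\bs\la}$, together with $\tau_{\bs\la}$, identifies the regular representation of $\Ol$ with the $\B_{\bs\la}$-module $\Vl$. Composing these, and once more invoking $\nu_{\bs\la}=\tau_{\bs\la}\,q_{\bs\la}\,({\rm id}\otimes(\tau^0_{\bs\la})^{-1})$, one concludes $\eta_{\bs\la}(a\,w)=\nu_{\bs\la}(a)\,\eta_{\bs\la}(w)$ for all $a\in\A_d\otimes\B^0_{\bs\la}$ and $w\in\V^{S,0}_{\bs\la}$, i.e.\ $\nu_{\bs\la}$ and $\eta_{\bs\la}$ give the asserted isomorphism of the $\A_d\otimes\B^0_{\bs\la}$-module $\V^{S,0}_{\bs\la}$ with the $\B_{\bs\la}$-module $\Vl$. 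The only genuinely error-prone point is this last step: keeping straight, along each arrow of the composite, which module structure is being transported, and in particular checking that the action $b^j\otimes B:v\mapsto(e_{21})^jBv$ of Section \ref{Algebra Adotimes Bbsla} is precisely the one obtained by transport of structure through $\bar\mu^0_{\bs\la}$. Beyond this bookkeeping there is no real obstacle, since the theorem is a formal consequence of Theorems \ref{first1} and \ref{thm on 3d diagram} and Corollary \ref{cor on reg repn 0}.
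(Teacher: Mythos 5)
Your proposal is correct and follows exactly the route of the paper, whose proof consists of the single remark that the theorem is a direct corollary of Theorem \ref{first1}, Theorem \ref{thm on 3d diagram} and Corollary \ref{cor on reg repn 0}; you have merely made explicit the composite $\eta_{\bs\la}=\mu_{\bs\la}\circ q_{\bs\la}\circ(\bar\mu^0_{\bs\la})^{-1}$ and the transport of module structures that the authors leave to the reader. The verification that this composite agrees with the defining formula via $\nu_{\bs\la}=\tau_{\bs\la}\,q_{\bs\la}\,({\rm id}\otimes(\tau^0_{\bs\la})^{-1})$ is exactly the intended bookkeeping, so nothing is missing.
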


The theorem is a direct corollary of Theorems
\ref{first1}, \ref{thm on 3d diagram} and Corollary
\ref{cor on reg repn 0}.

\bigskip

\end{document}